\newcommand{\be}{\begin{eqnarray}}
\newcommand{\ee}{\end{eqnarray}}
\newcommand{\by}{\begin{eqnarray*}}
\newcommand{\ey}{\end{eqnarray*}}
\newcommand{\bn}{\begin{enumerate}}
\newcommand{\en}{\end{enumerate}}
\newcommand{\bi}{\begin{itemize}}
\newcommand{\ei}{\end{itemize}}
\newtheorem{theorem}{Theorem}[section]
\newtheorem{lemma}{Lemma}[section]
\newtheorem{corollary}{Corollary}[section]
\newtheorem{remark}{Remark}[section]
\newtheorem{defin}{Definition}[section]
\newtheorem{proposition}{Proposition}[section]
\newtheorem{prop}{Proposition}[section]
\newtheorem{example}{Example}[section]
\newtheorem{cor}{Corollary}[section]
\newcommand \bet {\beta}
\newcommand \la {\lambda}
\newcommand \tet {\theta}
\newcommand \sig {\sigma}
\newcommand \kap {\kappa}
\newcommand \alp {\alpha}
\newcommand \gam {\gamma}
\newcommand \eps {\epsilon}
\newcommand \del {\delta}
\newcommand \ome {\omega}
\newcommand \R {\mathbb{R}}
\newcommand \N {\mathbb{N}}
\newcommand \E {\mathbb{E}}
\newcommand \hX {\widehat{X}}
\newcommand \hRj {\widetilde{R}_J}
\newcommand \hRjn {\hRj^{(n)}}
\newcommand \hR {\widetilde{R}}
\newcommand \rhoj {\rho_J}
\newcommand \rhod {\rho_D}
\newcommand \whR {\widehat{R}}
\newcommand \whRn {\whR_n}
\newcommand \mL {\mathcal{L}}
\newcommand \mR {\mathcal{R}}
\newcommand \mF {\mathcal{F}}
\newcommand \fR {\mathfrak{R}}
\newcommand{\mC}{\mathcal{C}}
\newcommand \opsi {\overline{\psi}}
\newcommand \upsi {\underline{\psi}}
\newcommand \lan {\la_n}
\newcommand \kapn {\kap_n}
\newcommand \Fn {F_n}
\newcommand \Rn {R_n}
\newcommand \tetn {\tet_n}
\newcommand \cn {c_n}
\newcommand \Yn {Y_n}
\newcommand \sqrtn {\sqrt{n}}
\newcommand \psin {\psi_n}
\newcommand \mO {\mathcal{O}}
\newcommand \rhojn {\rhoj^{(n)}}
\newcommand \rhodn {\rhod^{(n)}}
\newcommand \Zd {Z_d}
\newcommand \elln {\ell_n}
\newcommand \ynx {y_n(x)}
\newcommand \Sm {S_m}
\newcommand \Smn {S_{m, n}}
\newcommand \xmn {x_{m, n}}
\newcommand \ymn {y_{m, n}}
\newcommand \qm {q_{m}}
\newcommand \Amn {A_{m,n}}
\newcommand \Bmn {B_{m,n}}
\newcommand \vs {v^*}
\renewcommand{\theequation}{\arabic{section}.\arabic{equation}}
\numberwithin{equation}{section}
\begin{document}
\date{\today}
\title{Optimal Reinsurance under the Mean-Variance Premium Principle to Minimize the Probability of Ruin}
%: \break CORRECTED PROOF OF COMPARISON THEOREM A.3}

\author{Xiaoqing Liang%
\thanks{Department of Statistics, School of Sciences, Hebei University of Technology, Tianjin 300401, P.\ R.\ China, liangxiaoqing115@hotmail.com. X.\ Liang thanks the National Natural Science Foundation of China (11701139, 11571189) and the Natural Science Foundation of Hebei Province (A2018202057) for financial support.}
\and Zhibin Liang
\thanks{School of Mathematical Sciences, Nanjing Normal University, Jiangsu 210023, P.\ R.\ China, liangzhibin111@hotmail.com. Z.\ Liang thanks the National Natural Science Foundation of China (11471165) for financial support.}
\and Virginia R. Young%
\thanks{Corresponding author. Department of Mathematics, University of Michigan, Ann Arbor, Michigan, 48109, vryoung@umich.edu. V.\ R.\ Young thanks the Cecil J. and Ethel M. Nesbitt Professorship of Actuarial Mathematics for financial support.}
}

 \maketitle

\begin{abstract}
We consider the problem of minimizing the probability of ruin by purchasing reinsurance whose premium is computed according to the mean-variance premium principle, a combination of the expected-value and variance premium principles.  We derive closed-form expressions of the optimal reinsurance strategy and the corresponding minimum probability of ruin under the diffusion approximation of the classical Cram\'er-Lundberg risk process perturbed by a diffusion.  We find an explicit expression for the reinsurance strategy that maximizes the adjustment coefficient for the classical risk process perturbed by a diffusion.  Also, for this risk process, we use stochastic Perron's method to prove that the minimum probability of ruin is the unique viscosity solution of its Hamilton-Jacobi-Bellman equation with appropriate boundary conditions.  Finally, we prove that, under an appropriate scaling of the classical risk process, the minimum probability of ruin converges to the minimum probability of ruin under the diffusion approximation.

\medskip

{\bf JEL Classification.} C61, D81, G22.

\medskip

{\bf AMS 2010 Subject Classification.}    93E20, 91B30, 47G20, 45J05, 90B20.

\medskip

{\bf  Keywords.}  Optimal reinsurance; probability of ruin; classical risk model; diffusion perturbation; diffusion approximation; asymptotic analysis.

\end{abstract}

\newpage{}

\section{Introduction}

\setcounter{equation}{0}
\renewcommand{\theequation}
{1.\arabic{equation}}

Reinsurance is an important management tool of insurance companies, due to the protection that reinsurance offers against potentially large losses.  Controlling reinsurance under different criteria, such as minimizing the probability of ruin and maximizing expected utility of terminal wealth, is a popular research topic in the actuarial literature.  Researchers use the tools of stochastic control, including the corresponding Hamilton-Jacobi-Bellman (HJB) equation, to analyze these problems.

We consider the problem of minimizing the probability of ruin by purchasing reinsurance whose premium is computed according to the mean-variance premium principle, a combination of the expected-value and variance premium principles.  Moreover, we do not constrain the form of the optimal reinsurance strategy.   Much of the reinsurance literature constrains the insurance company to buy either pure quota-share reinsurance, pure excess-of-loss reinsurance, or a combination of the two; see, for example, Zhang, Zhou, and Guo \cite{ZZG07}, Liang and Guo \cite{LG07} and \cite{LG08}, and Bai, Cai, and Zhou \cite{BCZ13}.

On the other hand, some researchers have found optimal reinsurance strategies for various optimization problems without restricting the form of the reinsurance.  For example, under the criteria of maximizing expected utility of terminal wealth and minimizing the probability of ruin, Zhang, Meng, and Zeng \cite{ZMZ16} investigated an optimal investment and reinsurance problem in which the insurer purchased a general reinsurance policy and reinsurance is priced according to the mean-variance premium principle, as in this paper.  Liang and Young \cite{LY18} computed the optimal investment and per-loss reinsurance strategies for an insurance company facing a compound Poisson claim process; they assumed that the reinsurer used an expected-value premium principle and showed that the optimal form of reinsurance is excess of loss.

Due to its historical importance in insurance economics and due to its mathematical simplicity, the expected-value principle is often used as a reinsurance premium principle; see, for example, Liang and Young \cite{LY18}, Han, Liang, and Zhang \cite{HLZ2019}, and the references therein.  Although the variance principle is another important premium principle, it is not often used in a dynamic setting; for exceptions, see Hipp and Taksar \cite{HT10}, Zhou and Yuen \cite{ZY12}, Liang and Yuen \cite{LY16}, and Han, Liang, and Yuen \cite{HLY18}.

The mean-variance premium principle combines the expected-value and variance premium principles; therefore, it is more general than either and includes each as a special case.  Under the mean-variance premium principle, Zhang, Meng, and Zeng \cite{ZMZ16} studied optimal investment and reinsurance problems, Chen, Yang, and Zeng \cite{CYZ18} studied a stochastic differential game between two insurers who invest in a financial market and adopt reinsurance to manage their claim risks, and Han, Liang, and Young \cite{HLY19} determined the optimal reinsurance strategy to minimize the probability of drawdown.

Throughout our paper, we consider the problem of minimizing the probability of ruin by purchasing reinsurance whose premium is computed according to the mean-variance premium principle.  We begin by explicitly solving the ruin-minimization problem for the diffusion approximation and by finding the optimal reinsurance to maximize the adjustment coefficient in the classical Cram\'er-Lundberg risk process perturbed by a diffusion.  Then, we relate the minimum probability of ruin under the perturbed classical risk process to the minimum probability of ruin under the corresponding diffusion approximation.  To relate these two probabilities, we scale the classical model by $n > 0$.  Specifically, we multiply the Poisson rate by $n$, divide the claim severity by $\sqrtn$, and adjust the premium rate so that net premium income remains constant.  Iglehart \cite{I1969} introduced the scaled system in queuing theory; Ba\" uerle \cite{BN2004}, Gerber, Shiu, and Smith \cite{GSS2008}, and Cohen and Young \cite{CY2019} used the scaled system to study questions of interest in insurance.

The remainder of the paper is organized as follows.  In Section \ref{sec:2}, we present the model and the ruin-minimization problem.  In Section \ref{sec:3}, under the mean-variance premium principle, we derive explicit expressions for the optimal reinsurance strategy and the corresponding minimum probability of ruin for the diffusion approximation.  We show that the optimal reinsurance strategy also maximizes the adjustment coefficient of the diffusion approximation.  Then, in Section \ref{sec:4}, we find an expression for the reinsurance strategy that maximizes the adjustment coefficient for the classical risk process perturbed by a diffusion, and we prove some interesting properties of that reinsurance strategy.  The end of Section \ref{sec:4} contains one of our main results, namely, that, under the scaling of the classical risk process, the minimum probability of ruin converges to the minimum probability of ruin under the diffusion approximation.  Finally, in Appendix \ref{app:A}, we show that the minimum probability of ruin in the classical risk model perturbed by a diffusion is the unique viscosity solution of its boundary-value problem, which consists of a Hamilton-Jacobi-Bellman equation and boundary conditions.

\section{Model and problem formulation}\label{sec:2}

\setcounter{equation}{0}
\renewcommand{\theequation}
{2.\arabic{equation}}

In this section, we describe the reinsurance market available to the insurance company, and we formulate the problem of minimizing the probability of ruin.  Assume that all random processes exist on the filtered probability space $\big( \Omega, \mF, \mathbb{F} = \{ \mF_t \}_{t \ge 0}, \mathbb{P} \big)$.

Consider an insurer whose surplus process without reinsurance is described by the classical Cram\'{e}r-Lundberg model that is additionally perturbed by a Brownian motion; see Dufresne and Gerber \cite{DG1991} for early work with this model:
\begin{equation}\label{eq:sp}
X^0_t = x + ct - \sum_{i=1}^{N_t} Y_i + \bet W_t,
\end{equation}
in which $X^0_0 = x \ge 0$ is the initial surplus, $c$ is the premium rate,  $\{N_t \}_{t \ge 0}$ is a homogeneous Poisson process with intensity $\la > 0$, $Y_i$ represents the size of the $i$th claim, and the claim sizes $Y_1, Y_2, \ldots$ are independent and identically distributed, positive random variables, independent of $\{N_t\}$. Here, we assume that $F_Y(y)$, the common cumulative distribution function of $\{Y_i\}_{i \in \N}$, is such that $F_Y(0) = 0$,  and $0 < F_Y(y) < 1$ for $y > 0$, that is, each $Y_i$ has full support on $\R^+$.  We assume that the moment generating function of $Y$ exists in a neighborhood of $0$.  Denote the first-order moment by $\E(Y_i) = \mu$ and the second-order moment by $\E\big(Y^2_i\big) = \sig^2$.  We also assume that the premium rate $c$ satisfies $c > \la \mu$.

The term $\sum_{i=1}^{N_t}Y_i$ follows a compound Poisson process (CPP), which represents the aggregate claims up to time $t$. Moreover, $\bet > 0$ is a constant, and $\{W_t\}_{t \ge 0}$ is a standard Brownian motion, independent of the claim number process $\{ N_t\}$ and of the claim severity process $\{Y_i\}_{i \in \N}$. The diffusion term $\bet W_t$ represents the additional uncertainty associated with the insurance market or the economic environment.  This additional uncertainty is not necessarily related to the claims, and we assume that $\bet W_t$ is not reinsurable.

 We assume that the insurer can buy per-loss reinsurance, with a continuously payable premium computed according to the so-called \textit{mean-variance} premium principle, which combines the expected-value and variance premium principles, with risk loadings $\tet$ and $\eta$, respectively.  Specifically, if $R_t(\omega, y)$ represents the retained claim at time $t \ge 0$, as a function of the (possible) claim $Y = y$ at that time and state of the world $\omega \in \Omega$, then reinsurance indemnifies the insurer by the amount $y - R_t(\omega, y)$ if there is a claim $y$ at time $t \ge 0$ and $\omega \in \Omega$, and the time-$t$ premium rate is given by
\begin{equation}\label{eq:MVpp}
 c(R_t) = (1 + \tet) \la \E \big(Y - R_t \big) + \frac{\eta}{2} \, \la \E \big( (Y - R_t)^2 \big).
\end{equation}
We assume that the premium income $c$ is not sufficient to buy full reinsurance, that is, \[
c < (1 + \tet) \la \mu + \frac{\eta}{2} \, \la \sig^2;
\]
otherwise, the insurer would be able to avoid ruin, and the problem of minimizing the probability of ruin would be trivial.  Note that, if $\tet = 0$, the reinsurance premium in \eqref{eq:MVpp} reduces to the variance premium principle; and, if $\eta = 0$, the reinsurance premium reduces to the expected-value principle.

 A retention strategy $\mR = \{ R_t \}_{t \ge 0}$ is {\it admissible} if (i) for a fixed value of $y \ge 0$, the mapping $(t, \omega) \mapsto R_t(\omega, y)$ is $\mathbb{F}$-predictable, (ii) for a fixed pair $(t, \omega)$, $y \mapsto R_t(\omega, y)$ is $\mathcal{B}(\R^+)$-measurable, in which $\mathcal{B}(\R^+)$ denotes the Borel $\sigma$-algebra on $\R^+$, (iii) $0 \le R_t(\omega, y) \le y$, for all $t \ge 0$ and $\omega \in \Omega$, and (iv) the net premium of the controlled surplus is greater than the expected rate of claim payment, that is,
 \begin{equation}\label{eq:drift_condition}
 c - c(R_t) > \la \E R_t,
 \end{equation}
with probability one, for all $t \ge 0$.  Hald and Schmidli \cite{HS04} and Liang and Guo \cite{LG07}, among others, refer to inequality \eqref{eq:drift_condition} as the {\it net-profit condition}.  Denote the set of admissible strategies by $\fR$.\footnote{In the following, we will omit the dependence of $R_t$ on $\omega$ and $y$.  That said, given a Borel-measurable {\it function} $R = R(y)$ such that $0 \le R(y) \le y$, we define a time-homogeneous process $\mR$ by $R_t(\omega, y) = R(y)$. For such a retention function, we will often emphasize its dependence on the possible claim size $Y = y$.}  The insurer's surplus under an admissible retention strategy $\mR$ follows the dynamics
\begin{align}\label{eq:X}
dX_t& = \big(c - c(R_t) \big)dt - R_t dN_t + \bet dW_t \nonumber \\
&= \Big[- \kap + (1 + \tet) \la \E R_t + \la \eta \E (YR_t) - \frac{\eta}{2} \, \la \E \big(R_t^2 \big) \Big] dt - R_t dN_t + \bet dW_t,
\end{align}
in which $\kap$ is the positive constant defined by
\begin{equation}\label{eq:kap}
\kap = (1 + \tet)\la \mu + \frac{\eta}{2} \, \la \sig^2 - c.
\end{equation}

\begin{example}
Note that $\fR$ is non-empty because $\mR = \{ R_t \}$ such that $R_t(y) = y$ for all $y \ge 0$ is in $\fR$.  More generally, any quota-share reinsurance of the form $R_t(y) = q_t y$ with $q_t \in (q_1, 1]$ for all $t \ge 0$ and $y \ge 0$, such that
\[
q_1 = \dfrac{1}{\eta \sig^2} \left[ \big( \eta \sig^2 + \tet \mu \big) - \sqrt{\big( \eta \sig^2 + \tet \mu \big)^2 - 2 \eta \sig^2 \kap/\la} \; \right],
\]
is in $\fR$.  $($Recall that $\E Y = \mu$ and $\E \big(Y^2 \big) = \sig^2.)$  Furthermore, any stop-loss reinsurance of the form $R_t(y) = \min(M_t, y)$ with $M_t > M_1$ for all $t \ge 0$ and $y \ge 0$, such that $M_1 > 0$ uniquely solves
\[
c - \la \mu = \la \int_{M_1}^\infty \big( \tet + \eta(y - M_1) \big) S_Y(y) dy,
\]
is also in $\fR$.  \qed
\end{example}

Next, define the ruin time $\tau_0$ by
\begin{equation}\label{eq:tau}
\tau_0 = \inf\{ t\ge 0: X_t < 0\},
\end{equation}
which depends on the retention strategy $\{R_t\}$.  Note that because of the uncontrolled diffusion term in \eqref{eq:X}, $\tau_0$ also equals  $\inf\{ t\ge 0: X_t \le 0\}$.  Define the minimum probability of ruin by
\begin{equation}\label{eq:psi}
\psi(x) = \inf_{\mR \in \fR} \mathbb{P} \big(\tau_0 < \infty \mid X_0 = x \big).
\end{equation}

Before we tackle the optimization problem for the classical risk model perturbed by a diffusion in \eqref{eq:X}, we first solve this minimization problem by approximating the jump process in \eqref{eq:X} with a diffusion, which we obtain by matching the first two moments at all times $t \ge 0$, as in Grandell \cite{G91}. Specifically,
\begin{equation}\label{eq:approxR}
R_t dN_t \approx \la \E R_t dt - \sqrt{\la \E \big(R^2_t \big)} \, dB_t,
\end{equation}
in which $\{B_t\}_{t \ge 0}$ is a standard Brownian motion, independent of $\{W_t\}$.  By replacing $R_t dN_t$ in \eqref{eq:X} with the approximation in \eqref{eq:approxR}, we obtain the dynamics
\begin{equation}\label{eq:Xhat}
d \hX_t = \Big[- \kap + \tet \la \E R_t + \eta \la \E(Y R_t) - \frac{\eta}{2} \, \la \E \big(R_t^2 \big) \Big] dt + \sqrt{\la \E \big(R^2_t \big)} \, dB_t + \bet dW_t,
\end{equation}
that is, $\{\hX_t \}_{t \ge 0}$ follows a controlled diffusion.  One can show that \eqref{eq:drift_condition} is equivalent to strict positivity of the drift of the diffusion approximation.

Define the ruin time associated with $\{\hX_t\}$ by
\begin{equation}\label{eq:tauD}
\tau_D = \inf\{ t \ge 0: \hX_t < 0\},
\end{equation}
and define the corresponding minimum probability of ultimate ruin by
\begin{equation}\label{eq:psiD}
\psi_D(x) = \inf_{\mR \in \fR} \mathbb{P} \big(\tau_D < \infty \, \big| \, \hX_0 = x \big).
\end{equation}
In the next section, we find the optimal retention strategy to minimize the probability of ruin under the diffusion approximation.

\section{Diffusion approximation risk model}\label{sec:3}

\setcounter{equation}{0}
\renewcommand{\theequation}
{3.\arabic{equation}}

In this section, we solve the ruin minimization problem for the surplus process \eqref{eq:Xhat}.  By standard verification results (see, for example, Theorem 3.1 in Han, Liang, and Young \cite{HLY19}), if we find a classical solution of the following boundary-value problem (BVP) on $\R^+$, then the minimum probability of ruin $\psi_D$ equals that solution.
\begin{equation}\label{eq:hjbxhat}
\begin{cases}
- \kap v_x + \dfrac{1}{2} \, \bet^2 v_{xx} + \la \inf \limits_{R} \left\{ \left(\tet \E R + \eta \E(YR) - \dfrac{\eta}{2} \, \E \big(R^2 \big) \right) v_x +\dfrac{1}{2} \, \E \big(R^2\big) v_{xx} \right\} = 0, \\
v(0) = 1, \quad \lim \limits_{x \to \infty} v(x) = 0.
\end{cases}
\end{equation}
Because the coefficients in the HJB equation in \eqref{eq:hjbxhat} do not depend on the value of the surplus and time, we hypothesize that the optimal reinsurance strategy is state-independent and time-homogeneous.  That is, we assume that the optimal retention strategy $\{ R_t \}_{t \ge 0}$ is defined via a function $R = R(y)$ such that $R_t \equiv R$ for all $t \ge 0$.\footnote{Here, we slightly abuse notation by using $R_t$ on the left side to denote the value of the retention {\it strategy} at time $t$ and by using $R$ on the right side to denote the retention {\it function}.}

For a fixed retention function $R$, such that its corresponding state-independent and time-homogene-ous retention strategy is admissible, the solution of \eqref{eq:hjbxhat} (without the infimum over retention functions) equals the probability of ruin under the retention strategy determined by $R$; direct substitution into \eqref{eq:hjbxhat} without the infimum shows that the solution is given by
\begin{equation}\label{eq:adjcoeff}
\mathbb{P}\big(\tau_D < \infty \, \big| \, \hX_0 = x \big) = e^{- \rhod(R) \, x},
\end{equation}
in which $\rhod(R)$ equals
\begin{equation}\label{eq:rhoDR}
\rhod(R) = 2 \, \dfrac{- \kap + \la \left(\tet \E R + \eta \E(YR) - \dfrac{\eta}{2} \, \E \big(R^2 \big) \right)}{\la \E \big(R^2\big) + \bet^2} \, .
\end{equation}
The (positive) exponent $\rhod(R)$ is called the {\it adjustment coefficient}.\footnote{Note that the numerator in \eqref{eq:rhoDR} equals the drift of the diffusion approximation, which is strictly positive because of the condition in \eqref{eq:drift_condition}. }  Thus, minimizing the probability of ruin under a time-homogeneous and state-independent retention strategy is equivalent to finding a retention function $R$ to maximize $\rhod(R)$.

In the following lemma, we find the retention function $R_D$ that maximizes $\rhod(R)$.   Then, we show that the probability of ruin in \eqref{eq:adjcoeff} with $R = R_D$ solves the BVP in \eqref{eq:hjbxhat}, which implies that it equals the {\it minimum} probability of ruin $\psi_D$.

\begin{lemma}\label{lem:RstarD}
The retention function $R = R_D(y)$ that maximizes $\rhod(R)$ in \eqref{eq:rhoDR} is given by
\begin{equation}
\label{eq:RstarD}
R_D(y) = \dfrac{\tet + \eta y}{\alp^*} \wedge y,
\end{equation}
in which the constant $\alp^* > \eta$ uniquely solves
\begin{equation}
\label{eq:alpha}
\tet \E R + \eta \E(YR) - \frac{\alp}{2} \, \E \big( R^2 \big) = \frac{\bet^2(\alp - \eta) + 2 \kap}{2 \la} \, ,
\end{equation}
with $R = R_D$ given in \eqref{eq:RstarD}.  Moreover, the maximum value of $\rhod(R)$, denoted by $\rhod$, equals
\begin{equation}\label{eq:rhoD}
\rhod = \alp^* - \eta > 0.
\end{equation}
\end{lemma}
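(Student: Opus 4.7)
The plan is to exploit the fact that $\rhod(R) = \alp - \eta$ is equivalent to the identity
\[
\tet \E R + \eta \E(YR) - \frac{\alp}{2}\E(R^2) = \frac{\bet^2(\alp - \eta) + 2\kap}{2\la},
\]
obtained by cross-multiplying in \eqref{eq:rhoDR} and regrouping. Thus maximizing $\rhod(R)$ over admissible $R$ is equivalent to finding the largest $\alp$ such that some admissible $R$ satisfies this identity, i.e., such that the left side can be made at least as large as the right side.

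For fixed $\alp > 0$, I would first \emph{pointwise} maximize the integrand on the left. For each $y \ge 0$, the map $R \mapsto \tet R + \eta y R - \tfrac{\alp}{2} R^2$ is a downward parabola on $[0,y]$ with unconstrained maximizer $(\tet + \eta y)/\alp$; hence its constrained maximizer is exactly $(\tet + \eta y)/\alp \wedge y$. This produces a natural candidate family $R_\alp(y) = (\tet + \eta y)/\alp \wedge y$ and motivates the form of $R_D$ in \eqref{eq:RstarD}.

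Let
\[
f(\alp) = \tet \E R_\alp + \eta \E(Y R_\alp) - \frac{\alp}{2}\E(R_\alp^2), \qquad g(\alp) = \frac{\bet^2(\alp - \eta) + 2\kap}{2\la}.
\]
I would then verify existence and uniqueness of $\alp^* > \eta$ with $f(\alp^*) = g(\alp^*)$ by monotonicity and the intermediate value theorem: $g$ is strictly increasing because $\bet^2 > 0$, and an envelope-theorem calculation gives $f'(\alp) = -\tfrac{1}{2}\E(R_\alp^2) < 0$, since $R_\alp$ is pointwise optimal. At $\alp = \eta$, one checks $R_\eta \equiv Y$, so $f(\eta) - g(\eta)$ reduces, after substituting $\kap$ from \eqref{eq:kap}, to $c/\la - \mu > 0$ by the cheap-reinsurance assumption $c > \la\mu$. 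As $\alp \to \infty$, $R_\alp \to 0$ pointwise, so $f(\alp) \to 0$ while $g(\alp) \to \infty$. Monotonicity then gives a unique crossing $\alp^* > \eta$.

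Finally, I would close the argument as follows. For any admissible $R$, set $\alp(R) = \rhod(R) + \eta$ (strictly greater than $\eta$ by the net-profit condition), so $R$ satisfies the displayed identity at $\alp = \alp(R)$, giving $g(\alp(R)) \le f(\alp(R))$ by the pointwise optimality of $R_{\alp(R)}$. If $\alp(R) > \alp^*$, then $g(\alp(R)) > g(\alp^*) = f(\alp^*) > f(\alp(R))$, a contradiction; hence $\rhod(R) \le \alp^* - \eta$. Plugging $R_D = R_{\alp^*}$ into \eqref{eq:rhoDR} and using the defining equation \eqref{eq:alpha} shows that equality is attained, so $\rhod = \alp^* - \eta$. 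The main subtlety is the interplay between the pointwise maximization (which depends on $\alp$) and the global maximization of $\rhod$; handling this cleanly via the monotonicity of $f$ and $g$ and the chain of inequalities above is the key technical step, while the IVT and envelope-theorem verifications are routine.
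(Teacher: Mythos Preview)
Your argument is correct and reaches the same conclusion as the paper, but the organization is genuinely different. The paper first fixes $\E(R^2)=s^2$, uses a Lagrangian (with multiplier $\alp$) to maximize $\tet\E R+\eta\E(YR)$ subject to that constraint, establishes a bijection $s^2\leftrightarrow\alp$, and then differentiates the \emph{ratio} $\rhod(R)$ as a one-variable function of $\alp$ to locate the critical point. You instead start from the algebraic equivalence $\rhod(R)=\alp-\eta\iff$ \eqref{eq:alpha}, maximize the left side of \eqref{eq:alpha} pointwise for each $\alp$ to define $f(\alp)$, and compare it to the affine right side $g(\alp)$. Your route avoids both the $s^2\leftrightarrow\alp$ bijection and the quotient-rule computation: the envelope identity $f'(\alp)=-\tfrac12\E(R_\alp^2)$ and the monotonicity of $g$ give the unique crossing directly, and the final inequality chain $g(\alp(R))\le f(\alp(R))$ together with strict monotonicity forces $\alp(R)\le\alp^*$. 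Interestingly, the paper's auxiliary function $G(\alp)$ is exactly your $f(\alp)-g(\alp)$, and both proofs compute $G'(\alp)=-\tfrac12\E(R_\alp^2)-\bet^2/(2\la)$; so the two arguments share the same analytic core but yours packages it more economically. One small point worth making explicit: in your last paragraph you restrict to admissible $R$ (so that $\rhod(R)>0$), but retention functions violating the net-profit condition have $\rhod(R)\le 0<\alp^*-\eta$ anyway, so the bound extends trivially to all $R$ with $0\le R(y)\le y$.
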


\begin{proof}
To maximize $\rhod(R)$ in \eqref{eq:rhoDR}, first fix $\E \big( R^2 \big) = s^2 \in [0, \sig^2]$, in which $\E \big( Y^2 \big) = \sig^2$.  Then, maximizing $\rhod(R)$, subject to the restrictions $\E \big( R^2 \big) = s^2$ and $0 \le R(y) \le y$, is equivalent to maximizing $\tet \E R + \eta \E(YR)$, with $\E \big( R^2 \big) = s^2$ and $0 \le R(y) \le y$.  To that end, define the Lagrangian $\mL$ by
\[
\mL(R) = \tet \E R + \eta \E(YR) - \frac{\alp}{2} \left( \E \big( R^2 \big) - s^2 \right),
\]
in which $\alp \ge 0$ is the Lagrange multiplier.  By using the cumulative distribution function of $Y$, rewrite $\mL(R)$ as follows:
\[
\mL(R) = \int_0^\infty \left[ \tet R(y) + \eta y R(y) - \frac{\alp}{2} \, R^2(y) \right] dF_Y(y) + \frac{\alp}{2} \, s^2.
\]
From this integral representation of $\mL(R)$, we deduce that we can maximize $\mL(R)$ by maximizing the integrand $y$-by-$y$, subject to $0 \le R(y) \le y$.  As a function of $R(y)$, the integrand is a parabola, so it is maximized by
\[
R_D(y) = \dfrac{\tet + \eta y}{\alp} \wedge y.
\]

Next, we show that, given $s^2 \in [0, \sig^2]$, there exists a unique value of $\alp \ge \eta$ such that
\[
s^2 = \E \left( \left( \dfrac{\tet + \eta Y}{\alp} \right)^2 \wedge Y^2 \right),
\]
or equivalently,
\begin{equation}
\label{eq:s_al}
s^2 = 2 \int_0^{\frac{\tet}{\alp - \eta}} y S_Y(y) dy + \frac{2 \eta}{\alp^2} \int_{\frac{\tet}{\alp - \eta}}^\infty (\tet + \eta y) S_Y(y) dy,
\end{equation}
in which $S_Y = 1 - F_Y$.  It is straightforward to show that the right side of \eqref{eq:s_al} decreases from $\sig^2$ to $0$ as $\alp$ increases from $\eta$ to $\infty$.  It follows that \eqref{eq:s_al} has a unique solution $\alp \ge \eta$.

Thus, we have reduced the infinite-dimensional problem of finding a function $R(y)$ to maximize $\rhod(R)$ in \eqref{eq:rhoDR} to the one-dimensional problem of finding the optimal value of $s^2$, or equivalently, of finding the optimal value of $\alp \ge \eta$ because the above argument shows that there is a one-to-one correspondence between $s^2 \in [0, \sig^2]$ and $\alp \ge \eta$.  Thus, we define the function $f$, which we maximize with respect to $\alp$:
\begin{equation}
\label{eq:ratio2}
f(\alp) = \dfrac{- \kap + \la \left( \tet g_1(\alp) + \eta g_2(\alp) - \frac{\eta}{2} \, g_3(\alp) \right)}{\la g_3(\alp) + \bet^2} \, ,
\end{equation}
in which
\begin{equation}
\label{eq:g1}
g_1(\alp) =  \E R = \int_0^{\frac{\tet}{\alp - \eta}} S_Y(y) dy + \frac{\eta}{\alp} \int_{\frac{\tet}{\alp - \eta}}^\infty S_Y(y) dy,
\end{equation}
\begin{equation}
\label{eq:g2}
g_2(\alp) =  \E(YR) = 2 \int_0^{\frac{\tet}{\alp - \eta}} y S_Y(y) dy + \frac{1}{\alp} \int_{\frac{\tet}{\alp - \eta}}^\infty (\tet + 2 \eta y) S_Y(y) dy ,
\end{equation}
and
\begin{equation}
\label{eq:g3}
g_3(\alp) = \E \big( R^2 \big) =  2 \int_0^{\frac{\tet}{\alp - \eta}} y S_Y(y) dy + \frac{2 \eta}{\alp^2} \int_{\frac{\tet}{\alp - \eta}}^\infty (\tet + \eta y) S_Y(y) dy.
\end{equation}
By differentiating $f$ in \eqref{eq:ratio2} with respect to $\alp$, we obtain
\begin{align*}
\frac{\partial f}{\partial \alp} &\propto \left( \la g_3(\alp) + \bet^2 \right) \left(\tet g'_1(\alp) + \eta g'_2(\alp) - \dfrac{\eta}{2} \, g_3'(\alp) \right) - \left( - \kap + \la \left( \tet g_1(\alp) + \eta g_2(\alp) - \frac{\eta}{2} \, g_3(\alp) \right) \right) g'_3(\alp)  \\
&= \left( \la g_3(\alp) + \bet^2 \right) \dfrac{\alp - \eta}{2} \, g_3'(\alp) - \left( - \kap + \la \left( \tet g_1(\alp) + \eta g_2(\alp) - \frac{\eta}{2} \, g_3(\alp) \right) \right) g'_3(\alp)   \\
&\propto \tet g_1(\alp) + \eta g_2(\alp) - \frac{\alp}{2} \, g_3(\alp) - \frac{\bet^2(\alp - \eta) + 2 \kap}{2 \la} \, ,
\end{align*}
in which the second line follows from $\tet g'_1(\alp) + \eta g'_2(\alp) = \frac{\alp}{2} \, g'_3(\alp)$, and the third line follows from $g'_3(\alp) < 0$.   The symbol $\propto$ in the above expression means positively proportional to but not necessarily proportional up to a constant; this usage of $\propto$ is sufficient for our purposes because we are maximizing $f$ over possible values of $\alp$, so we only care about the sign of $\frac{\partial f}{\partial \alp}$.

Define $G$ by the third line above; specifically,
\[
G(\alp) = \tet g_1(\alp) + \eta g_2(\alp) - \frac{\alp}{2} \, g_3(\alp) - \frac{\bet^2(\alp - \eta) + 2 \kap}{2 \la} \, .
\]
We wish to show that $G$ has a unique zero $\alp^* > \eta$.  To that end, first, consider
\[
G(\eta) = \tet \mu + \eta \sig^2 - \frac{\eta}{2} \, \sig^2 - \frac{\bet^2(\eta - \eta) + 2 \kap}{2 \la} = \frac{c - \la \mu}{\la} \, ,
\]
in which we use the expression for $\kap$ in \eqref{eq:kap}.  Recall that we assume $c > \la \mu$; thus, $G(\eta) > 0$.  Next,
\[
\lim_{\alp \to \infty} G(\alp) = - \, \infty.
\]
Finally,
\begin{align*}
G'(\alp) =  \tet g'_1(\alp) + \eta g'_2(\alp) - \frac{1}{2} \, g_3(\alp) - \frac{\alp}{2} \, g'_3(\alp) - \dfrac{\bet^2}{2 \la} = - \, \frac{1}{2} \, g_3(\alp) - \frac{\bet^2}{2 \la} < 0.
\end{align*}
Thus, $G$ has a unique zero $\alp^* > \eta$, from which it follows that $f$ in \eqref{eq:ratio2} has a unique critical point $\alp^* > \eta$.

Furthermore,
\[
\frac{\partial f(\alp)}{\partial \alp} \bigg|_{\alp = \eta} > 0 \qquad \hbox{and} \qquad \lim_{\alp \to \infty} \frac{\partial f(\alp)}{\partial \alp} < 0,
\]
because $G(\eta) > 0$ and $\lim_{\alp \to \infty} G(\alp) < 0$, which implies that $f(\alp)$ is maximized at $\alp = \alp^*$.  Note that \eqref{eq:alpha} is a restatement of $G(\alp) = 0$, and \eqref{eq:rhoD} follows from \eqref{eq:rhoDR} and \eqref{eq:alpha}; thus, we have proved this lemma.
\end{proof}

In the following theorem, we prove that $e^{-\rhod \, x} = e^{- (\alp^* - \eta)x}$ equals the minimum probability of ruin $\psi_D$ defined in \eqref{eq:psiD}.

\begin{theorem}\label{thm:psiD}
Let the constant $\alp^* > \eta$ be the unique solution of
\begin{equation}\label{eq:alpha2}
c - \la \mu = (\alp - \eta) \left\{ \la \int_0^\infty \left( \dfrac{\tet + \eta y}{\alp} \wedge y \right) S_Y(y) dy + \dfrac{\bet^2}{2} \right\}.
\end{equation}
Then, the optimal retention strategy for the diffusion approximation risk model is time-homogeneous, state-independent, and admissible, and it is determined by the retention function
\begin{equation}\label{eq:RstarD2}
R_D(y) = \dfrac{\tet + \eta y}{\alp^*} \wedge y,
\end{equation}
and the minimum probability of ruin equals
\begin{equation}\label{eq:psiD2}
\psi_D(x) =  e^{- (\alp^* - \eta)x} = e^{-\rhod x},
\end{equation}
for $x \ge 0$.
\end{theorem}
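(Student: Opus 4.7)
The plan is to leverage Lemma 3.1 and the verification theorem cited in the paper. Since Lemma 3.1 already identifies $R_D$ as the maximizer of the adjustment coefficient $\rhod(R)$ and shows $\rhod = \alp^* - \eta$ where $\alp^*$ is characterized by \eqref{eq:alpha}, what remains is threefold: (i) show that equation \eqref{eq:alpha2} in the theorem is merely a rewriting of \eqref{eq:alpha}, so the two characterizations of $\alp^*$ coincide; (ii) check that the state-independent, time-homogeneous strategy generated by $R_D$ lies in $\fR$; and (iii) verify that the candidate value function $v(x) = e^{-(\alp^*-\eta)x}$ is a classical solution of the BVP \eqref{eq:hjbxhat}, from which the conclusion $\psi_D = v$ follows by standard verification.

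For (i), I would rewrite the left side of \eqref{eq:alpha}, namely $\tet g_1(\alp) + \eta g_2(\alp) - \tfrac{\alp}{2} g_3(\alp)$, using the explicit survival-function representations \eqref{eq:g1}--\eqref{eq:g3}, split at $a = \tet/(\alp-\eta)$. Collecting terms by $\int_0^a S_Y$, $\int_0^a y S_Y$, $\int_a^\infty S_Y$, and $\int_a^\infty y S_Y$, and then subtracting this expression from $\tet\mu + \tfrac{\eta}{2}\sig^2 = \tet\int_0^\infty S_Y\,dy + \eta\int_0^\infty y S_Y\,dy$, one finds after grouping and factoring $(\alp - \eta)/\alp$ that
\[
\tet\mu + \tfrac{\eta}{2}\sig^2 - \left[\tet g_1(\alp) + \eta g_2(\alp) - \tfrac{\alp}{2} g_3(\alp)\right] = (\alp - \eta) \int_0^\infty \left( \dfrac{\tet + \eta y}{\alp} \wedge y \right) S_Y(y) dy.
\]
Combined with the definition \eqref{eq:kap} of $\kap$, equation \eqref{eq:alpha} rearranges into \eqref{eq:alpha2}, and Lemma 3.1 already guarantees that this has a unique root $\alp^* > \eta$.

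For (ii), admissibility reduces to checking \eqref{eq:drift_condition} for the retention function $R = R_D$. Bounded measurability and $0 \le R_D(y) \le y$ are immediate from \eqref{eq:RstarD2}, while the net-profit condition $c - c(R_D) > \la\E R_D$ is equivalent to positivity of the drift of $\hX$ in \eqref{eq:Xhat}; that drift equals the numerator of $\rhod(R_D)$ in \eqref{eq:rhoDR}, and by Lemma 3.1 this numerator equals $\tfrac{1}{2}(\alp^* - \eta)[\la g_3(\alp^*) + \bet^2] > 0$.

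For (iii), substitute $v(x) = e^{-(\alp^* - \eta)x}$ directly into \eqref{eq:hjbxhat}. Since $v_{xx}/v_x = -(\alp^* - \eta)$, the infimum inside the HJB equation is, up to the positive factor $-v_x$, precisely the quantity $\rhod(R)/2$ reshuffled with $\alp = \alp^*$; the first-order analysis in Lemma 3.1 shows the infimum is attained at $R = R_D$ and equals $-\kap v_x + \tfrac{1}{2}\bet^2 v_{xx}$ (so that the PDE holds with equality), because at $\alp = \alp^*$ the function $G$ from Lemma 3.1 vanishes. The boundary conditions $v(0) = 1$ and $v(\infty) = 0$ are obvious. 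Applying the verification theorem referenced in the excerpt (Theorem 3.1 of Han--Liang--Young \cite{HLY19}) yields $\psi_D(x) = e^{-(\alp^* - \eta)x}$ and confirms that $R_D$ is optimal. The main technical obstacle is the integration-by-parts/collapse computation in (i); everything else is essentially bookkeeping built on Lemma 3.1.
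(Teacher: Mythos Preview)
Your proposal is correct and follows essentially the same route as the paper: equivalence of \eqref{eq:alpha} and \eqref{eq:alpha2}, admissibility via positivity of the drift $\tfrac{\alp^*-\eta}{2}\big(\la \E R_D^2 + \bet^2\big)$, and verification by plugging $e^{-(\alp^*-\eta)x}$ into the HJB. The only cosmetic difference is in step~(iii): the paper re-parametrizes the HJB infimum by $\tilde\alp$ and checks $h'(\tilde\alp)=g_3'(\tilde\alp)\,\tfrac{\alp^*-\tilde\alp}{2}$ to locate the minimizer at $\alp^*$, whereas you invoke directly that the resulting objective is (minus) the Lagrangian $\mathcal{L}(R)$ from Lemma~3.1 with multiplier $\alp^*$, whose pointwise optimizer is $R_D$; both arguments are equivalent.
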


\begin{proof}
Equation \eqref{eq:alpha2} that defines $\alp^*$ is the same as equation \eqref{eq:alpha}, after substituting for $\kap$, $\E R$, $\E (YR)$, and $\E \big(R^2 \big)$ from \eqref{eq:kap}, \eqref{eq:g1}, \eqref{eq:g2}, and \eqref{eq:g3}, respectively, and after simplifying the result.   Also, it is easy to see that the retention strategy defined by $R_D$ satisfies conditions (i), (ii), and (iii) of admissibility, so we need to show that it satisfies condition (iv), or equivalently,
\[
- \kap + \tet \la \E \big(R_D\big) + \eta \la \E \big(Y R_D \big) - \dfrac{\eta}{2} \, \la \E \big(R_D^2 \big) > 0.
\]
After using \eqref{eq:alpha} to substitute for the first three terms on the left side of this inequality, we obtain the following equivalent inequality:
\[
\dfrac{\alp^* - \eta}{2} \Big( \bet^2 + \la \E \big(R^2_D \big) \Big) > 0,
\]
which is true because $\alp^* > \eta$ and $\bet > 0$.

It remains for us to show that $\psi_D$ in \eqref{eq:psiD2} solves the BVP in \eqref{eq:hjbxhat} with minimizer $R_D$.  Clearly, $\psi_D$ satisfies the boundary conditions in \eqref{eq:hjbxhat}.  If we substitute $\psi_D(x) = e^{-(\alp^* - \eta)x}$ into the differential equation, we obtain
\[
\kap + \dfrac{\bet^2}{2} \left( \alp^* - \eta \right) + \la \inf_R \left\{ - \left(\tet \E R + \eta \E(YR) - \frac{\eta}{2} \, \E \big(R^2 \big) \right) +\frac{\alp^* - \eta}{2} \, \E \big(R^2\big) \right\} = 0.
\]
It is enough to show that $R_D$ minimizes
\[
\frac{\alp^* - \eta}{2} \, \E \big(R^2\big) - \left(\tet \E R + \eta \E(YR) - \frac{\eta}{2} \, \E \big(R^2 \big) \right).
\]
To that end, fix $\E \big(R^2 \big) = s^2 \in [0, \sig^2]$, as in the proof of Lemma \ref{lem:RstarD}; then, we obtain a minimizer $\tilde R$ of the same form as the maximizer in that lemma, that is,
\[
\tilde R(y) = \dfrac{\tet + \eta y}{\tilde \alp} \wedge y,
\]
for some $\tilde \alp > 0$.  Define the function $h$ by
\[
h(\tilde \alp) = \frac{\alp^* - \eta}{2} \, g_3(\tilde \alp) - \left(\tet g_1(\tilde \alp) + \eta g_2(\tilde \alp) - \frac{\eta}{2} \, g_3(\tilde \alp) \right).
\]
Then,
\[
h'(\tilde \alp) = g'_3(\tilde \alp) \, \dfrac{\alp^* - \tilde \alp}{2},
\]
which is negative for $\tilde \alp < \alp^*$ and positive for $\tilde \alp > \alp^*$.  (Recall that $g'_3(\tilde \alp) < 0$.)  Thus, the minimizer of $h$ equals $\alp^*$, and we are done.
\end{proof}

\begin{remark}
If we set $\bet = 0$, then Theorem {\rm \ref{thm:psiD}} is essentially a special case of Theorem $5.3$ of Zhang, Meng, and Zeng {\rm \cite{ZMZ16}}.  Besides controlling for reinsurance, Zhang, Meng, and Zeng {\rm \cite{ZMZ16}} also control investment in a risky financial market.  But, our proof differs from theirs; indeed, Zhang, Meng, and Zeng {\rm \cite{ZMZ16}} obtain the optimal reinsurance strategy via a probabilistic argument, and we obtain the optimal reinsurance strategy by solving a problem from calculus of variations, namely, maximizing the adjustment coefficient in \eqref{eq:rhoDR}.  \qed
\end{remark}

\begin{remark}
Note that maximizing the adjustment coefficient in \eqref{eq:rhoDR} is equivalent to maximizing the drift divided by the square of the volatility.  Pestien and Sudderth {\rm \cite{PS85}} show that the optimal strategy maximizing the drift divided by the square of the volatility also minimizes the probability of ruin, and we have confirmed that result for the model in this section. \qed
\end{remark}

In the first corollary of Theorem \ref{thm:psiD}, we observe a desirable property of $R_D(y)$ and $y - R_D(y)$.

\begin{corollary}\label{cor:comono}
Because $\alp^* > \eta$, $R_D$ in \eqref{eq:RstarD2} and $y - R_D$ are non-decreasing functions of $y$.  \qed
\end{corollary}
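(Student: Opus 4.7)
The plan is to exploit the explicit piecewise structure of $R_D(y) = \frac{\tet + \eta y}{\alp^*} \wedge y$ by identifying the unique breakpoint at which the two branches of the minimum cross, and then to check directly that both $R_D$ and $y - R_D$ are non-decreasing on each side of the breakpoint (and continuous across it).

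First I would locate the crossing point: setting $\frac{\tet + \eta y}{\alp^*} = y$ and solving for $y$ gives $y_0 = \frac{\tet}{\alp^* - \eta}$, which is a well-defined positive number precisely because $\alp^* > \eta$. For $y \le y_0$ one has $\frac{\tet + \eta y}{\alp^*} \ge y$, so $R_D(y) = y$; for $y \ge y_0$, the reverse inequality yields $R_D(y) = \frac{\tet + \eta y}{\alp^*}$. Writing these two cases out, $R_D$ has slope $1$ on $[0, y_0]$ and slope $\eta / \alp^* \in (0, 1)$ on $[y_0, \infty)$, and it is continuous at $y_0$, so it is non-decreasing on $\R^+$.

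For the ceded loss, the same decomposition gives $y - R_D(y) = 0$ for $y \le y_0$ and $y - R_D(y) = \frac{(\alp^* - \eta)y - \tet}{\alp^*}$ for $y \ge y_0$. The second expression is linear in $y$ with slope $\frac{\alp^* - \eta}{\alp^*} > 0$ (again using $\alp^* > \eta$), and it vanishes at $y = y_0$, matching the first piece. Hence $y - R_D(y)$ is also continuous and non-decreasing.

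There is no real obstacle here: the entire content of the corollary is the sign condition $\alp^* > \eta$ guaranteed by Lemma \ref{lem:RstarD}, which ensures both that the breakpoint $y_0$ is finite and positive and that the slope $\frac{\eta}{\alp^*}$ of the ``upper'' branch is strictly less than $1$, so neither $R_D$ nor its complement is ever decreasing. A one-line alternative would be to note that $R_D$ is the pointwise minimum of two non-decreasing functions, and that the complement $y - R_D(y) = \bigl(y - \frac{\tet + \eta y}{\alp^*}\bigr)^+ = \Bigl(\frac{(\alp^*-\eta)y - \tet}{\alp^*}\Bigr)^+$ is the positive part of a strictly increasing affine function, hence both are non-decreasing.
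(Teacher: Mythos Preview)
Your argument is correct and is exactly the elementary verification the paper has in mind: the paper gives no separate proof (the \qed is placed directly after the statement), treating the claim as an immediate consequence of $\alp^* > \eta$, and your piecewise analysis simply makes that immediate consequence explicit. One tiny refinement: the slope $\eta/\alp^*$ of the upper branch lies in $[0,1)$ rather than $(0,1)$ since $\eta = 0$ is allowed (cf.\ Corollary~\ref{cor:eta0D}), but this does not affect the monotonicity conclusion.
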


\begin{remark}\label{rem:comono}
Corollary {\rm \ref{cor:comono}} implies that $R_D(Y)$ and $Y - R_D(Y)$ are {\rm comonotonic} random variables.  That both $R_D$ and $y - R_D(y)$ are non-decreasing with respect to $y$ helps prevent moral hazard.  Indeed, if $R_D(y)$ were decreasing with respect to $y$, then the insurer would have an incentive to {\rm create} additional loss to thereby reduce its retention.  Similarly, if $y - R_D(y)$ were decreasing with respect to $y$, then the insurer would have an incentive to {\rm hide} a portion of its loss to thereby increase its reimbursement or indemnity.   \qed
\end{remark}

In the next two corollaries, we consider the special cases of $\eta = 0$ and $\tet = 0$, respectively.  The proofs are straightforward applications of Theorem \ref{thm:psiD}, so we omit them.  If $\eta = 0$, then the assumptions concerning the premium income rate $c$ become $\la \mu < c < (1 + \tet) \la \mu$.  In this case, the mean-variance premium principle reduces to the expected-value premium principle, and the optimal reinsurance strategy is excess-of-loss insurance with a constant deductible.\footnote{When $\bet = 0$ in our model, one can show that the deductible that solves \eqref{eq:alpha_eta0} in Corollary \ref{cor:eta0D} equals the optimal deductible in Zhou and Cai \cite{ZC2014} when $\del(x) = 0$ in their model.  Specifically, equation \eqref{eq:alpha_eta0} is equivalent to $g(x, m) = 0$ on page 426 of Zhou and Cai \cite{ZC2014}.}

\begin{corollary}\label{cor:eta0D}
If $\eta = 0$, then the optimal retention strategy to minimize the probability of ruin is determined by
\begin{equation}\label{eq:RstarD_eta0}
R_D(y) = \dfrac{\tet}{\alp^*} \wedge y,
\end{equation}
in which $\alp^* > 0$ uniquely solves
\begin{equation}\label{eq:alpha_eta0}
(1 + \tet) \la \mu - c + \dfrac{1}{2} \, \bet^2 \alp = \la \int_0^{\frac{\tet}{\alp}} (\tet - \alp y) S_Y(y) dy.
\end{equation}
\end{corollary}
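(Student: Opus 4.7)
The plan is to specialize Theorem \ref{thm:psiD} to the case $\eta = 0$ and then massage the defining equation \eqref{eq:alpha2} for $\alp^*$ into the form \eqref{eq:alpha_eta0} by splitting the integral at the point where the minimum in $\tet/\alp \wedge y$ switches.

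First, I would plug $\eta = 0$ into \eqref{eq:RstarD2}, which immediately gives $R_D(y) = \tet/\alp^* \wedge y$. Admissibility, time-homogeneity, state-independence of the resulting retention strategy, and the formula $\psi_D(x) = e^{-\alp^* x}$ for the minimum probability of ruin are all inherited from Theorem \ref{thm:psiD}. Uniqueness of $\alp^* > 0$ is the uniqueness of $\alp^* > \eta = 0$ established in Lemma \ref{lem:RstarD}.

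Second, I would rewrite \eqref{eq:alpha2} with $\eta = 0$, which reads
\[
c - \la \mu = \alp \left\{ \la \int_0^\infty \left( \dfrac{\tet}{\alp} \wedge y \right) S_Y(y) dy + \dfrac{\bet^2}{2} \right\}.
\]
Splitting the integral at $y = \tet/\alp$ gives
\[
\int_0^\infty \left( \dfrac{\tet}{\alp} \wedge y \right) S_Y(y) dy = \int_0^{\tet/\alp} y \, S_Y(y) dy + \dfrac{\tet}{\alp} \int_{\tet/\alp}^\infty S_Y(y) dy,
\]
and then using $\int_0^\infty S_Y(y) dy = \mu$ to convert the tail integral into $\mu - \int_0^{\tet/\alp} S_Y(y) dy$, multiplying through by $\la$, and rearranging, I obtain
\[
(1 + \tet) \la \mu - c + \dfrac{1}{2} \, \bet^2 \alp = \la \int_0^{\tet/\alp} (\tet - \alp y) S_Y(y) dy,
\]
which is \eqref{eq:alpha_eta0}. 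Since every step is an equivalent rewriting, $\alp^*$ satisfies one equation if and only if it satisfies the other.

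There is no genuine obstacle here; the only minor care is the bookkeeping in the integration-by-pieces manipulation that converts the $\wedge$-integral into the form $\int_0^{\tet/\alp}(\tet - \alp y) S_Y(y) dy$.
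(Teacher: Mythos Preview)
Your proposal is correct and matches the paper's approach exactly: the paper states that this corollary is a straightforward application of Theorem~\ref{thm:psiD} and omits the details. Your specialization of \eqref{eq:RstarD2} and your algebraic rearrangement of \eqref{eq:alpha2} into \eqref{eq:alpha_eta0} are precisely what is needed, and the bookkeeping you outline is accurate.
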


\medskip

If $\tet = 0$, then the assumptions concerning the premium income rate $c$ become $\la \mu < c < \la \mu + \frac{\eta}{2} \, \la \sig^2$; thus, we can write $c = \la \mu + \frac{\eta_0}{2} \, \la \sig^2$ for some $0 < \eta_0 < \eta$.  We use this $\eta_0$ in the expression for $\alp^*$ in the following corollary.  In this case, the mean-variance premium principle reduces to the variance premium principle, and the optimal reinsurance strategy is proportional insurance with a constant proportion.

\begin{corollary}\label{cor:tet0D}
If $\tet = 0$, then the optimal retention strategy to minimize the probability of ruin is determined by
\begin{equation}\label{eq:RstarD_tet0}
R_D(y) = \dfrac{\eta}{\alp^*} \, y,
\end{equation}
in which
\begin{equation}\label{eq:alpha_tet0}
\alp^* = \dfrac{1}{2 \bet^2} \left\{ \big( \eta \bet^2 - (\eta - \eta_0)\la \sig^2 \big) + \sqrt{ \big( \eta \bet^2 - (\eta - \eta_0)\la \sig^2 \big)^2 + 4 \eta^2 \bet^2 \la \sig^2} \right\}.
\end{equation}
\end{corollary}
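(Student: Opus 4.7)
The plan is to apply Theorem \ref{thm:psiD} directly with $\tet = 0$, observe that the pointwise minimum in \eqref{eq:RstarD2} collapses onto a single branch, and reduce the defining equation \eqref{eq:alpha2} for $\alp^*$ to a quadratic in $\alp$ that can be solved in closed form.

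First, I would note that Lemma \ref{lem:RstarD} guarantees $\alp^* > \eta$. When $\tet = 0$, this forces $(\eta/\alp^*) \, y < y$ for every $y > 0$, so $R_D(y) = \big((\tet + \eta y)/\alp^*\big) \wedge y$ collapses to $R_D(y) = (\eta/\alp^*) y$, which is exactly \eqref{eq:RstarD_tet0}.

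Next, I would substitute $\tet = 0$ and $R_D(y) = (\eta/\alp) \, y$ into \eqref{eq:alpha2}. Using $\int_0^\infty y \, S_Y(y) \, dy = \frac{1}{2} \E(Y^2) = \frac{\sig^2}{2}$, the integral collapses to $\eta \sig^2/(2\alp)$; writing $c - \la \mu = \frac{\eta_0}{2} \la \sig^2$, equation \eqref{eq:alpha2} becomes
\[
\frac{\eta_0}{2} \, \la \sig^2 = (\alp - \eta) \left\{ \dfrac{\la \eta \sig^2}{2\alp} + \dfrac{\bet^2}{2} \right\}.
\]
Multiplying through by $2\alp$ and regrouping, this is equivalent to the quadratic
\[
q(\alp) := \bet^2 \alp^2 - \big(\eta \bet^2 - (\eta - \eta_0)\la \sig^2\big)\alp - \la \eta^2 \sig^2 = 0.
\]
Since the constant term $-\la \eta^2 \sig^2$ is strictly negative, the product of the two roots is negative, so exactly one root is positive; that root is given by the quadratic formula with the $+$ sign in front of the square root, yielding \eqref{eq:alpha_tet0}.

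Finally, I would verify that this positive root lies in $(\eta, \infty)$, as required both for admissibility and for consistency with Lemma \ref{lem:RstarD}. A direct evaluation gives $q(\eta) = -\eta \eta_0 \la \sig^2 < 0$, and $q(\alp) \to +\infty$ as $\alp \to \infty$, so the positive root indeed satisfies $\alp^* > \eta$. There is no real conceptual obstacle here; the only work is the algebra that reduces \eqref{eq:alpha2} to a quadratic and the sign check on $q(\eta)$, both of which are routine.
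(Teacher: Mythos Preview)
Your proposal is correct and follows exactly the approach the paper indicates: the paper omits the proof, remarking only that it is a straightforward application of Theorem~\ref{thm:psiD}, and you have carried out precisely that application, correctly collapsing the minimum in \eqref{eq:RstarD2} using $\alp^* > \eta$, reducing \eqref{eq:alpha2} to a quadratic via $\int_0^\infty y\,S_Y(y)\,dy = \sig^2/2$, and selecting the appropriate root.
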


\section{Classical risk model perturbed by a diffusion}\label{sec:4}

\setcounter{equation}{0}
\renewcommand{\theequation}
{4.\arabic{equation}}

Returning to the classical risk model perturbed by a diffusion in \eqref{eq:X}, $\psi$ is the unique viscosity solution of the following BVP; see Appendix \ref{app:A} for a proof of this statement:  $v(x) = 1$ for $x < 0$, and for $x \ge 0$,
\begin{equation}\label{eq:hjbx}
\begin{cases}
-\kap v_x + \frac{1}{2} \, \bet^2 v_{xx} + \la \inf \limits_{R} \left\{ \left( (1 + \tet) \E R + \eta \E(YR) - \dfrac{\eta}{2} \, \E \big(R^2 \big) \right) v_x + \E v(x - R) - v(x) \right\} = 0, \\
v(0) = 1, \quad \lim \limits_{x \to \infty} v(x) = 0.
\end{cases}
\end{equation}
Because it is difficult, if not impossible, to obtain an explicit expression for the solution of \eqref{eq:hjbx}, in Section \ref{sec:41}, we obtain the optimal reinsurance strategy to maximize the adjustment coefficient for the classical risk model perturbed by a diffusion; see, for example, Centeno \cite{C86}, Centeno and Sim\~oes \cite{CS91}, Hald and Schmidli \cite{HS04}, Liang and Guo \cite{LG07} and \cite{LG08}, Wei, Liang, and Yuen \cite{WLY18}, and Zhang and Liang \cite{ZL16}.

Then, in Section \ref{sec:43}, we adapt the work in Cohen and Young \cite{CY2019} to modify $\psi_D$ by terms of order $\mathcal{O}\big( \la^{-1/2} \big)$ to show that, for large values of $\la$ and for $Y$, $\tet$, and $c$ scaled appropriately, $\psi_D$ and $\psi$ are approximately equal.

\subsection{Maximizing the adjustment coefficient}\label{sec:41}

In Section \ref{sec:3}, we showed that minimizing the probability of ruin for the diffusion approximation is equivalent to maximizing the adjustment coefficient for that model.  Thus, in this section, we begin by maximizing the adjustment coefficient for the classical risk model perturbed by a diffusion.

Let $\rho(R)$ be the adjustment coefficient for the classical risk model perturbed by a diffusion in \eqref{eq:X} for a given retention function $R$, from which one can define a time- and state-independent retention {\it strategy}.\footnote{Recall that the optimal retention strategy for the diffusion approximation is both time- and state-independent; see the expression in \eqref{eq:RstarD2}.}  Then, $\rho(R) > 0$ satisfies the following equation; compare with equation (7.2) in Dufresne and Gerber \cite{DG1991}:
\begin{equation}\label{eq:rhoJR}
\left[- \kap + \la \left( (1 + \tet) \E R + \eta \E(Y R) - \frac{\eta}{2} \, \E\big(R^2\big) \right) \right] r - \frac{1}{2} \, \bet^2 r^2 - \la \big( M_R(r) - 1 \big) = 0,
\end{equation}
in which $M_R(r) = \E \big( e^{rR} \big)$ is the moment generating function of the random variable $R = R(Y)$ evaluated at $r$.  Recall that we assume $\E \big( e^{rY} \big)$ is finite in a neighborhood of $0$; thus, it follows from $0 \le R(y) \le y$ for all $y \ge 0$ that $\E \big( e^{rR} \big)$ is finite in that same neighborhood of $0$.  Our goal is to maximize $\rho(R)$, that is, to find $\rhoj$ given by
\[
\rhoj = \sup_{R} \rho(R),
\]
and we also want to find the maximizing retention function $R$.  To that end, define the function $j$ by the left side of \eqref{eq:rhoJR}, that is,
\begin{equation}\label{eq:j}
j(r) = \left[- \kap + \la \left( (1 + \tet) \E R + \eta \E(Y R) - \frac{\eta}{2} \E\big(R^2\big) \right) \right] r - \frac{1}{2} \, \bet^2 r^2 - \la \big( M_R(r) - 1 \big),
\end{equation}
for $r \ge 0$.  Note that $j(0) = 0$,
\[
j'(r) = \left[- \kap + \la \left( (1 + \tet) \E R + \eta \E(Y R) - \frac{\eta}{2} \E\big(R^2\big) \right) \right] - \bet^2 r - \la \E \big( R e^{rR} \big),
\]
and
\[
j''(r) = - \bet^2 - \la \E \big( R^2 e^{rR} \big) < 0.
\]
Because $j$ is concave with $j(0) = 0$, one necessary condition for \eqref{eq:rhoJR} to have a positive solution is that $j'(0) > 0$, or equivalently,
\begin{equation}\label{eq:jprime0}
c - \la \E Y > \la \left( \tet \E(Y - R) + \dfrac{\eta}{2} \, \E\big( (Y - R)^2 \big) \right),
\end{equation}
which is equivalent to the net-profit condition in \eqref{eq:drift_condition}.  If the following holds,
\begin{equation}\label{eq:r_infty}
\lim \limits_{r \to (r_\infty)-} j(r) < 0,
\end{equation}
in which $r_\infty = \sup \{ r > 0: M_R(r) < \infty \}$, then there exists a unique solution $\rho(R) > 0$ of \eqref{eq:rhoJR} for a given retention function $R$.

Because $j(r) < 0$ for $\rho(R) < r < r_\infty$, the left side of \eqref{eq:rhoJR} is non-positive at the maximum $\rhoj$ when $R$ is any retention function for which the adjustment coefficient exists, and the left side of \eqref{eq:rhoJR} is exactly equal to $0$ at the optimal $R$, which we denote by $\hRj$.  In other words, $\rhoj$ solves
\[
\sup_{R} \left\{ \left[ - \kap + \la \left( (1 + \tet) \E R + \eta \E(Y R) - \frac{\eta}{2} \, \E\big(R^2\big) \right) \right] r - \frac{1}{2} \, \bet^2 r^2 - \la \big( M_R(r) - 1 \big) \right\} = 0,
\]
or equivalently,
\begin{equation}\label{eq:rhoj}
- \kap r - \frac{1}{2} \bet^2 r^2 + \la \sup_{R} \left\{ \left((1 + \tet) \E R + \eta \E(Y R) - \frac{\eta}{2} \, \E\big(R^2 \big) \right) r  - \big( M_R(r) - 1 \big) \right\} = 0.
\end{equation}
See Guerra and Centeno \cite{GC04} for an interesting paper that relates maximizing the adjustment coefficient to maximizing expected utility of surplus.

In the following theorem, we present the solution $\rhoj > 0$ of \eqref{eq:rhoj} and the corresponding optimal retention function $\hRj$.

\begin{theorem}\label{thm:rhoj}
The maximum adjustment coefficient $\rhoj > 0$ for the risk process in \eqref{eq:X} uniquely solves
\begin{equation}\label{eq:rhoj2}
c - \la \mu = r \left\{ \la \int_0^\infty \dfrac{e^{r \hRj(y)} - 1}{r} \, S_Y(y) dy + \dfrac{\bet^2}{2} \right\},
\end{equation}
and the corresponding optimal retention function $\hRj$ is given by
\begin{equation}\label{eq:hRj}
\hRj(y) =
\begin{cases}
y, &\quad 0 \le y < \dfrac{1}{\rhoj} \ln(1 + \tet), \vspace{1ex} \\
R_c(\rhoj, y), &\quad y \ge \dfrac{1}{\rhoj} \ln(1 + \tet),
\end{cases}
\end{equation}
in which $R_c(r, y) \in [0, y]$ uniquely solves
\begin{equation}\label{eq:Rc}
(1 + \tet) + \eta y - \eta R - e^{r R} = 0,
\end{equation}
for $y \ge \frac{1}{r} \ln(1 + \tet)$ and for any $r > 0$.
\end{theorem}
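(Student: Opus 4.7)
The plan is to solve the maximization inside \eqref{eq:rhoj} by pointwise optimization in $y$, and then to use the resulting first-order condition, which is precisely \eqref{eq:Rc}, together with an integration-by-parts manipulation to convert the resulting equation into \eqref{eq:rhoj2}.

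First I would fix $r > 0$ and rewrite the inner supremum in \eqref{eq:rhoj} as
\[
\sup_R \int_0^\infty \Big\{[(1 + \tet) + \eta y]R(y)\,r - \tfrac{\eta}{2}R^2(y)\,r - (e^{rR(y)} - 1)\Big\} dF_Y(y).
\]
Because both the integrand and the admissibility constraint $R(y) \in [0, y]$ decompose in $y$, I would maximize pointwise. For each fixed $y$, the integrand is strictly concave in $R$ with derivative $r\big[(1 + \tet) + \eta y - \eta R - e^{rR}\big]$; this derivative equals $r(\tet + \eta y) > 0$ at $R = 0$ and $r\big[(1+\tet) - e^{ry}\big]$ at $R = y$, the latter being nonpositive exactly when $y \ge y_0(r) := \tfrac{1}{r}\ln(1 + \tet)$. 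Strict monotonicity of the bracket in $R$ then forces the pointwise optimizer to be $y$ on $\{y < y_0(r)\}$ and a unique interior root $R_c(r, y) \in (0, y)$ of \eqref{eq:Rc} on $\{y \ge y_0(r)\}$, establishing \eqref{eq:hRj}.

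Next I would substitute $R = \hRj$ into $j(\rhoj) = 0$, divide by $\rhoj$, use \eqref{eq:kap} to eliminate $\kap$ in favor of $c$, and rearrange to obtain
\[
c - \la\mu = \tfrac{\bet^2 \rhoj}{2} + \la\Big[\tet \E(Y - \hRj) + \tfrac{\eta}{2}\E\big((Y - \hRj)^2\big) - \E \hRj + \tfrac{1}{\rhoj}\big(M_{\hRj}(\rhoj) - 1\big)\Big].
\]
It then suffices to show that the bracketed sum equals $\int_0^\infty (e^{\rhoj \hRj(y)} - 1) S_Y(y) dy$. Applying the identity $\E g(Y) = g(0) + \int_0^\infty g'(y) S_Y(y) dy$ (with $g(0) = 0$ throughout, since $\hRj(0) = 0$) to each of the four terms in the bracket rewrites it as $\int_0^\infty H(y) S_Y(y) dy$, where a short calculation gives
\[
H(y) = \tet + \eta(y - \hRj(y)) + \hRj'(y)\big[e^{\rhoj \hRj(y)} - (1 + \tet + \eta(y - \hRj(y)))\big].
\]
On $\{y < y_0(\rhoj)\}$ one has $\hRj'(y) = 1$, so $H(y)$ collapses to $e^{\rhoj \hRj(y)} - 1$. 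On $\{y \ge y_0(\rhoj)\}$ the bracketed factor in $H$ vanishes by \eqref{eq:Rc}, while $\tet + \eta(y - \hRj) = e^{\rhoj \hRj(y)} - 1$ also by \eqref{eq:Rc}, so $H(y) = e^{\rhoj \hRj(y)} - 1$ on that set as well. Substituting yields \eqref{eq:rhoj2}.

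The main obstacle is the integration-by-parts bookkeeping above, together with checking that $\hRj$ is regular enough for the identities to apply: $\hRj$ is continuous and piecewise $C^1$, with $\hRj'(y) = \eta/(\eta + \rhoj e^{\rhoj \hRj(y)}) \in (0, 1)$ on $\{y > y_0(\rhoj)\}$ by implicit differentiation of \eqref{eq:Rc}, which is amply enough for the Stieltjes integration by parts against $S_Y$. Finally, for uniqueness of $\rhoj > 0$ solving \eqref{eq:rhoj2}, suppose $0 < r_1 < r_2$ were both solutions and let $R_i$ denote the pointwise optimizer at $r_i$; then $j(r_2; R_2) = 0 = j(0; R_2)$, so strict concavity of $r \mapsto j(r; R_2)$ forces $j(r_1; R_2) > 0$, contradicting $\sup_R j(r_1; R) = 0$. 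Existence follows from $\sup_R j(r; R) > 0$ for small $r > 0$ by the net-profit condition and $\sup_R j(r; R) \to -\infty$ as $r \to \infty$.
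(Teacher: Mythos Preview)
Your proposal is correct and follows the paper's approach closely for the derivation of the optimizer $\hRj$ (pointwise maximization of a strictly concave integrand, yielding the first-order condition \eqref{eq:Rc}) and for the reduction to \eqref{eq:rhoj2} (integration by parts against $S_Y$, then collapsing $H(y)$ to $e^{\rhoj\hRj(y)}-1$ via the first-order condition on each piece). Your explicit computation of $H(y)$ is more detailed than the paper's, which simply says ``perform integration by parts and simplify.''

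Where you genuinely differ is in the proof of uniqueness of $\rhoj$. The paper rewrites \eqref{eq:rhoj2} as $G(r)=0$ for an explicit function $G$ and computes $G'(r)<0$ directly, which in turn requires establishing $\partial R_c/\partial r < 0$ by implicitly differentiating \eqref{eq:Rc}. Your argument is shorter and conceptually cleaner: fixing $R_2 = \hat R(r_2)$, strict concavity of $r\mapsto j(r;R_2)$ together with $j(0;R_2)=j(r_2;R_2)=0$ forces $j(r_1;R_2)>0$, contradicting $\sup_R j(r_1;R)=0$. This avoids any differentiation of the optimizer in $r$. The paper's route, on the other hand, produces $\partial R_c/\partial r$ as a byproduct, which it reuses later (e.g., in Theorem~\ref{thm:hRjn_RD}), so neither approach strictly dominates. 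Your existence sketch is correct in spirit but slightly underspecified: to invoke the intermediate value theorem on $\Phi(r):=\sup_R j(r;R)$ you should note that $\Phi$ is continuous (e.g., because $\hat R(r,y)$ depends continuously on $r$ and dominated convergence applies), and the claim $\Phi(r)\to-\infty$ relies on a uniform bound on the drift term, which follows from $R\le Y$.
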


\begin{proof}
Define the function $k$ by the expression in the curly brackets of \eqref{eq:rhoj}, ignoring the last term $1$.  Specifically,
\begin{align}\label{eq:k}
k(r, R)&= \left((1 + \tet) \E R + \eta \E(Y R) - \frac{\eta}{2} \, \E\big(R^2 \big) \right) r  - M_R(r) \notag  \\
&= \int_0^\infty \left[ \left((1 + \tet) R(y) + \eta y R(y) - \frac{\eta}{2} \, R^2(y) \right) r  - e^{r R(y)} \right] dF_Y(y).
\end{align}
For a given value of $r$, we wish to find $\hR(r, y)$ that maximizes $k$.  Consider the integrand in the second line of the expression for $k$, namely,
\[
\ell(R) = \left((1 + \tet) R + \eta y R - \frac{\eta}{2} \, R^2 \right) r  - e^{r R}.
\]
By differentiating with respect to $R$, we obtain
\[
\ell_R(R) = \big((1 + \tet) + \eta y - \eta R \big) r  - r e^{r R},
\]
and
\[
\ell_{RR}(R) = - \eta r  - r^2 e^{r R} < 0.
\]
Thus, $\ell$ is strictly concave with respect to $R$, and its maximizer, for a given value of $r$ and subject to $0 \le R \le y$, is given by
\[
\hR(r, y) = R_c(r, y) \wedge y,
\]
in which $R_c(r, y) > 0$ solves \eqref{eq:Rc}.  Equation \eqref{eq:Rc} has a unique positive solution $R_c(r, y)$ because the left side decreases from $\tet + \eta y > 0$ to $-\infty$ as $R$ increases from $0$ to $\infty$.  Moreover, by differentiating \eqref{eq:Rc} with respect to $y$, we obtain
\begin{equation}\label{eq:Rc_y}
\dfrac{\partial R_c(r, y)}{\partial y} = \dfrac{\eta}{\eta + r e^{r R_c(r, y)}} \in (0, 1).
\end{equation}
Also, note that if $y$ equals $\frac{1}{r} \ln(1 + \tet)$, then $R_c(r, y)$ also equals $\frac{1}{r} \ln(1 + \tet)$.  Thus, because $R_c$ increases with $y$ but at a rate less than $1$, we can write $\hR$ as follows:
\begin{equation}\label{eq:hR}
\hR(r, y) =
\begin{cases}
y, &\quad 0 \le y < \dfrac{1}{r} \ln(1 + \tet), \vspace{1ex} \\
R_c(r, y), &\quad y \ge \dfrac{1}{r} \ln(1 + \tet).
\end{cases}
\end{equation}
Substitute $\hRj(y) := \hR(\rhoj, y)$ into \eqref{eq:rhoj}, use the expression for $\hR(\rhoj, y)$ in \eqref{eq:hR}, and perform integration by parts and simplify to obtain \eqref{eq:rhoj2}.

It remains to show that \eqref{eq:rhoj2} has a unique solution $\rhoj > 0$, in which the arg max is given by $\hRj$ in \eqref{eq:hR}.  To that end, one can show that \eqref{eq:rhoj2} is equivalent to $G(\rhoj) = 0$, in which $G$ is given by
\[
G(r) = \la \int_0^{\frac{1}{r} \ln(1 + \tet)} \big( 1 + \tet + \eta y - e^{ry} \big) S_Y(y) dy + \la \eta \int_{\frac{1}{r} \ln(1 + \tet)}^\infty R_c(r, y) S_Y(y) dy - \kap - \dfrac{1}{2} \, \bet^2 r.
\]
(Indeed, in \eqref{eq:rhoj2}, (i) substitute for
\[
c - \la \mu = \la \Big(\tet \mu + \frac{\eta}{2} \, \sig^2 \Big) - \kap = \la \int_0^\infty (\tet + \eta y) S_Y(y) dy - \kap,
\]
(ii) substitute for $e^{\hRj(y)}$ in the integrand on the right side using \eqref{eq:hRj} and \eqref{eq:Rc}, and (iii) simplify the result to get $G(\cdot) = 0$.)  By differentiating \eqref{eq:Rc} with respect to $r$, we obtain
\[
\dfrac{\partial R_c}{\partial r} = - \, \dfrac{R_c e^{r R_c}}{\eta + r e^{r R_c}} < 0.
\]
Note that $G(0) = c  - \la \E Y > 0$, $\lim_{r \to \infty} G(r) = - \infty$, and
\[
G'(r) = - \la \int_0^{\frac{1}{r} \ln(1 + \tet)} y e^{ry} S_Y(y) dy + \la \eta \int_{\frac{1}{r} \ln(1 + \tet)}^\infty \dfrac{\partial R_c(r, y)}{\partial r} S_Y(y) dy - \dfrac{1}{2} \, \bet^2 < 0.
\]
Thus, $G$ has a unique zero $\rhoj$.
\end{proof}

\begin{remark}
Note the parallel between the expressions for $\rhod = \alp^* - \eta$ in \eqref{eq:alpha2} and for $\rhoj$ in \eqref{eq:rhoj2}.  They only differ in the integrand on the right side of each equation. In the former, the integrand has a factor of $R_D(y);$ in the latter, $(e^{\rhoj \hRj(y)} - 1)/\rhoj$, whose first-order linear approximation equals $\hRj(y)$.  \qed
\end{remark}

In the first corollary, we prove some interesting properties of $\hRj$.  Compare the first property in this corollary with Corollary \ref{cor:comono}, and see the discussion in Remark \ref{rem:comono}.

\begin{corollary}\label{cor:prop_hRj}
$\hRj$ satisfies the following properties:
\begin{enumerate}
\item{} $\hRj$ increases with rate lying in $(0, 1];$ thus, $\hRj$ in \eqref{eq:hRj} and $y - \hRj$ are non-decreasing functions of $y$.
\item{} $\hRj$ is concave, strictly for $y > \frac{1}{\rhoj} \, \ln (1 + \tet)$.
\item{} $\hRj$ has the curvilinear asymptote $g$ given by
\[
g(y) = \dfrac{1}{\rhoj} \, \ln \big(1 + \tet + \eta y),
\]
and $\hRj(y) \le g(y) \wedge y$ for all $y \ge 0$.
\end{enumerate}
\end{corollary}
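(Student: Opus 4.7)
The plan is to analyze the three properties separately, working from the piecewise formula \eqref{eq:hRj} and the defining equation \eqref{eq:Rc}. Throughout, I will let $y_0 := \frac{1}{\rhoj}\ln(1+\tet)$ denote the transition point.

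For property 1, I would read off the derivative on each piece. On $[0, y_0)$, $\hRj(y) = y$ has derivative $1$; on $(y_0, \infty)$, equation \eqref{eq:Rc_y} immediately gives $\hRj'(y) = \eta/(\eta + \rhoj\, e^{\rhoj \hRj(y)}) \in (0,1)$. Continuity at $y_0$ follows because $R_c(\rhoj, y_0) = y_0$, so $\hRj$ is non-decreasing, and $y - \hRj(y)$ is non-decreasing as well since $\hRj'(y) \le 1$ everywhere.

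For property 2, I would differentiate \eqref{eq:Rc_y} one more time in $y$; the resulting expression has a negative sign multiplying only positive factors, so $\hRj'' < 0$ strictly on $(y_0, \infty)$. The linear piece is (non-strictly) concave, and at the kink $y_0$ the slope drops from $1$ to $\eta/(\eta + \rhoj(1+\tet)) < 1$, so concavity is preserved globally while strict concavity holds beyond $y_0$.

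For property 3, the pointwise bound $\hRj(y) \le g(y) \wedge y$ should come as follows. Admissibility gives $\hRj \le y$. On $[y_0, \infty)$, substitute $R = g(y)$ into the left side of \eqref{eq:Rc}: since $e^{\rhoj g(y)} = 1 + \tet + \eta y$ by definition of $g$, the substitution collapses to $-\eta g(y) \le 0$, and because the left side of \eqref{eq:Rc} is strictly decreasing in $R$ and vanishes at $R = \hRj(y)$, we obtain $\hRj(y) \le g(y)$. On $[0, y_0]$, $\hRj(y) = y$ and $y \le g(y)$ reduces to $e^{\rhoj y} \le 1+\tet+\eta y$, which holds since the function $y \mapsto 1+\tet+\eta y - e^{\rhoj y}$ is concave and positive at both endpoints ($\tet$ at $y=0$ and $\eta y_0$ at $y = y_0$). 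The step I expect to be the main obstacle is verifying that $g$ is a true curvilinear asymptote, i.e., $g(y) - \hRj(y) \to 0$. For this, I would set $\delta(y) := g(y) - \hRj(y) \ge 0$ and substitute $R = g(y) - \delta(y)$ into \eqref{eq:Rc}; after using $e^{\rhoj g(y)} = 1+\tet+\eta y$, the relation should simplify to
\[
e^{\rhoj g(y)}\bigl(1 - e^{-\rhoj \delta(y)}\bigr) = \eta\, \hRj(y).
\]
Since $\hRj(y) \le g(y) = O(\ln y)$ while $e^{\rhoj g(y)} = 1+\tet+\eta y \to \infty$, dividing yields $1 - e^{-\rhoj \delta(y)} \to 0$, forcing $\delta(y) \to 0$ and completing the proof.
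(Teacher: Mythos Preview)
Your proposal is correct and follows essentially the same route as the paper's proof: the paper uses \eqref{eq:Rc_y} for property~1, differentiates it once more for property~2 (including the same slope comparison at the kink), and for property~3 simply asserts that $\hRj(y) \le g(y)\wedge y$ is ``clear'' and that the limit is a ``straightforward calculation.'' Your write-up supplies the details the paper omits---the monotonicity-in-$R$ argument for the bound and the explicit identity $e^{\rhoj g(y)}(1-e^{-\rhoj\delta(y)})=\eta\,\hRj(y)$ for the asymptote---so there is no genuine difference in approach.
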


\begin{proof}
Equations \eqref{eq:Rc_y} and \eqref{eq:hR} show that $\hRj$ is non-decreasing with respect to $y$ with rate of increase less than or equal to 1; property 1 follows.

By differentiating $\frac{\partial R_c(r, y)}{\partial y}$ with respect to $y$, we see that the second derivative of $R_c(r, y)$ is negative; thus, $\hRj(y)$ is strictly concave for $y > \frac{1}{\rhoj} \, \ln(1 + \tet)$.  $\hRj(y)$ is linear and, hence, concave for $y < \frac{1}{\rhoj} \, \ln(1 + \tet)$.  Moreover,
\[
\dfrac{\partial R_c(r, y)}{\partial y} \Bigg|_{y = \frac{1}{\rhoj} \, \ln(1 + \tet) +} = \dfrac{\eta}{\eta + (1 + \tet) \rhoj} < 1 = \dfrac{\partial R_c(r, y)}{\partial y} \Bigg|_{y = \frac{1}{\rhoj} \, \ln(1 + \tet) -}.
\]
Thus, $\hRj$ is concave across $y = \frac{1}{\rhoj} \, \ln(1 + \tet)$, and we have shown property 2.

Finally, it is clear that $\hRj(y) \le g(y) \wedge y$ for all $y \ge 0$, and a straightforward calculation shows that
\[
\lim \limits_{y \to \infty} \left( \hRj(y) - \dfrac{1}{\rhoj} \, \ln \big(1 + \tet + \eta y) \right) = 0.
\]
We have, thereby, shown property 3.
\end{proof}

In the following two corollaries, we consider the special cases of $\eta = 0$ and $\tet = 0$, respectively.  In the first corollary, when $\eta = 0$, the mean-variance premium principle reduces to the  expected-value premium principle, and the optimal reinsurance strategy is excess-of-loss insurance with a constant deductible, as in Corollary \ref{cor:eta0D}.

\begin{corollary}
If $\eta = 0$, then the optimal retention strategy to maximize the adjustment coefficient is determined by
\begin{equation}\label{eq:hRj_eta0}
\hRj(y) = \dfrac{1}{\rhoj} \ln(1 + \tet) \wedge y,
\end{equation}
for all $y \ge 0$, in which the maximum adjustment coefficient $\rhoj > 0$ solves
\begin{equation}\label{eq:rhoj_eta0}
(1 + \tet) \la \mu - c + \dfrac{1}{2} \, \bet^2 r = \la \int_0^{\frac{1}{r} \ln(1 + \tet)} \big( 1 + \tet - e^{ry} \big) S_Y(y) dy.
\end{equation}
Moreover, because $\frac{\ln(1 + \tet)}{\rhoj} \le \frac{\tet}{\rhod}$ when $\eta = 0$, it follows that $\hRj(y) \le R_D(y)$ for all $y \ge 0$.
\end{corollary}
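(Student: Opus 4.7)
The proof breaks into three pieces. The first two---the explicit form of $\hRj$ in \eqref{eq:hRj_eta0} and the characterization \eqref{eq:rhoj_eta0} of $\rhoj$---are routine specializations of Theorem~\ref{thm:rhoj}. With $\eta = 0$, equation \eqref{eq:Rc} reduces to $e^{rR}=1+\tet$, so $R_c(r,y)=\ln(1+\tet)/r$ is independent of $y$, and \eqref{eq:hRj} immediately yields $\hRj(y)=d_J\wedge y$ with $d_J:=\ln(1+\tet)/\rhoj$. Substituting this $\hRj$ into \eqref{eq:rhoj2}, splitting the integral at $d_J$ (so $e^{\rhoj\hRj(y)}=e^{\rhoj y}$ on $[0,d_J]$ and $=1+\tet$ on $[d_J,\infty)$), using $\int_{d_J}^\infty S_Y=\mu-\int_0^{d_J}S_Y$, and invoking $\kap=(1+\tet)\la\mu-c$ then produces \eqref{eq:rhoj_eta0} after straightforward algebra.

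The substantive piece is the inequality $\ln(1+\tet)/\rhoj\le\tet/\rhod$, which instantly yields $\hRj(y)=d_J\wedge y\le d_D\wedge y=R_D(y)$ with $d_D:=\tet/\rhod$. I plan to prove this by using $R_D$ as a feasible retention in the classical problem. Define $r^*:=\rhod\ln(1+\tet)/\tet$, so that $r^*\le\rhod$ (because $\ln(1+\tet)\le\tet$) and $r^*d_D=\ln(1+\tet)$. Since $\rhoj=\sup_R\rho(R)\ge\rho(R_D)$ and $\rho(R_D)$ is the unique positive root of the concave function $j$ in \eqref{eq:j} with $R=R_D$, it suffices to show $j(r^*)\ge 0$; then $\rhoj\ge r^*$, which is precisely the desired inequality. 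An integration by parts gives $M_{R_D}(r)-1=r\int_0^{d_D}e^{ry}S_Y(y)\,dy$, from which
\[
\frac{j(r^*)}{r^*}=-\kap+\la\int_0^{d_D}\bigl[(1+\tet)-e^{r^*y}\bigr]S_Y(y)\,dy-\frac{\bet^2 r^*}{2}.
\]
Eliminating $\kap$ via \eqref{eq:alpha_eta0} (rewritten, using $\rhod y=\tet y/d_D$, as $\kap+\bet^2\rhod/2=\la\tet\int_0^{d_D}(1-y/d_D)S_Y(y)\,dy$) and using $e^{r^*y}=(1+\tet)^{y/d_D}$ on $[0,d_D]$, the inequality $j(r^*)\ge 0$ reduces to
\[
\la\int_0^{d_D}\phi(y/d_D)\,S_Y(y)\,dy\le\frac{\bet^2\rhod\,\bigl(\tet-\ln(1+\tet)\bigr)}{2\tet},
\]
with $\phi(z):=(1+\tet)^z-1-\tet z$. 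Because $\phi$ is strictly convex on $[0,1]$ with $\phi(0)=\phi(1)=0$, it is non-positive on $[0,1]$; hence the left side is $\le 0$ while the right side is strictly positive, closing the argument.

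The main obstacle is extracting the auxiliary $\phi$ from the mismatch between the diffusion integrand $\tet-\rhod y$ and the classical integrand $(1+\tet)-e^{r^*y}$, which a priori do not align. The specific choice $r^*=\rhod\ln(1+\tet)/\tet$ is engineered exactly so that $e^{r^*y}$ becomes $(1+\tet)^{y/d_D}$ on $[0,d_D]$; after that substitution, the difference of the two integrands collapses to $-\phi(y/d_D)$, and the desired sign follows automatically from the endpoint-vanishing convexity of $\phi$.
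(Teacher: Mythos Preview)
Your argument is correct. The first two pieces coincide with the paper's (both are immediate from Theorem~\ref{thm:rhoj}). For the inequality $d_J\le d_D$, however, you take a genuinely different route.

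The paper works directly with the two characterizing equations: it observes that the left side of the equation defining $d_D$ (equation \eqref{eq:dD} in the paper) is increasing in $d$, so it suffices to check that this left side, evaluated at $d_J$, is at most $(1+\tet)\la\mu-c$; substituting the equation for $d_J$ reduces this to $\int_0^{d_J}\bigl(1+\tet y/d_J-(1+\tet)^{y/d_J}\bigr)S_Y(y)\,dy\ge 0$. You instead exploit the variational inequality $\rhoj\ge\rho(R_D)$, plug the diffusion-optimal retention $R_D$ into the concave function $j$ of \eqref{eq:j}, and test at the engineered point $r^*=\rhod\ln(1+\tet)/\tet$; your reduction lands on $\int_0^{d_D}\phi(y/d_D)\,S_Y(y)\,dy\le\frac{\bet^2\rhod(\tet-\ln(1+\tet))}{2\tet}$ with the same auxiliary function (your $\phi$ is the paper's $-\tilde f$). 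Both proofs therefore culminate in the identical endpoint-vanishing convexity fact $(1+\tet)^z-1-\tet z\le 0$ on $[0,1]$, but integrated over $[0,d_J]$ in the paper and over $[0,d_D]$ in your version. Your approach has the conceptual advantage of explaining \emph{why} the inequality holds---$R_D$ is a feasible competitor in the classical problem---and it sidesteps verifying monotonicity of the implicit $d_D$-equation; the paper's approach is slightly more self-contained in that it does not invoke admissibility of $R_D$ or existence of $\rho(R_D)$ (both of which are, however, already established in Theorem~\ref{thm:psiD} and are unproblematic since $R_D$ is bounded).
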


\begin{proof}
Proving \eqref{eq:hRj_eta0} and \eqref{eq:rhoj_eta0} is a straightforward application of Theorem \ref{thm:rhoj}, so we omit those details.  Now, $\hRj(y) \le R_D(y)$ for all $y \ge 0$ if and only if
\[
\dfrac{1}{\rhoj} \ln(1 + \tet) \le \dfrac{\tet}{\alp^*} \, .
\]
From \eqref{eq:rhoD}, we know that $\alp^* = \rhod$ when $\eta = 0$; thus, this inequality is equivalent to
\[
\dfrac{1}{\rhoj} \ln(1 + \tet) \le \dfrac{\tet}{\rhod} \, ,
\]
or
\[
d_J \le d_D,
\]
in which we define $d_J = \frac{1}{\rhoj} \ln(1 + \tet)$ and $d_D = \frac{\tet}{\rhod}$.

From \eqref{eq:rhoj_eta0}, we deduce that $d_J$ solves
\begin{equation}\label{eq:dJ}
\la \int_0^d \left( (1 + \tet) - (1 + \tet)^{y/d} \right) S_Y(y) dy - \dfrac{1}{2} \, \frac{\bet^2 \ln(1 + \tet)}{d} = (1 + \tet) \la \mu - c.
\end{equation}
Similarly, from \eqref{eq:alpha_eta0}, we deduce that $d_D$ solves
\begin{equation}\label{eq:dD}
\tet \la \int_0^d \left( 1 - \dfrac{y}{d} \right) S_Y(y) dy - \dfrac{1}{2} \, \frac{\bet^2 \tet}{d} = (1 + \tet) \la \mu - c.
\end{equation}
The left side of \eqref{eq:dD} increases with respect to $d$; thus, to show that $d_J \le d_D$, it is enough to show that the left side of \eqref{eq:dD} is less than the right side when we set $d = d_J$, that is,
\[
\tet \la \int_0^{d_J} \left( 1 - \dfrac{y}{d_J} \right) S_Y(y) dy - \dfrac{1}{2} \, \frac{\bet^2 \tet}{d_J} \le (1 + \tet) \la \mu - c.
\]
After substituting for $(1 + \tet) \la \mu - c$ from \eqref{eq:dJ} and rearranging terms, we obtain the following equivalent inequality:
\[
\la \int_0^{d_J} \left( (1 + \tet) - (1 + \tet)^{y/d_J} \right) S_Y(y) dy - \tet \la \int_0^{d_J} \left( 1 - \dfrac{y}{d_J} \right) S_Y(y) dy \ge - \, \dfrac{1}{2} \, \frac{\bet^2}{d_J} \big( \tet - \ln(1+\tet) \big).
\]
For $\tet \ge 0$, we know that $\tet \ge \ln(1 + \tet)$; thus, it is enough to show the following stronger inequality:
\[
\la \int_0^{d_J} \left( (1 + \tet) - (1 + \tet)^{y/d_J} \right) S_Y(y) dy - \tet \la \int_0^{d_J} \left( 1 - \dfrac{y}{d_J} \right) S_Y(y) dy \ge 0,
\]
or equivalently,
\begin{equation}\label{ineq:2}
\int_0^{d_J} \left( 1 + \dfrac{\tet y}{d_J}  - (1 + \tet)^{y/d_J} \right) S_Y(y) dy \ge 0.
\end{equation}

To show inequality \eqref{ineq:2}, we will show that the integrand is non-negative for $y \in [0, d_J]$.  To that end, define $\tilde f$ on $[0, 1]$ by
\[
\tilde{f}(x) = 1 + \tet x  - (1 + \tet)^x.
\]
By differentiating, we obtain
\[
\tilde{f}'(x) = \tet - (1 + \tet)^x \ln(1 + \tet),
\]
and
\[
\tilde{f}''(x) = - (1 + \tet)^x \big(\ln(1 + \tet)\big)^2 < 0.
\]
Also, $\tilde f(0) = 0 = \tilde f(1)$, and $\tilde f'(0) \ge 0$; thus, $\tilde f(x) \ge 0$ for all $0 \le x \le 1$, and inequality \eqref{ineq:2} follows.  We have shown that $d_J \le d_D$, which implies that $\hRj \le R_D$.
\end{proof}

When $\tet = 0$, the mean-variance premium principle reduces to the variance premium principle, but the optimal reinsurance strategy is \textit{not} proportional insurance, by contrast with Corollary \ref{cor:tet0D}.

\begin{corollary}
If $\tet = 0$, then the optimal retention strategy to maximize the adjustment coefficient is determined by
\begin{equation}\label{eq:hRj_tet0}
\hRj(y) = R_c(\rhoj, y),
\end{equation}
for all $y \ge 0$, in which $R_c(\rhoj, y) \in [0, y]$ and the maximum adjustment coefficient $\rhoj > 0$ jointly solve
\begin{equation}\label{eq:Rc_tet0}
1 + \eta y - \eta R(y) - e^{rR(y)} = 0,
\end{equation}
and
\begin{equation}\label{eq:rhoj_tet0}
c - \la \mu = r \left\{ \la \int_0^\infty \dfrac{e^{r R(y)} - 1}{r} \, S_Y(y) dy + \dfrac{\bet^2}{2} \right\}.
\end{equation}
Moreover, there exists $y_0 > 0$ such that $R_c(\rhoj, y) > R_D(y)$ if and only if $0 < y < y_0$.
\end{corollary}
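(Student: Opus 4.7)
The formulas for $\hRj$ and $\rhoj$ follow by direct specialization of Theorem \ref{thm:rhoj} to $\tet = 0$: the threshold $\frac{1}{\rhoj}\ln(1+\tet)$ in \eqref{eq:hRj} collapses to zero, so $\hRj(y) = R_c(\rhoj, y)$ for every $y \ge 0$, while \eqref{eq:Rc} and \eqref{eq:rhoj2} reduce to \eqref{eq:Rc_tet0} and \eqref{eq:rhoj_tet0}, respectively.

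For the comparison with $R_D$, the plan is to reduce the two-dimensional problem to the sign analysis of a single scalar function. Since the map $R \mapsto \eta R + e^{\rhoj R}$ is strictly increasing and, by \eqref{eq:Rc_tet0}, $R_c(\rhoj, y)$ is the unique $R \in [0, y]$ satisfying $\eta R + e^{\rhoj R} = 1 + \eta y$, one has
\[
R_c(\rhoj, y) > R_D(y) \iff \eta R_D(y) + e^{\rhoj R_D(y)} < 1 + \eta y.
\]
Substituting $R_D(y) = (\eta/\alp^*)\,y$ from Corollary \ref{cor:tet0D} and using $\alp^* = \rhod + \eta$ from \eqref{eq:rhoD}, this becomes $\phi(y) > 0$, where
\[
\phi(y) := 1 + \frac{\eta\rhod}{\alp^*}\,y - \exp\!\left(\frac{\eta\rhoj}{\alp^*}\,y\right).
\]
Since $\phi(0) = 0$, $\phi'' < 0$, $\phi(y) \to -\infty$ as $y \to \infty$, and $\phi'(0) = (\eta/\alp^*)(\rhod - \rhoj)$, provided $\rhoj < \rhod$ the function $\phi$ has a unique positive zero $y_0$ and is positive exactly on $(0, y_0)$, which delivers the claim with this $y_0$.

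The main obstacle is therefore the strict comparison $\rhoj < \rhod$. My approach is to prove the stronger pointwise statement $\rho(R) < \rho_D(R)$ for every admissible retention function $R$ with $\rho(R) > 0$. The key analytic input is the strict inequality $e^z > 1 + z + z^2/2$ for $z > 0$, applied to $z = r R(Y)$: because $F_Y$ has full support on $\R^+$ and $\hRj(y) > 0$ for $y > 0$ (as $R_c(\rhoj, 0) = 0$ while $R_c(\rhoj, \cdot)$ is strictly increasing by \eqref{eq:Rc_y}), this inequality holds with positive probability at $r = \rho(R) > 0$, yielding $M_R(r) - 1 > r\E R + \frac{r^2}{2}\E(R^2)$. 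Substituting into \eqref{eq:rhoJR}, dividing by $r > 0$, and comparing with \eqref{eq:rhoDR} produces $\rho(R) < \rho_D(R)$. Applied at $R = \hRj$, this gives $\rhoj = \rho(\hRj) < \rho_D(\hRj) \le \sup_R \rho_D(R) = \rhod$, closing the argument.
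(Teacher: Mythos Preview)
Your argument is correct. The derivation of \eqref{eq:hRj_tet0}--\eqref{eq:rhoj_tet0} matches the paper's, and your reduction of the comparison $R_c(\rhoj,\cdot)$ vs.\ $R_D$ to the sign of the single concave function $\phi$ is clean and valid.

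The paper proceeds differently for the comparison: rather than encoding everything in $\phi$, it compares the two curves geometrically. It computes $R_c'(\rhoj,0)=\eta/(\rhoj+\eta)$ and $R_D'(0)=\eta/(\rhod+\eta)$, invokes $\rhoj<\rhod$ to get $R_c>R_D$ near $0$, and then uses the strict concavity of $R_c(\rhoj,\cdot)$ together with its logarithmic asymptote $\tfrac{1}{\rhoj}\ln(1+\eta y)$ (from Corollary~\ref{cor:prop_hRj}) versus the linearity of $R_D$ to force exactly one crossing. Your $\phi$-approach is more self-contained and avoids the appeal to the asymptote; the paper's approach leans more on structural properties already established. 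For the inequality $\rhoj<\rhod$, the paper simply cites Theorem~\ref{thm:LundbergD} (a forward reference), whereas you reprove it here; the content is the same---both rest on $e^z>1+z+z^2/2$ for $z>0$ applied to $z=\rho(R)R(Y)$. One small presentational point: your justification that $R(Y)>0$ with positive probability is phrased in terms of $\hRj$, which is all you actually need, but sits oddly inside a sentence about ``every admissible $R$''; it would read more smoothly to argue this only for $R=\hRj$, since that is the case you apply.
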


\begin{proof}
Showing \eqref{eq:hRj_tet0}, \eqref{eq:Rc_tet0}, and \eqref{eq:rhoj_tet0} is an easy application of Theorem \ref{thm:rhoj}, so we omit those details.  From \eqref{eq:Rc_y}, we know that $R_c$ is increasing with respect to $y$ and
\[
\dfrac{d R_c(\rhoj, y)}{dy} \bigg|_{y = 0} = \dfrac{\eta}{\rhoj + \eta} \, .
\]
From \eqref{eq:RstarD_tet0}, we know that
\[
\dfrac{d R_D(y)}{dy} \equiv \dfrac{\eta}{\rhod + \eta} \, ,
\]
for all $y \ge 0$.  In Theorem \ref{thm:LundbergD} below, we show $\rhoj < \rhod$; thus, $R_c(\rhoj, y) > R_D(y)$ in a neighborhood of $y = 0$.  Furthermore, Corollary \ref{cor:prop_hRj} shows that $R_c(\rhoj, y)$ is strictly concave with asymptote $g(y) \big|_{\tet = 0} = \frac{1}{\rhoj} \, \ln(1 + \eta y)$; by contrast, $R_D(y)$ is linear when $\tet = 0$.  It follows that there exists $y_0 > 0$ such that $R_c(\rhoj, y) > R_D(y)$ if and only if $0 < y < y_0$.
\end{proof}

We end this section with a theorem that relates $e^{- \rhoj x}$ to the minimum probability of ruin under the diffusion approximation. We use the notation from Theorems \ref{thm:psiD} and \ref{thm:rhoj}.

\begin{theorem}\label{thm:LundbergD}
\[
\psi_D(x) = e^{-(\alp^* - \eta)x} < e^{-\rhoj x},
\]
for all $x > 0$.
\end{theorem}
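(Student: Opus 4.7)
The plan is to prove the equivalent statement $\rhoj < \rhod = \alp^* - \eta$, which immediately gives $\psi_D(x) = e^{-\rhod x} < e^{-\rhoj x}$ for every $x > 0$. The strategy is to compare, retention-function by retention-function, the classical adjustment coefficient $\rho(R)$ with the diffusion adjustment coefficient $\rhod(R)$ from \eqref{eq:rhoDR}, and then optimize over $R$.

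First, I would expand $M_R(r) - 1 = r \E R + \frac{r^2}{2}\E(R^2) + \sum_{k \ge 3}\frac{r^k}{k!}\E(R^k)$ in \eqref{eq:rhoJR}, substitute, combine the $(1+\tet)\la \E R \cdot r$ term with $-\la r \E R$, and divide by $r > 0$. This shows that $\rho(R)$ is the unique positive zero of
\[
F_R(r) := -\kap + \la\tet \E R + \la\eta \E(YR) - \tfrac{\la\eta}{2}\E(R^2) - \tfrac{r}{2}\bigl(\bet^2 + \la \E(R^2)\bigr) - \la\sum_{k \ge 3}\frac{r^{k-1}}{k!}\E(R^k).
\]
On the other hand, reading \eqref{eq:rhoDR} as a defining equation, $\rhod(R)$ is the unique positive zero of
\[
G_R(r) := -\kap + \la\tet \E R + \la\eta \E(YR) - \tfrac{\la\eta}{2}\E(R^2) - \tfrac{r}{2}\bigl(\bet^2 + \la \E(R^2)\bigr),
\]
that is, $G_R = F_R + \la\sum_{k \ge 3}\frac{r^{k-1}}{k!}\E(R^k)$. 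Both $F_R$ and $G_R$ are strictly decreasing in $r > 0$, share the same value at $r = 0$ (positive by the net-profit condition), and the extra tail sum is strictly positive for $r > 0$ whenever $R$ is not $F_Y$-a.s.\ zero. Hence $F_R(r) < G_R(r)$ for all $r > 0$, which forces the positive zero of $F_R$ to lie strictly below that of $G_R$: $\rho(R) < \rhod(R)$.

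Second, I would apply this comparison to $R = \hRj$, the maximizer from Theorem \ref{thm:rhoj}. Since $\hRj(y) = y$ on $\bigl[0, \tfrac{1}{\rhoj}\ln(1+\tet)\bigr)$ when $\tet > 0$, and $\hRj(y) = R_c(\rhoj,y) > 0$ for $y > 0$ when $\tet = 0$, and since $F_Y$ has full support on $\R^+$, $\hRj$ is positive on a set of positive $F_Y$-measure, so the strict comparison applies. Chaining with the definition of $\rhod$ as the supremum of $\rhod(R)$ over admissible $R$ yields
\[
\rhoj = \rho(\hRj) < \rhod(\hRj) \le \sup_{R}\rhod(R) = \rhod = \alp^* - \eta,
\]
and the theorem follows.

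The only subtle point is to be sure the strict inequality $\rho(R) < \rhod(R)$ is preserved when we plug in the optimal $\hRj$; this reduces to checking that $\E(\hRj^k) > 0$ for some $k \ge 3$, which is immediate from the explicit form \eqref{eq:hRj} together with the full-support assumption on $F_Y$. No other step should require more than bookkeeping, so I do not expect a significant obstacle.
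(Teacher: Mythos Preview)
Your proposal is correct and follows essentially the same approach as the paper. Both arguments establish $\rho(R) < \rhod(R)$ for each admissible $R$ by exploiting the elementary inequality $e^x > 1 + x + \tfrac{1}{2}x^2$ for $x>0$ (you phrase it via the positive tail $\sum_{k\ge 3}\tfrac{r^{k-1}}{k!}\E(R^k)$, the paper phrases it as $M_R(\rhoj(R)) > 1 + \E(R)\rhoj(R) + \tfrac{1}{2}\E(R^2)\rhoj^2(R)$), and then both specialize to $R=\hRj$ and chain with $\rhod(\hRj)\le\rhod$.
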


\begin{proof}
To prove this theorem, we first show that, for an arbitrary retention function $R$ for which $\rhoj(R)$ exists, the following inequality holds:
\begin{equation}\label{ineq:1}
\rhoj(R) < \rhod(R).
\end{equation}
From \eqref{eq:rhoDR}, $\rhod(R)$ satisfies
\begin{equation}\label{eq:rdi}
\left[ - \kap + \la \left(\tet \E R + \eta \E(YR) - \frac{\eta}{2} \, \E \big(R^2 \big) \right) \right] - \frac{1}{2} \Big[ \bet^2 + \la \E \big(R^2 \big) \Big] r = 0.
\end{equation}
The left side of \eqref{eq:rdi} decreases with $r$; thus, if we show that the left side is positive when $r = \rhoj(R)$, then we will have proven inequality \eqref{ineq:1}.

The left side of \eqref{eq:rdi} is positive when $r = \rhoj(R)$ if and only if
\[
\left[ - \kap + \la \left(\tet \E R + \eta \E(YR) - \frac{\eta}{2} \, \E \big(R^2 \big) \right) \right] \rhoj(R) - \frac{1}{2} \Big[ \bet^2 + \la \E \big(R^2 \big) \Big] \rhoj^2(R) > 0,
\]
or equivalently, using the equation that $\rhoj(R)$ solves, namely, \eqref{eq:rhoJR},
\[
M_R \big(\rhoj(R) \big) > 1 + \E (R) \, \rhoj(R) + \dfrac{1}{2} \, \E\big(R^2\big) \rhoj^2(R).
\]
This inequality is true because $e^{x} > 1 + x + \frac{1}{2} \, x^2$ for $x > 0$.  Thus, we have shown that $\rhoj(R) < \rhod(R)$ for all retention functions for which $\rhoj(R)$ exists.

It follows that
\[
\rhoj = \rhoj(\hRj) < \rhod(\hRj) \le \rhod,
\]
in which $\hRj$ is the retention function given in \eqref{eq:hRj}.  Equation \eqref{eq:rhoD} shows that $\rhod = \alp^* - \eta$; thus, $e^{-(\alp^* - \eta)x} < e^{-\rhoj x}$ for all $x > 0$.
\end{proof}

\begin{remark}
Essentially Theorem {\rm \ref{thm:LundbergD}} follows from the fact that, for any reinsurance strategy, the adjustment coefficient under the diffusion approximation is greater than the adjustment coefficient for the classical risk process.  This inequality is pleasing because, due to its jumps, we expect a classical risk process to be ``riskier''  than its corresponding diffusion approximation, and inverse ordering of the corresponding adjustment coefficients confirms this intuitive ordering of riskiness. \qed
\end{remark}

\subsection{Scaled model}\label{sec:43}

In this section, we scale our model by $n > 0$. To obtain the scaled system, multiply the Poisson rate $\la$ by $n$, divide the claim severity by $\sqrtn$, and adjust the premium rate so that net premium income remains constant. Specifically, define $\lan = n \la$, so $n$ large is equivalent to $\lan$ large. Scale the claim severity by defining $\Yn = Y/\sqrtn$. Also, define $\tetn = \tet/\sqrtn$ and $\cn = c + ( \sqrtn -1 ) \la \E Y$, which implies $\cn - \lan \E \Yn = c - \la \E Y$, independent of $n$.  The parameters $\eta$ and $\bet$ remain unchanged.  Finally, define
\begin{align*}
\kap_n &= (1 + \tetn) \lan \E \big(\Yn\big) + \dfrac{\eta}{2} \,  \lan \E \big(\Yn^2 \big) - \cn \\
&= (1 + \tet) \la \E Y + \dfrac{\eta}{2} \,  \la \E \big(Y^2 \big) - c = \kap,
\end{align*}
also independent of $n$.

Let $\psin$ denote the minimum probability of ruin in the scaled system.  The main result of this section is that $\psi_D$ and $\psin$ are approximately equal, to order $\mO \big(n^{-1/2} \big)$.  In proving this, we also show that the maximum adjustment coefficient $\rhod$ for the diffusion approximation and the maximum adjustment coefficient for the scaled classical risk model perturbed by a diffusion $\rhojn$ are approximately equal, to order $\mO\big(n^{-1/2} \big)$, an interesting result in itself.

Consider the $R$-part of the drift of $\hX$ in \eqref{eq:Xhat}:
\begin{align}
&\lan \left( \tetn \E \Rn + \eta \E \big( \Yn \Rn \big) - \dfrac{\eta}{2} \, \E \big(\Rn^2 \big) \right) \notag \\
&= n \la \left( \dfrac{\tet}{\sqrtn} \, \E \Rn + \eta \E \bigg( \dfrac{Y}{\sqrtn} \, \Rn \bigg) - \dfrac{\eta}{2} \, \E \big( \Rn^2 \big) \right) \notag \\
&= \la \left(\tet \E \big(\sqrtn \Rn \big) + \eta \E \big( Y \, \sqrtn \Rn \big) - \dfrac{\eta}{2} \, \E \Big( \big( \sqrtn \Rn \big)^2 \Big) \right).  \label{eq:Xhat_drift}
\end{align}
Thus, we are motivated to define the retention function $\whRn$ by
\begin{equation}\label{eq:whRn}
\whRn (y) = \sqrtn \, \Rn \bigg( \dfrac{y}{\sqrtn} \bigg).
\end{equation}
Note that, if $\whRn$ is independent of $n$, then the drift of $\hX$ remains unchanged by the scaling; also, one can show that $R_D$ in \eqref{eq:RstarD} satisfies
\[
R_D(y) = \sqrtn \, R^{(n)}_D \bigg( \dfrac{y}{\sqrtn} \bigg),
\]
in which $R^{(n)}_D$ is the optimal retention function for the scaled diffusion approximation, specifically, with $\tet = \tetn$ in \eqref{eq:RstarD2}.

Let $\rhojn \big( \Rn \big)$ and $\rhodn \big( \Rn \big)$ denote the adjustment coefficients for the scaled classical risk model perturbed by a diffusion and for its diffusion approximation, respectively, under the retention function $\Rn$.  Because of the calculation in \eqref{eq:Xhat_drift} and the resulting assignment in \eqref{eq:whRn}, we deduce that
\begin{equation}\label{eq:rhodn}
\rhodn \big( \Rn \big) = \rhod \big( \whRn \big).
\end{equation}
We present the following theorem relating $\rhojn = \sup \limits_{\Rn} \rhojn \big( \Rn \big)$ and $\rhod = \sup \limits_R \rhod(R) = \sup \limits_{\whRn} \rhod \big( \whRn \big)$.

\begin{theorem}\label{thm:rhojn_lim}
Choose $C$ so that
\begin{equation}\label{eq:C}
C > \dfrac{1}{3} \, \dfrac{\la \E \big(R^3_D \big)}{\bet^2 + \la \E \big(R^2_D \big)} \, \rhod^2.
\end{equation}
Then, there exists $N > 0$ such that
\begin{equation}\label{eq:rhojn_rhod}
\rhod - \dfrac{C}{\sqrtn} < \rhojn < \rhod,
\end{equation}
for all $n > N$, from which it follows that
\begin{equation}\label{eq:rhojn_lim}
\lim \limits_{n \to \infty} \rhojn = \rhod,
\end{equation}
with rate of convergence of order $\mO \big( n^{-1/2} \big)$.
\end{theorem}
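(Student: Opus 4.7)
The plan is to prove the two-sided bound \eqref{eq:rhojn_rhod} directly; the limit \eqref{eq:rhojn_lim} at rate $\mO(n^{-1/2})$ then follows immediately. For the upper bound, I would apply the argument in the proof of Theorem~\ref{thm:LundbergD} to the scaled classical risk model: for any admissible $R_n$, one has $\rhojn(R_n) < \rhodn(R_n) = \rhod(\whRn) \le \rhod$, where the equality is \eqref{eq:rhodn} and the last inequality holds because $\whRn$ is admissible for the unscaled diffusion approximation (admissibility being preserved under the scaling, as can be checked directly from \eqref{eq:Xhat_drift}). Taking the supremum over $R_n$ yields $\rhojn \le \rhod$; the strict inequality on the right is then guaranteed by the quantitative gap built into the expansion below.

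For the lower bound, I would choose a concrete admissible strategy whose lifted retention equals the diffusion-optimizer $R_D$, namely,
\[
R_n^*(y) := \frac{R_D(\sqrtn\, y)}{\sqrtn}, \qquad \text{so that} \quad \whR_n^* = R_D,
\]
and derive a sharp expansion for $\rhojn(R_n^*)$. Taylor-expanding the moment generating function in the defining equation $j_n(r) = 0$ and using the scaling identities $\lan \E \bigl( (R_n^*)^k \bigr) = n^{1 - k/2}\, \la \E(R_D^k)$, the $\sqrtn$-divergent piece $\lan \E R_n^* \cdot r$ cancels exactly against the corresponding term in $\lan(1 + \tetn) \E R_n^* \cdot r$, and one is left with
\[
j_n(r) = \tfrac{1}{2} \bigl( \bet^2 + \la \E(R_D^2) \bigr) \bigl[ \rhod \cdot r - r^2 \bigr] - \tfrac{\la \E(R_D^3)}{6\sqrtn}\, r^3 - \tfrac{\la \E(R_D^4)}{24\, n}\, r^4 - \cdots ,
\]
where I have used Lemma~\ref{lem:RstarD} to identify the bracketed coefficient with $\rhod$. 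Setting $j_n\bigl(\rhojn(R_n^*)\bigr) = 0$ and dividing by $\rhojn(R_n^*) > 0$ gives
\[
\rhod - \rhojn(R_n^*) = \frac{1}{3\sqrtn} \, \frac{\la \E(R_D^3)}{\bet^2 + \la \E(R_D^2)} \, \bigl( \rhojn(R_n^*) \bigr)^2 + \mO(1/n) .
\]
Once $\rhojn(R_n^*) \to \rhod$ is known, the right side is strictly below $C/\sqrtn$ for any $C$ satisfying \eqref{eq:C}, provided $n$ is large enough, and then $\rhojn \ge \rhojn(R_n^*) > \rhod - C/\sqrtn$.

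The main obstacle is establishing the convergence $\rhojn(R_n^*) \to \rhod$ together with uniform control of the higher-order terms in the Taylor tail. For existence of the positive root $\rhojn(R_n^*)$, I would verify that $R_n^*$ is admissible in the scaled model --- equivalent, via the scaling relations, to admissibility of $R_D$ in the unscaled diffusion approximation, which holds by Theorem~\ref{thm:psiD} --- so that the net-profit condition \eqref{eq:jprime0} is in force. Condition \eqref{eq:r_infty} is ensured by $R_n^*(\Yn) \le \Yn = Y/\sqrtn$ together with the standing assumption that $\E e^{rY}$ exists in a neighborhood of $r = 0$; this places $\rhod$ safely inside the interior of the domain of $M_{R_n^*}$ for all sufficiently large $n$, and also provides a uniform tail bound for the Taylor series. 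Strict concavity of $j_n$ with $j_n(0) = 0$, $j_n'(0) > 0$, and $j_n(r) < 0$ near $r_\infty$ then yields a unique positive zero, and passing to $n \to \infty$ in $j_n(r) = 0$ forces $\rhojn(R_n^*) \to \rhod$. This closes the argument and simultaneously delivers the $\mO(n^{-1/2})$ convergence rate.
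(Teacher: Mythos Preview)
Your proof is correct and follows essentially the same route as the paper's: both obtain the upper bound from Theorem~\ref{thm:LundbergD} applied to the scaled model and the lower bound by testing the specific strategy $R_n^*(y) = R_D(\sqrtn\,y)/\sqrtn$ (so $\whR_n^* = R_D$) and Taylor-expanding the defining equation for its adjustment coefficient. The only cosmetic difference is that the paper plugs $r = \rhod - C/\sqrtn$ into the defining function and checks positivity, whereas you solve for $\rhojn(R_n^*)$ asymptotically and then compare --- your bootstrap step $\rhojn(R_n^*) \to \rhod$ can in fact be bypassed by using the already-established bound $\rhojn(R_n^*) < \rhod$ to replace $(\rhojn(R_n^*))^2$ by $\rhod^2$ in your displayed identity.
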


\begin{proof}
First, note that the limit in \eqref{eq:rhojn_lim} follows directly from the bounds in \eqref{eq:rhojn_rhod}.  Next, note that the second inequality in \eqref{eq:rhojn_rhod} follows from the proof of Theorem \ref{thm:LundbergD}.  Thus, to prove this theorem it is enough to prove the first inequality in \eqref{eq:rhojn_rhod}.

First, fix a retention function $\Rn$.  Because we assume $M_Y(r)$ exists in a neighborhood of $0$, $M_{\Rn}(r)$ also exists in a neighborhood of $0$. From \eqref{eq:rhoJR}, we know that $\rhojn \big(\Rn \big)$ solves
\[
\left[- \kapn + \lan \left( (1 + \tetn) \E \Rn + \eta \E \big(\Yn \Rn \big) - \frac{\eta}{2} \, \E\big( \Rn^2 \big) \right) \right] r - \frac{1}{2} \, \bet^2 r^2 - \lan \big( M_{\Rn}(r) - 1 \big) = 0,
\]
or equivalently, from \eqref{eq:whRn} and the scaling of the model,
\begin{equation}\label{eq:rhoJRn}
\left[- \kap + \la \left( \tet \E \whRn + \eta \E \big(Y \whRn \big) - \frac{\eta}{2} \, \E\big(\whRn^2\big) \right) \right] - \frac{1}{2} \, \bet^2 r - n \la \, \dfrac{ \E e^{\frac{r}{\sqrtn} \, \whRn} - 1 - \frac{r}{\sqrtn} \, \E \whRn}{r} = 0.
\end{equation}
The left side of \eqref{eq:rhoJRn} decreases with respect to $r$; thus, to show that $\rhod \big(\whRn \big) - C \big(\whRn \big)/\sqrtn < \rhojn \big( \Rn \big)$ for some $C\big(\whRn \big) > 0$, it is enough to show that the left side of \eqref{eq:rhoJRn} is positive at $r = \rhod \big( \whRn \big) - C\big( \whRn \big)/\sqrtn$, that is,
\begin{align*}
&\left[- \kap + \la \left( \tet \E \whRn + \eta \E \big(Y \whRn \big) - \frac{\eta}{2} \, \E\big(\whRn^2\big) \right) \right] - \frac{1}{2} \, \bet^2 \left( \rhod \big( \whRn \big) - \dfrac{C \big( \whRn \big)}{\sqrtn} \right)  \\
&\quad  - n \la \, \dfrac{\E \exp \left( \frac{\rhod \big( \whRn \big) - \frac{C \big( \whRn \big)}{\sqrtn}}{\sqrtn} \, \whRn \right) - 1 - \frac{\rhod \big( \whRn \big) - \frac{C \big( \whRn \big)}{\sqrtn}}{\sqrtn} \, \E \whRn}{\rhod \big( \whRn \big) - \frac{C \big( \whRn \big)}{\sqrtn}} > 0.
\end{align*}
By using the equation for $\rhod(R)$, namely, \eqref{eq:rhoDR}, and by expanding the exponential, we see that this inequality is equivalent to
\begin{align*}
\left( \Big( \bet^2 + \la \E\big(\whRn^2\big) \Big) C \big( \whRn \big) - \dfrac{\la}{3} \, \E\big(\whRn^3 \big) \big(\rhod \big( \whRn \big) \big)^2 \right) + \mO \big( n^{-1/2} \big) > 0,
\end{align*}
in which the $\mO \big( n^{-1/2} \big)$ term is finite for $n$ large enough because of the assumption that $M_{\Rn}(r)$ (and, hence, $M_{\whRn}(r)$) exists for $r$ in a neighborhood of $0$.  So, if we choose $C\big( \whRn \big)$ such that
\[
C\big( \whRn \big) > \dfrac{1}{3} \, \dfrac{\la \E \big(\whRn^3 \big)}{\bet^2 + \la \E \big(\whRn^2 \big)} \,  \big(\rhod\big( \whRn \big) \big)^2,
\]
then there exists $N\big( \whRn \big) > 0$ such that $n > N\big( \whRn \big)$ implies
\[
\rhod\big( \whRn \big) - \dfrac{C\big( \whRn \big)}{\sqrtn} < \rhojn \big( \Rn \big).
\]
Because $\rhod = \rhod \big( R_D \big)$, that is, the arg sup over $\whRn$ is independent of $n$, it follows that
\[
\rhod - \dfrac{C\big( R_D \big)}{\sqrtn} < \rhojn \Bigg( \dfrac{1}{\sqrtn} \, R_D \Bigg) \le \sup_{\Rn} \rhojn \big( \Rn \big) = \rhojn.
\]
Thus, we have proved the first inequality in \eqref{eq:rhojn_rhod}, and we have completed the proof of this theorem.
\end{proof}

We readily obtain the following corollary of this theorem, in which we modify $\psi_D(x) = e^{-\rhod \, x}$ by a function of order $\mO \big( n^{-1/2} \big)$ to obtain an upper bound of $\psin$.

\begin{corollary}\label{cor:psin_upper}
Let $C$ and $N$ be as in the statement of Theorem {\rm \ref{thm:rhojn_lim}}. Then, for all $n > N$,
\begin{equation}\label{eq:psin_upper}
\psin(x) < e^{- \left( \rhod - \frac{C}{\sqrtn} \right)x},
\end{equation}
for all $x > 0$.
\end{corollary}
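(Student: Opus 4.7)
The plan is to combine the classical Lundberg exponential upper bound with the lower bound on $\rhojn$ from Theorem \ref{thm:rhojn_lim}. The argument splits into two steps: (i) show that $\psin(x) \le e^{-\rhojn x}$ via the Lundberg martingale technique applied to the scaled classical risk model perturbed by a diffusion, and (ii) strictly bound this above by $e^{-(\rhod - C/\sqrtn)x}$ using the theorem.

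For step (i), fix any admissible, time-homogeneous, state-independent retention function $\Rn$ whose adjustment coefficient $r := \rhojn(\Rn) > 0$ exists as the positive root of the scaled analog of \eqref{eq:rhoJR}. A direct It\^o computation on the scaled dynamics \eqref{eq:X} shows that equation \eqref{eq:rhoJR} is precisely the condition that the drift of $e^{-rX_t}$ vanishes; being nonnegative, $\{e^{-rX_t}\}_{t \ge 0}$ is then a local martingale and hence a supermartingale. Applying optional stopping at $\tau_0 \wedge T$, and using $X_{\tau_0} \le 0$ on $\{\tau_0 < \infty\}$ so that $e^{-rX_{\tau_0}} \ge 1$, we obtain
\[
e^{-rx} = \E\big[e^{-rX_0}\big] \ge \E\big[e^{-rX_{\tau_0 \wedge T}}\big] \ge \mathbb{P}(\tau_0 \le T \mid X_0 = x).
\]
Letting $T \to \infty$ by monotone convergence yields the Lundberg bound $\mathbb{P}(\tau_0 < \infty \mid X_0 = x) \le e^{-\rhojn(\Rn) x}$. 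Since $\psin$ is an infimum over all admissible strategies, in particular over time-homogeneous, state-independent ones, taking the infimum on the right over such $\Rn$ and using $\rhojn = \sup_{\Rn} \rhojn(\Rn)$ gives $\psin(x) \le e^{-\rhojn x}$.

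For step (ii), Theorem \ref{thm:rhojn_lim} gives $\rhojn > \rhod - C/\sqrtn$ for all $n > N$; hence, for $x > 0$,
\[
\psin(x) \le e^{-\rhojn x} < e^{-(\rhod - C/\sqrtn) x},
\]
which is exactly \eqref{eq:psin_upper}. The main obstacle is the Lundberg inequality in step (i), which is not stated explicitly in the excerpt: one must verify the (super)martingale property carefully (the jump-compensator computation matching \eqref{eq:rhoJR} requires that $M_Y$, and hence $M_{\Rn}$ via $0 \le \Rn \le \Yn$, be finite in a neighborhood of $0$, an assumption carried throughout the paper) and handle the passage $T \to \infty$. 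Once this classical bound is in hand, the corollary is essentially immediate from Theorem \ref{thm:rhojn_lim}, with the strict inequality arising from the strict bound $\rhojn > \rhod - C/\sqrtn$ combined with $x > 0$.
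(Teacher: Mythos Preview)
Your proposal is correct and follows essentially the same route as the paper. The paper's proof is terser: it simply invokes the Lundberg bound to assert $\psin(x) \le \psi_{\hRjn}(x) < e^{-\rhojn x}$ (claiming the second inequality is strict) and then applies the first inequality in \eqref{eq:rhojn_rhod}. You supply the supermartingale/optional-stopping derivation of the Lundberg bound explicitly and extract the strict inequality instead from $\rhojn > \rhod - C/\sqrtn$; either source of strictness suffices, and the overall structure is the same.
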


\begin{proof}
For $n = 1$, let $\psi_{\hRj}$ denote the probability of ruin when the insurer follows the strategy that maximizes the adjustment coefficient; thus, $\psi(x) \le \psi_{\hRj}(x) < e^{- \rhoj \, x}$, in which the second inequality follows from the Lundberg bound.  It follows that, for $n \in \N$, $\psin(x) < e^{-\rhojn \, x}$ for all $x > 0$.  Inequality \eqref{eq:rhojn_rhod}, then, implies \eqref{eq:psin_upper}.
\end{proof}

In the next theorem, we look at the limit of the corresponding optimal retention strategies.  Let $\hRjn$ denote the optimal retention function to maximize $\rhojn \big( \Rn \big)$.

\begin{theorem}\label{thm:hRjn_RD}
\begin{equation}\label{eq:hRjn_RD}
\lim \limits_{n \to \infty} \sqrtn \, \hRjn \bigg( \dfrac{y}{\sqrtn} \bigg) = R_D(y),
\end{equation}
for all $y \ge 0$.
\end{theorem}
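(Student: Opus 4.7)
The plan is to combine the explicit formula for $\hRjn$ obtained by applying Theorem \ref{thm:rhoj} to the scaled model with the convergence $\rhojn \to \rhod$ established in Theorem \ref{thm:rhojn_lim}, and then identify the limit of $\sqrtn\,\hRjn(y/\sqrtn)$ case-by-case in $y$. Specifically, applying Theorem \ref{thm:rhoj} to the scaled model (with $\tet \to \tetn = \tet/\sqrtn$, $\la \to \lan$, and claim severity $Y/\sqrtn$), for each $n$ I would write
\[
\hRjn(y) =
\begin{cases}
y, & 0 \le y < \tau_n, \\
R_c^{(n)}(\rhojn, y), & y \ge \tau_n,
\end{cases}
\]
where $\tau_n := \rhojn^{-1}\ln(1+\tetn)$, and $R_c^{(n)}(r, y) \in [0, y]$ uniquely solves $(1+\tetn) + \eta y - \eta R - e^{rR} = 0$. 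A Taylor expansion of the logarithm together with Theorem \ref{thm:rhojn_lim} gives $\sqrtn\,\tau_n = \tet/\rhojn + \mO(n^{-1/2}) \to \tet/\rhod$, so the thresholds separating the two branches of $R_D(y) = \big((\tet+\eta y)/\alp^*\big) \wedge y$ and of $\sqrtn\,\hRjn(y/\sqrtn)$ coincide in the limit.

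With this preparation, for fixed $y \ge 0$ I would treat three cases. If $y < \tet/\rhod$, then for large $n$ we have $y/\sqrtn < \tau_n$ and $\sqrtn\,\hRjn(y/\sqrtn) = y$, which coincides with $R_D(y)$. If $y > \tet/\rhod$, then for large $n$ we have $y/\sqrtn > \tau_n$; setting $\tilde R_n := \sqrtn\, R_c^{(n)}(\rhojn, y/\sqrtn) \in [0, y]$ and substituting $R = \tilde R_n/\sqrtn$ into the defining equation, then multiplying by $\sqrtn$, yields
\[
H_n(\tilde R_n) := \sqrtn\bigl(1 - e^{\rhojn \tilde R_n/\sqrtn}\bigr) + \tet + \eta y - \eta \tilde R_n = 0.
\]
A second-order Taylor bound for the exponential on the compact interval $[0, y]$, combined with $\rhojn \to \rhod$, gives uniform convergence of $H_n$ on $[0, y]$ to the strictly monotone linear function $H_\infty(\tilde R) := \tet + \eta y - \alp^*\tilde R$, whose unique zero is $(\tet+\eta y)/\alp^* = R_D(y)$. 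Bolzano--Weierstrass plus a standard subsequence-and-uniqueness argument then forces $\tilde R_n \to R_D(y)$. The boundary case $y = \tet/\rhod$ follows by continuity, since both branches of $R_D$ agree there.

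I expect the main obstacle to be the case $y > \tet/\rhod$, where $\tilde R_n$ is defined only implicitly through the nonlinear equation $H_n(\tilde R_n) = 0$ involving $e^{\rhojn \tilde R_n/\sqrtn}$, and one cannot simply ``plug in'' the limit. The key maneuver is to reformulate $\tilde R_n$ as a zero of $H_n$, use the a priori bound $\tilde R_n \in [0, y]$ to extract subsequential limits, and exploit uniform convergence $H_n \to H_\infty$ on $[0, y]$---driven entirely by Theorem \ref{thm:rhojn_lim} and the boundedness of $\tilde R_n$---to identify each subsequential limit as the unique zero $R_D(y)$ of the linear limit $H_\infty$.
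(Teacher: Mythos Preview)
Your proposal is correct and follows essentially the same route as the paper: identify the threshold convergence $\sqrtn\,\tau_n \to \tet/\rhod$, handle the identity branch trivially, and for the $R_c$ branch rescale the defining equation \eqref{eq:Rc} and Taylor-expand the exponential to read off the linear limit equation with unique root $(\tet+\eta y)/\alp^* = R_D(y)$. The paper simply writes the expansion $-(\rhojn+\eta)(\sqrtn R_c)+\tet+\eta y + \mO(n^{-1/2})=0$ and passes to the limit directly, implicitly using the a priori bound $\sqrtn R_c \in [0,y]$; your uniform-convergence-plus-compactness formulation of the same step is a slightly more explicit version of that argument. One small point: your ``by continuity'' justification at $y=\tet/\rhod$ is loose---what actually makes the boundary case work is that along any subsequence you are in one branch or the other, and both branch-limits coincide with $R_D(y)=y$ there.
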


\begin{proof}
Note that
\[
\dfrac{y}{\sqrtn} \ge \dfrac{1}{\rhojn} \, \ln \big(1 + \tetn \big)
\]
if and only if
\begin{equation}\label{eq:y_lim}
y \ge \dfrac{\sqrtn}{\rhojn} \, \ln \big(1 + \tetn \big),
\end{equation}
and the right side goes to $\frac{\tet}{\rhod}$ as $n \to \infty$.  From \eqref{eq:Rc}, for $y \ge \frac{\sqrtn}{\rhojn} \, \ln \big(1 + \tetn \big)$,
\[
\left(1 + \dfrac{\tet}{\sqrtn} \right) + \eta \, \dfrac{y}{\sqrtn} - \eta R_c \bigg( \rhojn, \, \dfrac{y}{\sqrtn} \bigg) - \exp \left( \rhojn R_c\bigg( \rhojn, \, \dfrac{y}{\sqrtn} \bigg)\right) = 0,
\]
which, after expanding the exponential, implies
\[
- \big(\rhojn + \eta \big) \big( \sqrtn \, R_c \big) + \tet + \eta y + \mO \big(n^{-1/2} \big) = 0.
\]
By taking a limit as $n \to \infty$, we obtain
\begin{equation}\label{eq:Rc_lim}
\lim_{n \to \infty} \sqrtn \, R_c \bigg( \rhojn, \, \dfrac{y}{\sqrtn} \bigg) = \dfrac{\tet + \eta y}{\rhod + \eta} \, .
\end{equation}
Equation \eqref{eq:hRjn_RD}, then, follows from the expression for $\hRjn$ in \eqref{eq:hRj}, from the expression for $R_D$ in \eqref{eq:RstarD2}, from $\rhod = \alpha^* - \eta$, from \eqref{eq:y_lim}, and from \eqref{eq:Rc_lim}.
\end{proof}

We end this section with an important result, namely, that as $n \to \infty$, $\psin$ converges to $\psi_D$ uniformly with respect to $x$ and with rate of convergence of order $\mO \big(n^{-1/2} \big)$.  In Corollary \ref{cor:psin_upper}, we modified $\psi_D$ by a function of order $\mO \big( n^{-1/2} \big)$ to obtain an {\it upper} bound of $\psin$.  In the next proposition, we modify $\psi_D$ by a function of order $\mO \big( n^{-1/2} \big)$ to obtain a {\it lower} bound of $\psin$.  After that, we show how convergence follows from these two bounds.

\begin{proposition}\label{prop:psin_lower}
Formally, define the random variable $\Zd = (Y - d) \big| (Y > d)$ for $d \ge 0$, and suppose $m$ exists such that $M_Y \big(\rhod/\sqrt{m} \, \big) < \infty$, with
\begin{equation}\label{eq:Zd}
\sup \limits_{d \ge 0} \E \Big( \Zd^2 \, e^{\frac{\rhod}{\sqrt{m}} \Zd} \Big) < \infty.
\end{equation}
Choose $\eps > 0$, and define $\del$ by
\begin{equation}\label{eq:del}
\del = \sup \limits_{d \ge 0} \big( \rhod \E \Zd + \eps \big),
\end{equation}
and choose $N > \max \big( \del^2, m \big)$ such that
\begin{equation}\label{eq:N}
\sup \limits_{d \ge 0} \dfrac{\rhod^2}{\sqrt{N}} \int_0^1 (1 - \ome) \E\Big( \Zd^2 \, e^{\frac{\rhod \ome}{\sqrt{N}} \Zd} \Big) d \ome \le \eps.
\end{equation}
Then, for all $n > N$,
\begin{equation}\label{eq:psio_scale}
\left( 1 - \dfrac{\del}{\sqrtn} \right) \psi_D(x) \le \psin(x),
\end{equation}
for all $x > 0$.
\end{proposition}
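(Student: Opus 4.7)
The plan is to prove the bound pointwise for each admissible strategy and then take the infimum.  Fix an admissible retention strategy $\mR_n = \{R^n_t\}$ for the scaled classical risk model and write $\psi^{\mR_n}_n$ for the corresponding probability of ruin; it suffices to show $\psi^{\mR_n}_n(x) \ge (1 - \del/\sqrtn)\, e^{-\rhod x}$, which on taking the infimum over admissible strategies yields \eqref{eq:psio_scale}.  The argument has three pieces: a submartingale estimate for $u(x) = e^{-\rhod x}$, passage to the limit $t \to \infty$, and a Taylor-type control of the overshoot at ruin.

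Applying It\^o's formula with jumps to $u(X^{(n)}_t)$, a computation analogous to the one used in the proof of Theorem \ref{thm:LundbergD} (but with the generator of the scaled perturbed classical process) gives $\mL^{R^n_t} u(x) = -e^{-\rhod x}\, j_n(\rhod;\, R^n_t)$, where $j_n(\cdot;R)$ is the concave function on the left side of \eqref{eq:rhoJRn} whose positive root is $\rhojn(R)$.  Since $\rhojn(R^n_t) \le \rhojn < \rhod$ by Theorem \ref{thm:rhojn_lim}, we have $j_n(\rhod;R^n_t) \le 0$, so $\mL^{R^n_t} u \ge 0$.  Hence $u(X^{(n)}_{t \wedge \tau_0})$ is a submartingale and $e^{-\rhod x} \le \E[u(X^{(n)}_{t \wedge \tau_0})]$ for every $t \ge 0$.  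Splitting this expectation over $\{\tau_0 \le t\}$ and $\{\tau_0 > t\}$: on $\{\tau_0 = \infty\}$, the net-profit condition forces $X^{(n)}_t \to \infty$ almost surely, so $u(X^{(n)}_t) \to 0$, and since $u \le 1$ on $\R^+$, bounded convergence gives
\[
e^{-\rhod x} \;\le\; \E\bigl[e^{\rhod |X^{(n)}_{\tau_0}|}\, \mathbf{1}_{\tau_0 < \infty}\bigr].
\]

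The crux is to show that this right side is bounded by $(1 + \del/\sqrtn)\, \psi^{\mR_n}_n(x)$.  Ruin occurs either by a diffusion crossing (in which case $X^{(n)}_{\tau_0} = 0$ and the integrand equals $1$) or by a claim jump.  Conditioning on $\mF_{\tau_0^-}$ at a ruining jump with pre-jump surplus $d \ge 0$, retention $R = R^n_{\tau_0^-}$, and incoming scaled claim $Y/\sqrtn$: admissibility and $R(y) \le y$ force $Y > d\sqrtn$, and the overshoot $R(Y/\sqrtn) - d$ is dominated by $(Y - d\sqrtn)/\sqrtn$, whose conditional law given $Y > d\sqrtn$ is $Z_{d\sqrtn}/\sqrtn$.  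Expanding
\[
e^{(\rhod/\sqrtn) z} \;=\; 1 + (\rhod/\sqrtn)\, z + (\rhod/\sqrtn)^2 \int_0^1 (1-w)\, z^2\, e^{(\rhod/\sqrtn) w z}\, dw
\]
at $z = Z_{d\sqrtn}$ and invoking \eqref{eq:Zd}, \eqref{eq:del}, and \eqref{eq:N} bound $\E[e^{\rhod Z_{d\sqrtn}/\sqrtn}] \le 1 + (\rhod\, \E Z_{d\sqrtn} + \eps)/\sqrtn \le 1 + \del/\sqrtn$, uniformly in $d \ge 0$ and for all $n > N$.  Aggregating these conditional jump contributions via the Poisson jump structure, together with the trivial diffusion-ruin contribution of $\mathbb{P}(\text{diffusion ruin at }\tau_0)$, produces the claimed inequality; combined with the submartingale estimate it gives $\psi^{\mR_n}_n(x) \ge e^{-\rhod x}/(1 + \del/\sqrtn) \ge (1 - \del/\sqrtn)\, e^{-\rhod x}$.

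The main obstacle is the aggregation step: the pointwise bound $R(y) \le y$ only controls the conditional expected overshoot on $\{R(Y/\sqrtn) > d\}$ in terms of the larger event $\{Y > d\sqrtn\}$, so one must carefully account for the ruining jump (via the strong Markov property and the compensator of the jump martingale) to ensure that the total overshoot contribution is proportional to $\psi^{\mR_n}_n(x)$ rather than to a strictly larger quantity such as the expected number of claims exceeding the current surplus before ruin.
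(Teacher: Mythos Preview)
Your approach is genuinely different from the paper's, and the gap you flag in the final paragraph is real and, as far as I can see, not fixable along the lines you sketch. The paper does \emph{not} argue strategy-by-strategy via a submartingale plus an overshoot estimate. Instead it defines
\[
\elln(x) =
\begin{cases}
1, & x \le 0, \\
\bigl(1 - \del/\sqrtn\bigr)\, e^{-\rhod x}, & x > 0,
\end{cases}
\]
verifies that $\elln$ is a viscosity subsolution of the scaled HJB equation (i.e.\ $F_n(\elln) \le 0$), and then applies the comparison principle to conclude $\elln \le \psin$. The cap at $1$ on $(-\infty,0]$ absorbs the overshoot into the boundary value, and the factor $(1-\del/\sqrtn)$ is precisely what is needed to make the HJB inequality close. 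Because the HJB operator contains an infimum over $R$, it suffices to check the pointwise inequality only for the \emph{minimizing} retention function on the right-hand side, and that minimizer is strictly increasing with slope at most $1$; it is this structural fact that permits the replacement of the conditioning event by $\{Y>d\sqrtn\}$ and hence the use of the $Z_d$ bound.

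Your route, by contrast, needs the conditional overshoot estimate to hold for \emph{every} admissible retention $R$, not just the nice minimizer. But for a retention such as $R(y)=y\cdot\mathbf{1}_{\{y>M\}}$ (retain only large claims), the conditioning event $\{R(Y_n)>d\}$ forces $Y_n>M$, so the overshoot is at least $M-d$, which can be made arbitrarily large; the bound $\E[e^{\rhod|X_{\tau_0}|}\mid \tau_0<\infty]\le 1+\del/\sqrtn$ then fails outright. Even if such extreme strategies are excluded by the net-profit condition, non-monotone retentions still break the inclusion $\{R(Y_n)>d\}\subseteq\{Y_n>d\}$ in the direction you need, and the ``aggregation via the compensator'' you allude to gives a bound proportional to $\E\bigl[\sum_i \mathbf{1}_{\{Y_{n,i}>X_{T_i-}\}}\bigr]$ rather than to $\mathbb{P}(\tau_0<\infty)$. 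The cleanest repair is to replace your test function $e^{-\rhod x}$ by the paper's $\elln$; then $\elln(X^{(n)}_{\tau_0})=1$ on ruin, the overshoot disappears, and the submartingale verification reduces exactly to the HJB-subsolution computation the paper carries out---which, as noted, only needs to be checked for the minimizing (hence monotone, slope-$\le 1$) retention.
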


\begin{proof}
Without loss of generality, assume $n > \del^2$, and define the function $\elln$ on $\R$ by
\begin{equation}\label{eq:elln}
\elln(x) =
\begin{cases}
1, &\quad x \le 0, \\
\left( 1 - \dfrac{\del}{\sqrtn} \right) e^{-\rhod x},   &\quad x > 0.
\end{cases}
\end{equation}
It follows that $\elln(x) = 1 = \psin(x)$ for all $x \le 0$, and $\lim \limits_{x \to \infty} \elln(x) = 0 = \lim \limits_{x \to \infty} \psin(x)$.   Thus, by Theorem \ref{thm:comp} (using $\elln$ in the role of an upper semi-continuous subsolution), to show that $\elln(x) \le \psin(x)$ for $x > 0$, it enough to show, for all $x > 0$,
\begin{equation}\label{eq:Fn_elln_neg}
\Fn \big(x, \elln(x), \elln'(x), \elln''(x), \elln(\cdot) \big) \le 0,
\end{equation}
in which $\Fn$ denotes the operator in \eqref{eq:F} corresponding to the scaled model.  Recall that \hfill \break $\sqrtn \, \Rn(y/\sqrtn \, ) = \whRn(y)$. For $x > 0$,
\begin{align}\label{eq:Fn_elln}
&\Fn \big( x, \elln(x), \elln'(x), \elln''(x), \elln(\cdot) \big) = \kap \elln'(x) - \dfrac{1}{2} \, \bet^2 \elln''(x) \notag \\
&\quad - n\la \inf \limits_{\Rn} \left\{ \left( \left( 1 + \dfrac{\tet}{\sqrtn} \right) \E \Rn + \eta \E \bigg( \dfrac{Y}{\sqrtn} \, \Rn \bigg) - \dfrac{\eta}{2} \, \E \big( \Rn^2 \big) \right) \elln'(x) + \E \elln \big( x - \Rn \big) - \elln(x) \right\} \notag \\
&= \kap \elln'(x) - \dfrac{1}{2} \, \bet^2 \elln''(x) \notag \\
&\quad - \la \inf \limits_{\whRn} \left\{ \left( \big( \sqrtn + \tet \big) \E \whRn + \eta \E \big( Y \whRn \big) - \dfrac{\eta}{2} \, \E \big( \whRn^2 \big) \right) \elln'(x) + n \E \elln \bigg( x - \dfrac{\whRn}{\sqrtn} \bigg) - n \elln(x) \right\} \notag \\
&= - \left( 1 - \dfrac{\del}{\sqrtn} \right) \kap \rhod e^{-\rhod \, x} - \dfrac{1}{2} \left( 1 - \dfrac{\del}{\sqrtn} \right) \bet^2 \rhod^2 e^{-\rhod \, x} \notag \\
&\quad - \la \left( 1 - \dfrac{\del}{\sqrtn} \right) e^{-\rhod \, x} \inf \limits_{\whRn} \Bigg\{ - \left( \big( \sqrtn + \tet \big) \E \whRn + \eta \E \big( Y \whRn \big) - \dfrac{\eta}{2} \, \E \big( \whRn^2 \big) \right) \rhod \notag  \\
&\qquad \qquad \qquad \qquad \qquad \qquad \left. + n \int_0^\infty \left[ e^{\frac{\rhod}{\sqrtn} \, \whRn(y)} {\bf 1}_{\{\whRn(y) \le \sqrtn x \}} + \dfrac{e^{\rhod x}}{1 - \frac{\del}{\sqrtn}} \, {\bf 1}_{\{ \whRn(y) > \sqrtn x \}} - 1 \right] dF_Y(y) \right\} \notag \\
&\propto - \kap \rhod - \dfrac{1}{2} \, \bet^2 \rhod^2 - \la \inf \limits_{\whRn} \Bigg\{ - \left( \big( \sqrtn + \tet \big) \E \whRn + \eta \E \big( Y\whRn \big) - \dfrac{\eta}{2} \, \E \big( \whRn^2 \big) \right) \rhod \notag \\
&\qquad \qquad \qquad \qquad \qquad \qquad + n \int_0^\infty \left[ e^{\frac{\rhod}{\sqrtn} \, \whRn(y)} {\bf 1}_{\{\whRn(y) \le \sqrtn x \}} + \dfrac{e^{\rhod x}}{1 - \frac{\del}{\sqrtn}} \,  {\bf 1}_{\{ \whRn(y) > \sqrtn x \}} - 1 \right] dF_Y(y) \Bigg\}.
\end{align}

Recall that $\rhod$ solves
\[
- \kap \rhod - \dfrac{1}{2} \, \bet^2 \rhod^2 = \la \inf \limits_R \left\{ - \left( \tet \E R + \eta \E \big( YR \big) - \dfrac{\eta}{2} \, \E \big( R^2 \big) \right) \rhod + \dfrac{1}{2} \, \E \big(R^2 \big) \rhod^2 \right\}.
\]
Thus, inequality \eqref{eq:Fn_elln_neg} holds if and only if
\begin{align}\label{eq:Fn_elln_neg2}
&\inf \limits_{\whRn} \left\{ - \left( \tet \E \whRn + \eta \E \big( Y\whRn \big) - \dfrac{\eta}{2} \, \E \big( \whRn^2 \big) \right) \rhod + \dfrac{1}{2} \, \E \big(\whRn^2 \big) \rhod^2 \right\} \\
&\le \inf \limits_{\whRn} \Bigg\{ - \left( \tet \E \whRn + \eta \E \big( Y\whRn \big) - \dfrac{\eta}{2} \, \E \big( \whRn^2 \big) \right) \rhod \notag \\
&\quad \qquad + n \int_0^\infty \left[ e^{\frac{\rhod}{\sqrtn} \, \whRn(y)} {\bf 1}_{\{\whRn(y) \le \sqrtn x \}} + \dfrac{e^{\rhod x}}{1 - \frac{\del}{\sqrtn}} \, {\bf 1}_{\{ \whRn(y) > \sqrtn x \}} - 1 - \dfrac{\rhod \whRn(y)}{\sqrtn} \right] dF_Y(y) \Bigg\}. \notag
\end{align}
To show \eqref{eq:Fn_elln_neg2}, consider a fixed retention function $\whRn$; we wish to show that this inequality holds for $\whRn$, or equivalently,
\begin{equation}\label{eq:Fn_elln_neg3}
\dfrac{1}{2} \, \E \big(\whRn^2 \big) \rhod^2 \le n \int_0^\infty \left[ e^{\frac{\rhod}{\sqrtn} \, \whRn(y)} {\bf 1}_{\{\whRn(y) \le \sqrtn x \}} + \dfrac{e^{\rhod x}}{1 - \frac{\del}{\sqrtn}} \, {\bf 1}_{\{ \whRn(y) > \sqrtn x \}} - 1 - \dfrac{\rhod \whRn(y)}{\sqrtn} \right] dF_Y(y).
\end{equation}
The optimal retention function for the left side of \eqref{eq:Fn_elln_neg2} is $R_D$ in \eqref{eq:RstarD}, which is strictly increasing with respect to $y$ with rate of increase less than $1$ and $R_D$ increases without bound.  Moreover, via a proof similar to the proof of Theorem \ref{thm:rhoj}, one can show that the optimal retention function for the right side of  \eqref{eq:Fn_elln_neg2} satisfies those same properties.  It follows that, without loss of generality, we can assume $\whRn$ in \eqref{eq:Fn_elln_neg3} is strictly increasing with respect to $y$ with rate of increase less than $1$, and $\whRn$ increases without bound.  Thus, define $\ynx$ by
\begin{equation}\label{eq:ynx}
\ynx = \whRn^{-1} (\sqrtn \, x),
\end{equation}
and inequality \eqref{eq:Fn_elln_neg3} becomes
\[
\dfrac{1}{2} \, \E \big(\whRn^2 \big) \rhod^2 \le n  \int_0^\infty  \left( e^{\frac{\rhod \whRn(y)}{\sqrtn}} - 1 - \dfrac{\rhod \whRn(y)}{\sqrtn} \right) dF_Y(y) + n\int_{\ynx}^\infty \left( \dfrac{e^{\rhod x}}{1 - \frac{\del}{\sqrtn}} - e^{\frac{\rhod \whRn(y)}{\sqrtn}} \right) dF_Y(y).
\]
Because $e^x > 1 + x + x^2/2$ for all $x > 0$, we know that
\[
\dfrac{1}{2} \, \E \big(\whRn^2 \big) \rhod^2 \le n  \int_0^\infty  \left( e^{\frac{\rhod \whRn(y)}{\sqrtn}} - 1 - \dfrac{\rhod \whRn(y)}{\sqrtn} \right) dF_Y(y).
\]

It remains to find values of $\del$ and $N > \del^2$ for which
\[
\int_{\ynx}^\infty \left( \dfrac{e^{\rhod x}}{1 - \frac{\del}{\sqrtn}} - e^{\frac{\rhod \whRn(y)}{\sqrtn}} \right) dF_Y(y) \ge 0,
\]
for all $n > N$ and for all $x > 0$.  If $S_Y \big(\ynx \big) = 0$, then the left side is identically $0$, so suppose that $S_Y \big(\ynx \big) > 0$.  After replacing $\ynx$ by $d$ and dividing by $e^{\rhod x} S_Y(d)$, the above inequality becomes
\[
\int_d^\infty \left( \dfrac{1}{1 - \frac{\del}{\sqrtn}} - e^{ \frac{\rhod}{\sqrtn}\big(\whRn(y) - \whRn(d) \big)} \right) \dfrac{dF_Y(y)}{S_Y(d)} \ge 0,
\]
for $d \ge 0$, or equivalently,
\[
\int_d^\infty \left( e^{ \frac{\rhod}{\sqrtn}\big(\whRn(y) - \whRn(d)\big)} - 1 \right) \dfrac{dF_Y(y)}{S_Y(d)} \le \dfrac{\frac{\del}{\sqrtn}}{1 - \frac{\del}{\sqrtn}} \, .
\]
Because, without loss of generality, $0 \le \whRn(y) - \whRn(d) \le y - d$ for all $y \ge d$, it is enough to find $\del$ and $N > \del^2$ (with $N$ independent of $d$) so that the following stronger inequality holds:
\[
\int_d^\infty \left( e^{ \frac{\rhod}{\sqrtn}(y - d)} - 1 \right) \dfrac{dF_Y(y)}{S_Y(d)} \le \dfrac{\frac{\del}{\sqrtn}}{1 - \frac{\del}{\sqrtn}} \, ,
\]
for all $n > N$.  To that end, define $\Zd = (Y - d) \big| (Y > d)$; then, this stronger inequality becomes
\[
\int_0^\infty \left(  e^{ \frac{\rhod z}{\sqrtn}} - 1 \right) dF_{\Zd}(z) \le \dfrac{\frac{\del}{\sqrtn}}{1 - \frac{\del}{\sqrtn}} \, .
\]
If we find $\del$ that satisfies the following even stronger inequality, then the above sequence of inequalities holds:
\begin{equation}\label{ineq:del1}
\int_0^\infty \left(  e^{ \frac{\rhod z}{\sqrtn}} - 1 \right) dF_{\Zd}(z) \le \dfrac{\del}{\sqrtn} \, .
\end{equation}
Rewrite the integrand from the left side of inequality \eqref{ineq:del1} as follows:
\[
e^{ \frac{\rhod z}{\sqrtn}} - 1 = \dfrac{\rhod z}{\sqrtn} + \dfrac{\rhod^2 z^2}{n} \int_0^1 (1 - \ome) e^{\frac{\rhod z}{\sqrtn} \, \ome} d\ome.
\]
Thus, inequality \eqref{ineq:del1} is equivalent to
\[
\int_0^\infty \left(  \dfrac{\rhod z}{\sqrtn} + \dfrac{\rhod^2 z^2}{n} \int_0^1 (1 - \ome) e^{\frac{\rhod z}{\sqrtn} \, \ome} d \ome \right) dF_{\Zd}(z) \le \dfrac{\del}{\sqrtn} \, ,
\]
or, after multiplying both side by $\sqrtn$ and switching the order of integration,
\begin{equation}\label{ineq:del2}
\rhod \E \Zd + \dfrac{\rhod^2}{\sqrtn} \int_0^1 (1 - \ome) \, \E \Big( \Zd^2 \, e^{\frac{\rhod \ome}{\sqrt{n}} \Zd} \Big) d \ome \le \del.
\end{equation}
Note that the left side decreases with increasing $n$.  It follows that if we define $\del$ and $N$ as in \eqref{eq:del} and \eqref{eq:N}, respectively, then inequality \eqref{ineq:del2} holds for all $d \ge 0$ and all $n > N$, which implies that $F_n$ evaluated at $\elln$ is non-positive for all $x > 0$ and all $n > N$.  The conclusion in \eqref{eq:psio_scale}, then, follows from Theorem \ref{thm:comp}.
\end{proof}

In the following theorem, we combine the results of Corollary \ref{cor:psin_upper} and Proposition \ref{prop:psin_lower}.

\begin{theorem}\label{thm:psin_lim}
If \eqref{eq:Zd} holds,
then there exist $C' > 0$ and $N > 0$ such that, for all $n > N$ and $x \ge 0$,
\begin{align}\label{ASAF1}
\big| \psin(x) - \psi_D(x) \big| \le \dfrac{C'}{\sqrtn} \, .
\end{align}
Recall from \eqref{eq:psiD2} that $\psi_D(x) = e^{-\rhod x}$, with $\rhod$ solving \eqref{eq:rhoD}.
\end{theorem}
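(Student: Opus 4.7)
The plan is to sandwich $\psin(x)$ around $\psi_D(x) = e^{-\rhod x}$ using the two bounds that have already been established: the upper bound in Corollary \ref{cor:psin_upper} and the lower bound in Proposition \ref{prop:psin_lower}. Having done that, I need to verify that each side of the sandwich deviates from $\psi_D$ by at most a constant times $1/\sqrtn$, uniformly in $x \ge 0$.

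The lower direction is essentially free. Under hypothesis \eqref{eq:Zd}, Proposition \ref{prop:psin_lower} gives, for $n > N$,
\[
\psin(x) - \psi_D(x) \ge -\frac{\del}{\sqrtn}\, e^{-\rhod x} \ge -\frac{\del}{\sqrtn},
\]
which is already of order $\mO\big(n^{-1/2}\big)$ uniformly in $x$, with no further work required.

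The upper direction is where the real content lies. Corollary \ref{cor:psin_upper} yields
\[
\psin(x) - \psi_D(x) < e^{-(\rhod - C/\sqrtn)x} - e^{-\rhod x} = e^{-\rhod x}\bigl(e^{Cx/\sqrtn} - 1\bigr),
\]
and the obstacle I anticipate is that the factor $e^{Cx/\sqrtn} - 1$ is \emph{not} uniformly small in $x$, so one cannot simply bound it by $Cx/\sqrtn$ times a constant for all $x$. The idea is to treat
\[
\phi_n(x) := e^{-a_n x} - e^{-\rhod x}, \qquad a_n := \rhod - C/\sqrtn,
\]
as a nonnegative function on $\R^+$ vanishing at $0$ and at $\infty$, so for $n$ large enough that $a_n > 0$ it attains its maximum at the interior critical point $x_n^* = \ln(\rhod/a_n)/(\rhod - a_n)$. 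A direct computation gives
\[
\max_{x \ge 0}\phi_n(x) = \left(\frac{\rhod}{a_n}\right)^{-a_n/(\rhod - a_n)} \cdot \frac{\rhod - a_n}{\rhod},
\]
and since $\rhod - a_n = C/\sqrtn \to 0^+$, a first-order Taylor expansion shows that the first factor converges to $e^{-1}$ while the second equals $C/(\rhod\sqrtn)$. Hence there exist $C'' > 0$ and $N' > 0$ such that $\phi_n(x) \le C''/\sqrtn$ for all $x \ge 0$ and all $n > N'$.

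Combining the two directions, set $C' := \max(\del, C'')$ and enlarge $N$ to exceed $N'$; then $|\psin(x) - \psi_D(x)| \le C'/\sqrtn$ for all $x > 0$, while the boundary case $x = 0$ is immediate from $\psin(0) = \psi_D(0) = 1$. The main obstacle, then, is the elementary but careful maximization of $\phi_n$ together with the verification that its peak decays at rate $\mO\big(n^{-1/2}\big)$; once that is in hand, the theorem follows by direct assembly of the earlier sandwich estimates.
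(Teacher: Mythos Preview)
Your proposal is correct and follows essentially the same approach as the paper's proof: both arguments sandwich $\psin(x)-\psi_D(x)$ using Proposition~\ref{prop:psin_lower} for the lower bound (which gives $-\del/\sqrtn$ directly) and Corollary~\ref{cor:psin_upper} for the upper bound, then maximize $e^{-a_n x}-e^{-\rhod x}$ over $x\ge 0$ via elementary calculus to obtain the same critical value, observing that one factor tends to $e^{-1}$ while the other equals $C/(\rhod\sqrtn)$. The only cosmetic difference is that the paper writes the maximum as $\big(1-\tfrac{C}{\rhod\sqrtn}\big)^{\rhod\sqrtn/C}\cdot \tfrac{C/\sqrtn}{\rhod-C/\sqrtn}$, which is algebraically identical to your expression.
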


\begin{proof}
From Corollary \ref{cor:psin_upper} and Proposition \ref{prop:psin_lower} it follows that
$$
\left(1 - \dfrac{\del}{\sqrtn}\right) e^{-\rhod x} < \psin(x) < e^{-\left(\rhod - \frac{C}{\sqrtn} \right)x}.
$$
Subtracting $e^{-\rhod x}$ from each side yields,
\begin{align}\notag
- \, \dfrac{\del}{\sqrtn} \, e^{-\rhod x} < \psin(x) - e^{-\rhod x} < e^{-\left(\rhod - \frac{C}{\sqrtn} \right)x} - e^{-\rhod x}.
\end{align}
Clearly, the left side is bounded below by $-\del/\sqrtn$. Basic calculus implies that, for every $n > \big(C/\rhod \big)^2$, the right side is bounded above by
$$
\left(1 - \frac{C}{\rhod \sqrtn} \right)^{\frac{\rhod \sqrtn}{C}} \left(\frac{\frac{C}{\sqrtn}}{{\rhod - \frac{C}{\sqrtn}}} \right).
$$
The first factor converges to $e^{-1}$, and the second factor is of order $\mO\big(n^{-1/2} \big)$. By combining this upper bound with the lower bound, we deduce inequality \eqref{ASAF1}.
\end{proof}

\begin{remark}
Theorem {\rm \ref{thm:psin_lim}} partially justifies using the diffusion approximation for the classical risk process when $\la$ is large.  However, it does {\rm not} prove that the optimal retention strategy for $\psin$ converges to $R_D$, as we showed in Theorem {\rm \ref{thm:hRjn_RD}} when maximizing the adjustment coefficient.  If one were to show the limit in \eqref{eq:hRjn_RD} also holds for the optimal retention strategy for $\psin$, then that would fully justify using the diffusion approximation for the classical risk process when $\la$ is large.  \qed
\end{remark}

We end this section with a corollary that estimates the degree to which optimal reinsurance decreases the probability of ruin.  From Theorem 4.1 in Cohen and Young \cite{CY2019}, adapted to our model with the diffusion perturbation, we know that, if the analog of \eqref{eq:Zd} holds in the uncontrolled case, then there exists $D > 0$ and $M > 0$ such that, for all $n > M$ and $x \ge 0$,
\begin{align}\label{ASAF1_2}
\left| \psin^0(x) - e^{- \rhod(Y) x} \right| \le \dfrac{D}{\sqrtn} \, ,
\end{align}
in which $\psin^0$ denotes the uncontrolled probability of ruin in the scaled model, and
\begin{equation}\label{eq:gam}
\rhod(Y) = 2 \, \dfrac{c - \la \E Y}{\la \E \big(Y^2 \big) + \bet^2} \, .
\end{equation}
Equations \eqref{eq:adjcoeff} and \eqref{eq:rhoDR} show that $e^{- \rhod(Y) x}$ is the uncontrolled probability of ruin for the diffusion approximation model.  Note that $\psin(x) \le \psin^0(x)$ and $e^{-\rhod x} \le e^{-\rhod(Y) x}$ because the minimum probability of ruin is always bounded above by the corresponding uncontrolled probability of ruin.

\begin{corollary}
If \eqref{eq:Zd} holds, then there exists $J > 0$ and $Q > 0$ such that, for all $n > Q$ and $x \ge 0$,
\begin{align}\label{ASAF1_3}
0 \le \psin^0(x) - \psin(x) \le \dfrac{J}{\sqrtn} + \left( e^{-\rhod(Y) x} - e^{-\rhod x} \right).
\end{align}
\end{corollary}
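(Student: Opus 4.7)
The plan is to combine the two approximation results already at hand, namely Theorem \ref{thm:psin_lim} for the controlled probability of ruin and the cited result \eqref{ASAF1_2} from Cohen and Young \cite{CY2019} for the uncontrolled probability of ruin, by inserting the two diffusion-approximation quantities $e^{-\rhod x}$ and $e^{-\rhod(Y) x}$ as intermediaries between $\psin^0(x)$ and $\psin(x)$.

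First I would dispense with the left inequality $0 \le \psin^0(x) - \psin(x)$, which is immediate from the definitions: $\psin$ is obtained by taking an infimum over an admissible set $\fR$ that contains the trivial retention $R \equiv Y$ (no reinsurance) yielding exactly $\psin^0$. Next, for the right inequality, I would write the telescoping identity
\begin{equation*}
\psin^0(x) - \psin(x) = \bigl[\psin^0(x) - e^{-\rhod(Y) x}\bigr] + \bigl[e^{-\rhod(Y) x} - e^{-\rhod x}\bigr] + \bigl[e^{-\rhod x} - \psin(x)\bigr].
\end{equation*}
Bounding the first bracket by $D/\sqrtn$ via \eqref{ASAF1_2} (valid for $n > M$) and the third bracket by $C'/\sqrtn$ via Theorem \ref{thm:psin_lim} (valid for $n > N$), the middle bracket remains as stated in the corollary. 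Setting $J = C' + D$ and $Q = \max(M, N)$ yields the claim.

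There is no real obstacle here: the hypothesis \eqref{eq:Zd} is precisely what is needed to invoke Theorem \ref{thm:psin_lim}, and the analogous hypothesis in the uncontrolled case that is needed for \eqref{ASAF1_2} is automatically implied by \eqref{eq:Zd} since the uncontrolled risk process corresponds to the admissible retention $R(y) = y$, whose associated moment generating function dominates those of the optimally retained claims. The only mildly delicate point is noting that both error bounds are uniform in $x \ge 0$ and that the absolute-value bounds in \eqref{ASAF1} and \eqref{ASAF1_2} give the correct one-sided inequalities needed here (upper bound on $\psin^0 - e^{-\rhod(Y) x}$ and lower bound on $\psin - e^{-\rhod x}$, both of the form $\pm O(n^{-1/2})$), which is why the middle term $e^{-\rhod(Y) x} - e^{-\rhod x}$ appears unmodified on the right side.
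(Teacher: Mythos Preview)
Your proof is correct and follows essentially the same approach as the paper: the same telescoping decomposition through $e^{-\rhod(Y)x}$ and $e^{-\rhod x}$, the same invocation of Theorem \ref{thm:psin_lim} and the Cohen--Young bound \eqref{ASAF1_2}, and the same choice $J = C' + D$, $Q = \max(M,N)$.
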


\begin{proof}
From Theorem \ref{thm:psin_lim} and from Theorem 4.1 in Cohen and Young \cite{CY2019}, there exist $C' > 0$, $D > 0$, and $Q = \max(N, M)$ such that
\begin{align*}
0 \le \psin^0(x) - \psin(x) &= \left(\psin^0(x) - e^{-\rhod(Y) x} \right) + \left( e^{-\rhod(Y) x} - e^{-\rhod x} \right) + \left( e^{-\rhod x} - \psin(x) \right) \\
&\le \dfrac{C' + D}{\sqrtn} + \left( e^{-\rhod(Y) x} - e^{-\rhod x} \right).
\end{align*}
Thus, \eqref{ASAF1_3} follows if we set $J = C' + D$.
\end{proof}

\appendix

\section{Proof that $\psi$ is the unique viscosity solution of \eqref{eq:hjbx}}\label{app:A}

\setcounter{equation}{0}
\renewcommand{\theequation}
{A.\arabic{equation}}

For the classical risk model perturbed by a diffusion, we cannot find an explicit expression for the minimum probability of ruin.  Instead, we apply stochastic Perron's method, created by Bayraktar and S\^irbu \cite{BS2012, BS2013}, to prove that the value function $\psi$ is the unique continuous viscosity solution of its HJB equation with appropriate boundary conditions.  Define the operator $F$ via its action on appropriately differentiable functions $u$ and $v$ as follows:  For $x > 0$,
\begin{align}
\label{eq:F}
& F \big(x, u(x), v_x(x), v_{xx}(x), u(\cdot) \big) = \kap v_x(x) - \dfrac{1}{2} \, \bet^2 v_{xx}(x)  \notag \\
& \qquad - \la \inf_{R} \left[ \left( (1 + \tet) \E R + \eta \E(YR) - \dfrac{\eta}{2} \, \E \big(R^2 \big) \right)
v_x(x) + {\E u(x - R) - u(x)} \right],
\end{align}
in which we take the infimum over Borel-measurable retention functions $R$ such that $0 \le R(y) \le y$ for all $y \in \R^+$.  Then, the HJB equation for our problem is
\begin{align}
\label{HJB_F}
F\big(x, v(x), v_x(x), v_{xx}(x), v(\cdot) \big) = 0,  \quad \text{for all } x > 0,
\end{align}
with boundary conditions
\begin{align}
\label{boundary_condition}
v(x) = 1 \text{ for all } x \le 0, \quad \text{ and } \quad \lim \limits_{x \to \infty} v(x) = 0.
\end{align}

Next, we define viscosity sub- and supersolutions for our problem, but, first, we recall the definitions of subjets and superjets.

\begin{defin}\label{def:sup/subjet1}
Let $v: \R \to [0,1]$ be a u.s.c.\ function, and let $u: \R \to [0,1]$ be an l.s.c.\ function.
\begin{enumerate}
\item[$1.$]  $(p,X) \in \R^2 $ is a {\rm second-order superjet} of $v$ at $x \in \R $ if
\[
v(x+z)\le v(x) + p z + \frac{1}{2} \, Xz^2 + o\big(z^2 \big), \quad \text{as } z \to 0.
\]
\item[$2.$]   $(p,X) \in \R^2 $ is a {\rm second-order subjet} of $u$ at $x \in \R $ if
\[
u(x+z)\ge u(x) + p z + \frac{1}{2} \, Xz^2 + o \big(z^2 \big), \quad \text{as } z \to 0.
\]
\item[$3.$]  $(p,X) \in \R^2 $ is a {\rm limiting superjet} of $v$ at $x$ if there exists a seqence $\{(x_n, p_n, X_n)\} \to (x, p, X)$ such that $(p_n, X_n)$ is a superjet of $v$ at $x_n$ and $v(x_n) \to v(x)$.
\item[$4.$]  $(p,X) \in \R^2 $ is a {\rm limiting subjet} of $u$ at $x$ if
there exists a sequence $\{(x_n, p_n, X_n)\} \to (x, p, X)$ such that $(p_n, X_n)$ is a subjet of $u$ at $x_n$ and $u(x_n) \to u(x)$.
\end{enumerate}
$J^{2,+} v(x)$, $\bar{J}^{2,+} v(x)$ denote the sets of second-order superjets and limiting superjets of $v$ at $x$, respectively, and $J^{2,-} u(x)$, $\bar{J}^{2,-} u(x)$ denote the sets of second-order subjets and limiting subjets of $u$ at $x$, respectively.   \qed
\end{defin}

\begin{defin}\label{def:sup/subjet}
We say a u.s.c.\ function $\underline{u}: \R \to [0, 1]$ is a {\rm viscosity subsolution} of \eqref{HJB_F} and \eqref{boundary_condition} if \eqref{boundary_condition} holds and if, for any $x_0 > 0$ and $(p,X)\in J^{2,+} \underline{u}(x_0)$, we have
\begin{equation}\label{eq:subjet_ineq}
F\big(x_0, \underline{u}(x_0), p, X, \underline{u}(\cdot) \big) \le 0.
\end{equation}
Similarly, we say an l.s.c.\ function $\bar{u}: \R \to [0, 1]$ is a {\rm viscosity supersolution} of \eqref{HJB_F} and \eqref{boundary_condition} if \eqref{boundary_condition} holds and if, for any $x_0 > 0$ and $(p,X)\in J^{2,-} \bar{u}(x_0)$, we have
\begin{equation}\label{eq:superjet_ineq}
F\big(x_0, \bar{u}(x_0), p, X, \bar{u}(\cdot) \big) \ge 0.
\end{equation}
Finally, a function $u$ is called a $($continuous$)$ {\rm viscosity solution} of \eqref{HJB_F} and \eqref{boundary_condition} if it is both a viscosity subsolution and a viscosity supersolution of \eqref{HJB_F} and \eqref{boundary_condition}.  \qed
\end{defin}

\begin{remark}
Note that, because the operator $F$ in our model is continuous, we may replace $J^{2,+} \underline{u}(x_0)$ and $J^{2,-} \bar{u}(x_0)$ with their corresponding closures $\bar{J}^{2,+} \underline{u}(x_0)$ and $\bar{J}^{2,-} \bar{u}(x_0)$ in Definition {\rm \ref{def:sup/subjet}}. \qed
\end{remark}

\begin{remark}
We could allow non-strict equality in the boundary conditions, that is, $\underline{u}(x) \le 1$ for all $x \le 0$, $\lim \limits_{x \to \infty} \underline{u}(x) \le 0$, $\bar{u}(x) \ge 1$ for all $x \le 0$, and $\lim \limits_{x \to \infty} \bar{u}(x) \ge 0$, but it is useful in what follows to require strict equality.  \qed
\end{remark}

We use stochastic Perron's method, introduced by Bayraktar and S\^irbu \cite{BS2012, BS2013}, to construct a solution of the HJB equation and, then, use a comparison theorem to verify that this solution equals the value function.  The main arguments are as follows:  First, we bound the value function from below and above by {\it stochastic} sub- and supersolutions (as defined later in this appendix):
\begin{align}
\label{eq:u_vs_v}
u \le \psi \le v.
\end{align}
Let ${\Psi^{-}}$ and ${\Psi^{+}}$ denote the sets of stochastic sub- and supersolutions, respectively.  Define $u_{-}$ and $v_{+}$ on $\R^+$ by
\[
u_{-}(x) = \sup_{u \in \Psi^{-}} u(x), \qquad \qquad v_{+}(x) = \inf_{v \in \Psi^{+}} v(x).
\]
From \eqref{eq:u_vs_v}, we deduce
\begin{align*}
u_{-} \le \psi \le v_{+}.
\end{align*}
Second, we prove that $u_{-}$ is a viscosity supersolution and $v_{+}$ is a viscosity subsolution of \eqref{HJB_F} and \eqref{boundary_condition}.  Third, a comparison result for viscosity sub- and supersolutions implies the reverse inequality, namely,
\[
u_{-} \ge v_{+}.
\]
Thus, we conclude that $\psi (= u_{-} = v_{+})$ is the unique (continuous) viscosity solution of the HJB equation satisfying the boundary conditions in \eqref{boundary_condition}.

\subsection{Stochastic supersolution}

To apply stochastic Perron's method, we first redefine the stochastic control problem using a stronger formulation.  To that end, let $0 \le \tau \le \omega \le \tau_0$ be stopping times.  Recall that $\tau_0$ is the time of ruin, defined in \eqref{eq:tau}.
Let $\fR_{\tau, \omega}$ denote the collection of predictable processes $\mR: (\tau, \omega] \to \R^+$, by which we mean that, for a fixed value of $y \ge 0$, the mapping $(t, \varpi) \mapsto R_t(\varpi, y) \times {\bf 1}_{\{ \tau < t \le \omega \}}$ is predictable with respect to the filtration $\mathbb{F}$, and $0 \le R_t(\varpi, y) \le y$ for all $t$ in the stochastic interval $(\tau, \omega]$ and $\varpi \in \Omega$.

\begin{defin}
A pair $(\tau, \zeta)$ is called a {\rm random initial condition} if $\tau$ is an $\mathbb{F}$-stopping time taking values in $[0, \tau_0]$ and $\zeta$ is an $\mF_{\tau}$-measurable random variable.  Then, for $\mR \in \fR_{\tau, \tau_0}$, the insurer's surplus process ${X}^{\tau, \zeta, \mR}$ is given by, for $t \in [\tau, \tau_0)$,
\begin{align}
\label{eq:X_rand}
X^{\tau, \zeta, \mR}_t
&= \zeta + \int^t_{\tau} \left( - \kap + \la \left( (1 + \tet) \E R_s + \eta \E \big( Y R_s \big) - \dfrac{\eta}{2} \, \E \big( R^2_s \big) \right) \right)d s - \int^t_{\tau} R_s dN_s + \bet (W_t - W_\tau).
\end{align}
\end{defin}

For convenience in what follows, we introduce a so-called {\it coffin state} $\Delta$, which represents the state when ruin occurs.  We set $\Delta + x = \Delta$ for all $x \in \R$ and $X_t = \Delta$ for all $ t \in [\tau_0, \infty)$.  For any function $u$ defined on $\R$, we extend it to $\R \cup \{ \Delta \}$ by setting $u(\Delta) = 1$.  Next, we define a stochastic supersolution.

\begin{defin} \label{def:stoch_super}
A u.s.c.\ function $v: \R \to \left[0, 1 \right]$ is called a {\rm stochastic supersolution} if it satisfies the following properties:
\begin{itemize}
\item[$(1)$]
For any random initial condition $(\tau, \zeta)$, there exists a retention strategy $\mR \in \fR_{\tau, \tau_0}$ such that, for any $\mathbb{F}$-stopping time $\omega \in [\tau, \tau_0]$,
\[
v(\zeta) \ge \E \left[ v \big({X}^{\tau, \zeta, \mR}_\omega \big) \Big | \, \mF_{\tau}\right] ~~ a.s,
\]
in which $v$ is understood to be its extension to $\R \cup \{\Delta \}$.  We say that $\mR$ is {\rm associated with} $v$ for the initial condition $(\tau, \zeta)$.
\item[$(2)$] $v(x) = 1$ for all $x \le 0$, and $\lim \limits _{x \to \infty} v(x) \ge 0$.
\end{itemize}
Let ${\Psi^+}$ denote the set of stochastic supersolutions.  \qed
\end{defin}

$\Psi^+$ is non-empty because $1 \in \Psi^+$.  However, it is more useful to have a stochastic supersolution that satisfies the boundary conditions with equality; therefore, we present the following lemma.

\begin{lemma}\label{lem:opsi}
Define the function $\opsi$ on $\R$ by
\begin{equation}\label{eq:opsi}
\opsi(x) = e^{- \rhoj x} \wedge 1,
\end{equation}
in which $\rhoj$ is the maximum adjustment coefficient given in Theorem {\rm \ref{thm:rhoj}}.  Then, $\opsi$ is a stochastic supersolution.
\end{lemma}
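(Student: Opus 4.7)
\noindent\emph{Proof proposal.}
My plan is to verify both conditions in Definition \ref{def:stoch_super} by exhibiting, for every random initial condition $(\tau,\zeta)$, a single retention strategy under which $e^{-\rhoj \widetilde{X}_t}$ is a non-negative local (hence super-)martingale, where $\widetilde{X}$ denotes the solution of \eqref{eq:X_rand} continued past $\tau_0$ without the coffin relabeling; the pointwise bound $\opsi(x) \le e^{-\rhoj x}$ for every $x \in \R$, valid because $\opsi = e^{-\rhoj \, \cdot} \wedge 1$, will then produce the required domination of $\opsi(X_t)$. The candidate is the time-homogeneous, state-independent retention determined by the maximizer $\hRj$ from Theorem \ref{thm:rhoj}: set $R_s(\varpi, y) = \hRj(y)$ for every $s \in (\tau,\tau_0]$. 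Since $\hRj$ is Borel measurable with $0 \le \hRj(y) \le y$, this process lies in $\fR_{\tau,\tau_0}$ irrespective of the initial condition, and the boundary properties in part (2) of Definition \ref{def:stoch_super} hold by construction of $\opsi$.

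For part (1), I would apply Ito's formula with jumps to $M_t := e^{-\rhoj \widetilde{X}_t}$. Combining the Brownian contribution $\tfrac{1}{2}\rhoj^2 \bet^2 M_t\,dt$, the drift contribution $-\rhoj\bigl[-\kap + \la\bigl((1+\tet)\E\hRj + \eta\E(Y\hRj) - \tfrac{\eta}{2}\E(\hRj^2)\bigr)\bigr]M_t\,dt$, and the predictable compensator $M_{t^-}\la\bigl(\E e^{\rhoj\hRj(Y)}-1\bigr)\,dt$ of the jump term $M_{t^-}\bigl(e^{\rhoj\hRj(Y_t)}-1\bigr)\,dN_t$, the $dt$-coefficient of $M$ works out to $-M_{t^-}$ times the left side of the defining identity \eqref{eq:rhoJR} at $R=\hRj$, $r=\rhoj$, and therefore vanishes. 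Thus $M$ is a local martingale and, being non-negative, a genuine supermartingale by Fatou; optional sampling then yields $\E[M_\omega \mid \mF_\tau] \le M_\tau = e^{-\rhoj \zeta}$ for every stopping time $\omega \in [\tau,\tau_0]$.

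To conclude, on $\{\zeta > 0\}$ one has $M_\tau = \opsi(\zeta)$, so it suffices to show $\opsi(X_\omega) \le M_\omega$ almost surely. On $\{\omega < \tau_0\}$ the surplus is strictly positive with $X_\omega = \widetilde{X}_\omega$, so $\opsi(X_\omega) = e^{-\rhoj \widetilde{X}_\omega} = M_\omega$; on $\{\omega = \tau_0\}$ the un-killed value satisfies $\widetilde{X}_{\tau_0} \le 0$, giving $\opsi(X_\omega) = \opsi(\Delta) = 1 \le e^{-\rhoj \widetilde{X}_{\tau_0}} = M_{\tau_0}$. Combining with the supermartingale inequality yields $\E[\opsi(X_\omega)\mid\mF_\tau] \le \opsi(\zeta)$, and on the complementary event $\{\zeta \le 0\}$ the inequality is trivial since $\opsi(\zeta) = 1$ dominates $\opsi(X_\omega) \le 1$. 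The main technical nuisance I foresee is careful bookkeeping at $\tau_0$, where a downward jump may carry $\widetilde{X}$ strictly below $0$ and render $M_{\tau_0}>1$; separating the un-killed process $\widetilde{X}$ from the coffin-state process $X$ keeps this clean. Finiteness of $\E e^{\rhoj \hRj(Y)}$, which the jump compensator requires, follows automatically from the fact that $\rhoj$ solves \eqref{eq:rhoJR} at $R=\hRj$.
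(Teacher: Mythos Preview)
Your argument is correct and reaches the same conclusion, but by a somewhat different route than the paper. The paper applies It\^o's formula directly to $\opsi\big(X^{\tau,\zeta,\widetilde\mR_J}\big)$ on the stopped process and then verifies a drift \emph{inequality}: for $x>0$ it shows that
\[
-\tfrac{1}{2}\bet^2\opsi_{xx}(x)+\Big(\kap-\la\big((1+\tet)\E\hRj+\eta\E(Y\hRj)-\tfrac{\eta}{2}\E(\hRj^2)\big)\Big)\opsi_x(x)-\la\big(\E\opsi(x-\hRj)-\opsi(x)\big)\ge 0,
\]
which reduces, via \eqref{eq:rhoJR}, to the elementary bound $\int_0^\infty\big(e^{\rhoj\hRj(y)}-e^{\rhoj(x\wedge\hRj(y))}\big)\,dF_Y(y)\ge 0$. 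The truncation $\wedge\,1$ is what turns the exact adjustment-coefficient identity into an inequality. You instead apply It\^o to the \emph{untruncated} exponential $M_t=e^{-\rhoj\widetilde X_t}$ on the un-killed process, so the drift vanishes identically by \eqref{eq:rhoJR} and $M$ is a non-negative local martingale, hence a supermartingale; you then recover the statement for $\opsi$ from the pointwise bound $\opsi\le e^{-\rhoj\,\cdot}$ together with $\widetilde X_{\tau_0}\le 0$. Your route has the advantage of avoiding the non-differentiability of $\opsi$ at $0$ and of isolating the boundary bookkeeping in a single pointwise comparison; the paper's route is slightly more direct once one accepts the generalized It\^o formula, and makes the role of the truncation visible in the drift inequality. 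Both are valid.
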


\begin{proof}
By construction, $\opsi$ is in $\mC^0(\R)$, $\opsi(x) = 1$ for all $x \le 0$, and $\lim_{x \to \infty} \opsi(x) = 0$; thus, $\opsi$ is u.s.c.\ and satisfies condition (2) in Definition \ref{def:stoch_super} with equality.  To show condition (1) of that definition, consider a random initial condition $(\tau, \zeta)$, and define the stationary retention strategy $\widetilde \mR_J = \big\{ \big(\hRj \big)_t \big\}_{\tau \le t \le \tau_0}$ by
\begin{equation*}
\big(\hRj \big)_t(y) = \hRj(y),
\end{equation*}
in which the function $\hRj$ is given in \eqref{eq:hRj}.  For $x > 0$,
\begin{align}\label{eq:cR_xpos}
&- \, \dfrac{1}{2} \, \bet^2 \, \opsi_{xx}(x) + \left( \kap - \la \left( (1 + \tet) \E \hRj + \eta \E \big(Y \hRj \big) - \dfrac{\eta}{2} \, \E \big( \hRj^2 \big) \right) \right) \opsi_x(x) \notag \\
&\quad  -  \la \left( \E \opsi \big(x - \hRj \big) - \opsi(x) \right) \ge 0.
\end{align}
Indeed, it follows from \eqref{eq:rhoj} that inequality \eqref{eq:cR_xpos} is equivalent to
\begin{equation}\label{eq:ineq3}
\int_0^\infty \left( e^{\rho \hRj(y)} - e^{\rho \big(x \wedge \hRj(y) \big)} \right)  dF_Y(y) \ge 0,
\end{equation}
which is clearly true.

By applying a general version of It\^o's formula (see Protter \cite{P2005}) to $\opsi \Big( X^{\tau, \zeta, \widetilde \mR_J}_{\omega} \Big)$, for any $\mathbb{F}$-stopping time $\omega \in [\tau, \tau_0]$, we obtain
\begin{align}\label{eq:opsi_Ito}
&\opsi \Big( X^{\tau,\zeta, \widetilde \mR_J}_{\omega} \Big) = \opsi(\zeta) + M_{\omega}
+ \bet \int_\tau^{\omega} \opsi_x \Big(X^{\tau, \zeta, \widetilde \mR_J}_t \Big) d W_t\\
&\qquad - \int_\tau^{\omega} \bigg[ - \dfrac{1}{2} \, \bet^2 \, \opsi_{xx} \Big(X^{\tau, \zeta, \widetilde \mR_J}_t \Big) -  \la \Big( \E \opsi \Big(X^{\tau,\zeta, \widetilde \mR_J}_t - \big(\hRj \big)_t \Big) - \opsi \Big(X^{\tau,\zeta, \widetilde \mR_J}_t \Big) \Big) \notag \\
&\qquad \qquad \qquad \quad ~~ + \left( \kap - \la \left( (1 + \tet) \E \big(\hRj \big)_t + \eta \E\big(Y \big(\hRj \big)_t \big) - \dfrac{\eta}{2} \, \E \big(\big(\hRj \big)^2_t \big) \right) \right) \opsi_x \Big(X^{\tau, \zeta, \widetilde \mR_J}_t \Big)  \bigg] dt,  \notag
\end{align}
in which
\begin{align*}
M_t & = \sum \limits_{\substack{ X^{\tau,\zeta, \widetilde \mR_J}_s \ne X^{\tau,\zeta, \widetilde \mR_J}_{s-}\\ \tau \le s \le t }} \left(\opsi \Big(X^{\tau,\zeta, \widetilde \mR_J}_s \Big) - \opsi \Big(X^{\tau,\zeta, \widetilde \mR_J}_{s-} \Big) \right) \\
 & \quad - \la \int_\tau^{t} \left(\E \opsi \Big(X^{\tau, \zeta, \widetilde \mR_J}_s - \big(\hRj \big)_s \Big)
- \opsi \Big(X^{\tau,\zeta, \widetilde \mR_J}_s \Big)\right) ds
\label{eq:M_t}
\end{align*}
is a martingale with zero $\mF_\tau$-expectation.  From \eqref{eq:cR_xpos}, we know that the integrand \eqref{eq:opsi_Ito} is non-negative.  Thus, by taking the $\mF_\tau$-expectation of the expression in \eqref{eq:opsi_Ito}, we obtain
\[
\opsi(\zeta) \ge \E \left[ \opsi \Big({X}^{\tau,\zeta, \widetilde \mR_J}_\omega \Big) \Big | \, \mF_{\tau}\right].
\]
Thus, $\opsi$ satisfies condition (1) in Definition \ref{def:stoch_super}, in which $\widetilde \mR_J$ is associated with $\opsi$ for any initial condition $(\tau, \zeta)$.
\end{proof}

The next proposition gives us a relationship between the value function $\psi$ and stochastic supersolutions.

\begin{prop}\label{prop:psi_v}
For any $v \in \Psi^+$, we have $\psi \le v$ on $\R$, that is, the minimum probability of ruin is a lower bound of any stochastic supersolution.
\end{prop}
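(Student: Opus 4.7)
The plan is to exploit the supermartingale-type property built into the definition of a stochastic supersolution by localizing at a sequence of stopping times that climb up to the ruin time $\tau_0$, and then passing to the limit to recover $\mathbb{P}(\tau_0 < \infty)$. Fix $v \in \Psi^+$ and $x \in \R$. The case $x \le 0$ is immediate: condition (2) of Definition~\ref{def:stoch_super} together with the codomain $[0,1]$ force $v(x) = 1 = \psi(x)$. So assume $x > 0$ in what follows.

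Apply condition (1) of Definition~\ref{def:stoch_super} with the deterministic initial data $(\tau, \zeta) = (0, x)$. This furnishes a strategy $\mR \in \fR_{0, \tau_0}$ such that, for every $\mathbb{F}$-stopping time $\omega$ valued in $[0, \tau_0]$,
\[
v(x) \ge \E \big[ v \big(X^{0,x,\mR}_\omega \big) \big].
\]
I specialize to the localization $\omega_n := \tau_0 \wedge n$ and decompose the right side according to whether ruin has occurred by time $n$. On $\{\tau_0 \le n\}$ the surplus has been sent to the coffin state $\Delta$, and the extension convention yields $v \big(X^{0,x,\mR}_{\omega_n}\big) = v(\Delta) = 1$; on $\{\tau_0 > n\}$, the non-negativity $v \ge 0$ suffices. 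Hence
\[
v(x) \ge \mathbb{P}(\tau_0 \le n) + \E \big[ v \big(X^{0,x,\mR}_n \big) \mathbf{1}_{\{\tau_0 > n\}} \big] \ge \mathbb{P}(\tau_0 \le n).
\]
Sending $n \to \infty$ and invoking continuity of probability from below produces
\[
v(x) \ge \mathbb{P}\big(\tau_0 < \infty \mid X_0 = x, \mR \big) \ge \inf_{\mR' \in \fR} \mathbb{P}\big(\tau_0 < \infty \mid X_0 = x \big) = \psi(x).
\]

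The main subtlety I expect to have to address is that the strategy $\mR$ delivered by condition (1) of the supersolution definition is admissible in the sense of $\fR$, so that the final infimum step is legitimate: conditions (i)--(iii) of admissibility are baked directly into the definition of $\fR_{0, \tau_0}$, and the net-profit condition \eqref{eq:drift_condition} holds for the concrete supersolutions considered in the paper (for instance, the strategy $\widetilde{\mR}_J$ built from $\hRj$ in Lemma~\ref{lem:opsi} satisfies \eqref{eq:drift_condition} precisely because its adjustment coefficient $\rhoj$ is positive). Beyond this admissibility check, the argument is a routine bounded localization-and-limit and presents no further analytic obstacles, as $v$ takes values in $[0,1]$ and the process sits in the coffin state after $\tau_0$.
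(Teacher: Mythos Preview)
Your argument is correct and tracks the paper's proof closely: both take $(\tau,\zeta)=(0,x)$, invoke the strategy $\mR$ associated with $v$ from condition~(1), and combine $v(\Delta)=1$ with $v\ge 0$ to obtain $v(x)\ge\mathbb{P}^x(\tau_0<\infty)\ge\psi(x)$. The only difference is that the paper applies the supermartingale inequality directly at $\omega=\tau_0$ (which Definition~\ref{def:stoch_super} explicitly permits), so your localization $\omega_n=\tau_0\wedge n$ followed by a limit is a harmless but unnecessary detour. Your final paragraph on the net-profit condition~(iv) flags a point the paper itself passes over: in the appendix the control class is the broader $\fR_{0,\tau_0}$, which does not impose~(iv), and the paper simply writes the final inequality $\mathbb{P}^x(\tau_0<\infty)\ge\psi(x)$ without further comment.
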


\begin{proof}  First, note that $\psi(x) = v(x) = 1$ for all $x \le 0$ and $\lim_{x \to \infty} \psi(x) = 0 \le \lim_{x \to \infty} v(x)$ by condition (2) in Definition \ref{def:stoch_super}.  Second, for $x > 0$, let $(\tau, \zeta) = (0, x)$, and let $\mR$ be associated with $v$ for this initial condition.  By applying the supermartingale property (1) in Definition \ref{def:stoch_super} with $\omega = \tau_0$, the time of ruin, and by recalling that $v(\Delta) = 1$, we have
\begin{align*}
~~\qquad v(x) \ge \, \E \left[ v \Big(X_{\tau_0}^{x, \mR} \Big) \right] \,
&= \E \left[ v \Big(X_{\tau_0}^{x, \mR} \Big) {\bf 1}_{\{\tau_0 < \infty\}} \right] + \E \left[ v \Big(X_{\tau_0}^{x, \mR} \Big) {\bf 1}_{\{\tau_0 = \infty\}} \right] \notag\\
& \ge \mathbb{P}^x \left[ \tau_0 < \infty \right] \ge \psi(x),
\end{align*}
in which $\mathbb{P}^x$ denotes probability conditional on $X_0 = x$.
\end{proof}

The proof of the following result is similar to the proof of Theorem 3.1 in Liang and Young \cite{LY2019}, so we omit it.

\begin{theorem}\label{thm:v_plus}
The upper stochastic envelope $v_{+}$, defined by
\begin{equation}\label{eq:v_plus}
v_{+}(x) = \inf \limits_{v \in {\Psi^+}} v(x),
\end{equation}
for $x \in \R$, is a viscosity subsolution of \eqref{HJB_F} and \eqref{boundary_condition}.  \qed
\end{theorem}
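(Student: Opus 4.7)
The plan is to follow the standard stochastic Perron's method, proceeding by contradiction after establishing a few preliminary facts about $v_{+}$. First I would verify the boundary conditions in \eqref{boundary_condition}: since every $v \in \Psi^{+}$ satisfies $v(x) = 1$ for $x \le 0$, the infimum $v_{+}(x) = 1$ for $x \le 0$ as well; for $x \to \infty$, Lemma \ref{lem:opsi} guarantees $\opsi \in \Psi^{+}$ with $\opsi(x) \to 0$, so $0 \le v_{+}(x) \le \opsi(x) \to 0$. I would also establish upper semi-continuity of $v_{+}$ (or work with its u.s.c.\ envelope), and, importantly, a \emph{lattice property}: if $v_1, v_2 \in \Psi^{+}$, then $v_1 \wedge v_2 \in \Psi^{+}$. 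This is proved by, given a random initial condition $(\tau,\zeta)$, choosing strategies $\mR^1, \mR^2$ associated with $v_1, v_2$ and pasting them according to which of $v_1(\zeta), v_2(\zeta)$ is smaller (measurable selection on $\mF_{\tau}$), yielding a predictable strategy that makes $v_1 \wedge v_2$ a supermartingale. This lattice property lets me extract a decreasing sequence from $\Psi^{+}$ pointwise converging to $v_{+}$.

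Next I would argue by contradiction. Suppose $v_{+}$ fails to be a viscosity subsolution at some $x_0 > 0$; then there exists $(p, X) \in J^{2,+} v_{+}(x_0)$ with
\[
F\big(x_0, v_{+}(x_0), p, X, v_{+}(\cdot)\big) > 0.
\]
By a standard test-function construction, pick $\varphi \in \mC^2(\R)$ with $\varphi(x_0) = v_{+}(x_0)$, $\varphi'(x_0) = p$, $\varphi''(x_0) = X$, and $\varphi \ge v_{+}$ everywhere, with equality only at $x_0$. By continuity of $F$ in its arguments and in the nonlocal term (since $v_{+}$ is bounded and the jump term $\E v_{+}(x - R) - v_{+}(x)$ depends continuously on $x$), the strict inequality $F > 0$ persists on a neighborhood of $x_0$ when the local arguments of $F$ are taken from $\varphi$ and the nonlocal argument is taken from $v_{+}$.

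The heart of the proof is then the perturbation step. I would choose $v \in \Psi^{+}$ with $v(x_0)$ close to $v_{+}(x_0)$, a small $\veps > 0$, and a small ball $B_r(x_0)$ on which $\varphi - \veps$ is strictly below $v$ at $x_0$ but still below neither $v$ nor $v_{+}$ far from $x_0$. Define the candidate
\[
\tilde v(x) = \begin{cases} v(x) \wedge (\varphi(x) - \veps), & x \in B_r(x_0), \\ v(x), & x \notin B_r(x_0). \end{cases}
\]
To show $\tilde v \in \Psi^{+}$, given $(\tau, \zeta)$, I would run the strategy $\mR$ associated with $v$ until the surplus enters the region where $\varphi - \veps < v$, then switch to the minimizing retention in the HJB operator for $\varphi$, stopping when the process exits $B_r(x_0)$, and then resume $\mR$. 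It\^o's formula applied to $\varphi$ combined with the strict inequality $F(\cdot, \varphi, \varphi', \varphi'', \tilde v(\cdot)) > 0$ on $B_r(x_0)$ (using that $\tilde v \le v$ inside so the nonlocal comparison goes the right way) makes $\varphi - \veps$ evaluated along the controlled process a supermartingale on that piece. Pasting the two supermartingale pieces via optional stopping yields the required supermartingale property for $\tilde v$. But $\tilde v(x_0) = \varphi(x_0) - \veps = v_{+}(x_0) - \veps < v_{+}(x_0)$, contradicting the definition of $v_{+}$ as the infimum over $\Psi^{+}$.

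The main obstacle will be the nonlocal $\E u(x - R) - u(x)$ term in $F$: the subjet inequality at $x_0$ mixes the local derivatives of $v_{+}$ with the \emph{global} values of $v_{+}$ on $\R$, so some care is required when replacing $v_{+}$ by $\varphi$ locally while keeping $v_{+}$ in the nonlocal integrand. The key observation, which I would exploit, is that since $\tilde v \le v$ everywhere and $R \ge 0$, we have $\E \tilde v(x - R) \le \E v(x - R)$, so inequalities propagate in the correct direction; combined with the strict inequality in $F$ on a neighborhood and the supermartingale property of $v$ outside $B_r(x_0)$, this is enough to conclude. A secondary technical point is handling jumps that carry the surplus across $\partial B_r(x_0)$ in one step, which is absorbed by the same pasting argument together with $\tilde v \le v$.
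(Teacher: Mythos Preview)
The paper does not supply its own proof of this theorem: it simply states that the argument is similar to Theorem~3.1 in Liang and Young~\cite{LY2019} and omits the details. Your sketch is precisely the standard stochastic Perron's argument that that reference carries out---lattice property for $\Psi^{+}$, decreasing approximating sequence, contradiction via a smooth test function $\varphi$ touching $v_{+}$ from above at a point where the subsolution inequality fails, and construction of a strictly smaller stochastic supersolution $\tilde v = v \wedge (\varphi - \veps)$ on a small ball---so your approach coincides with what the paper intends. Your identification of the nonlocal term $\E u(x-R) - u(x)$ as the main technical obstacle, and the observation that the monotonicity $\tilde v \le v$ pushes the jump integrals in the right direction (together with $\varphi \ge v_{+}$ for the comparison of the nonlocal part at the touching point), is exactly the extra care required beyond the purely diffusive case.
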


As immediate corollary of the definition of $v_{+}$ in \eqref{eq:v_plus} and of Proposition \ref{prop:psi_v}, we have the following result.

\begin{cor}\label{cor:psi_vplus}
$\psi \le v_{+}$ on $\R$, that is, the minimum probability of ruin is a lower bound of $v_{+}$.  \qed
\end{cor}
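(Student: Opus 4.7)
The plan is to obtain Corollary \ref{cor:psi_vplus} as a direct consequence of Proposition \ref{prop:psi_v} combined with the definition \eqref{eq:v_plus} of $v_+$. First, I would recall that Proposition \ref{prop:psi_v} supplies the pointwise inequality $\psi(x) \le v(x)$ for every stochastic supersolution $v \in \Psi^+$ and every $x \in \R$. That proposition has already been proved by applying the supermartingale property from Definition \ref{def:stoch_super}(1) with the random initial condition $(\tau, \zeta) = (0, x)$ and the stopping time $\omega = \tau_0$, together with the coffin-state convention $v(\Delta) = 1$, which yields $v(x) \ge \mathbb{P}^x(\tau_0 < \infty) \ge \psi(x)$.

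Next, since this inequality holds uniformly over $v \in \Psi^+$ for each fixed $x \in \R$, I would take the infimum over the right-hand side, using that infima preserve inequalities, to conclude
\[
\psi(x) \le \inf_{v \in \Psi^+} v(x) = v_+(x),
\]
where the final equality is exactly the definition of $v_+$ in \eqref{eq:v_plus}. This holds simultaneously for $x \le 0$, where both sides equal $1$, and for $x > 0$, where the argument above applies.

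There is no real obstacle: the entire content is already contained in Proposition \ref{prop:psi_v}, and the step from pointwise domination by every element of $\Psi^+$ to domination by the infimum is a one-line order-theoretic remark. The only implicit point worth noting is that $\Psi^+$ is non-empty, so that $v_+$ is not vacuously defined; this is ensured by Lemma \ref{lem:opsi}, which exhibits the explicit stochastic supersolution $\opsi(x) = e^{-\rhoj x} \wedge 1$, guaranteeing $v_+ \le \opsi \le 1$ and hence that $v_+$ is a well-defined $[0,1]$-valued function on $\R$.
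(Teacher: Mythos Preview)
Your proposal is correct and matches the paper's approach exactly: the paper states this corollary as an immediate consequence of the definition of $v_+$ in \eqref{eq:v_plus} together with Proposition \ref{prop:psi_v}, which is precisely the argument you give. Your remark on non-emptiness of $\Psi^+$ is a reasonable addition; the paper secures this slightly earlier by noting $1 \in \Psi^+$, but citing Lemma \ref{lem:opsi} works equally well.
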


\subsection{Stochastic subsolution}

\begin{defin} \label{def:stoch_sub}
An l.s.c.\ function $u: \R \to \left[ 0, 1 \right]$ is called a {\rm stochastic subsolution} if it satisfies the following properties:
\begin{itemize}
\item[$(1)$]
For any random initial condition $(\tau, \zeta)$, any retention strategy $\mR \in \fR_{\tau, \tau_0}$, and any $\mathbb{F}$-stopping time $\omega \in [\tau, \tau_0]$,
\[
u(\zeta) \le \E \left[ u \big({X}^{\tau, \zeta, \mR}_\omega \big) \Big | \, \mF_{\tau} \right] ~~ a.s.,
\]
in which $u$ is understood to be its extension to $\R \cup \{\Delta \}$.
\item[$(2)$] $u(x) \le 1$ for all $x \le 0$, and $\lim \limits _{x \to \infty} u(x) = 0$.
\end{itemize}
Let ${\Psi^-}$ denote the set of stochastic subsolutions.  \qed
\end{defin}

$\Psi^-$ is non-empty because $0 \in \Psi^-$.  However, it is more useful to have a stochastic subsolution that satisfies the boundary conditions with equality; therefore, we present the following lemma.

\begin{lemma}\label{lem:upsi}
Define the function $\upsi$ on $\R$ by
\begin{equation}\label{eq:upsi}
\upsi(x) = e^{-\gam x} \wedge 1,
\end{equation}
in which $\gam$ satisfies
\begin{equation}\label{eq:alp}
\gam \ge \dfrac{2c}{\bet^2} \, .
\end{equation}
Then, $\upsi$ is a stochastic subsolution.
\end{lemma}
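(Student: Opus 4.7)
The plan is to mirror the structure of the proof of Lemma \ref{lem:opsi}. Condition $(2)$ of Definition \ref{def:stoch_sub} is immediate from the formula $\upsi(x) = e^{-\gam x} \wedge 1$: for $x \le 0$ we have $\upsi(x) = 1$, and $\lim_{x \to \infty} \upsi(x) = 0$ because $\gam > 0$; moreover, $\upsi$ is continuous on $\R$ and hence l.s.c.

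For condition $(1)$, fix a random initial condition $(\tau, \zeta)$, an \emph{arbitrary} admissible retention $\mR = \{R_t\} \in \fR_{\tau, \tau_0}$, and a stopping time $\omega \in [\tau, \tau_0]$. Because the diffusion term forces $X_t^{\tau, \zeta, \mR} > 0$ strictly on $[\tau, \tau_0)$ (see the remark following \eqref{eq:tau}), and because $\upsi$ is $\mC^2$ on $(0, \infty)$ with $\upsi_x(x) = -\gam e^{-\gam x}$ and $\upsi_{xx}(x) = \gam^2 e^{-\gam x}$, the general version of It\^o's formula used in the proof of Lemma \ref{lem:opsi} expresses $\upsi\big(X^{\tau, \zeta, \mR}_{\omega}\big)$ as $\upsi(\zeta)$ plus a martingale with zero $\mF_\tau$-expectation plus $\int_\tau^{\omega} \mL^{R_t}\upsi\big(X_t^{\tau, \zeta, \mR}\big)\, dt$, where
\[
\mL^R \upsi(x) = \big(c - c(R)\big) \upsi_x(x) + \frac{1}{2} \, \bet^2 \upsi_{xx}(x) + \la \big(\E \upsi(x - R) - \upsi(x) \big).
\]
The crucial step is to show $\mL^R \upsi(x) \ge 0$ for every $x > 0$ and every admissible $R$.

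Two elementary bounds suffice. First, because $R \ge 0$ and $\upsi$ is non-increasing, $\upsi(x - R) \ge \upsi(x)$ for $x > 0$, so the jump term is non-negative. Second, from \eqref{eq:MVpp}, $c(R) = (1 + \tet) \la \E(Y - R) + \frac{\eta}{2} \la \E\big((Y - R)^2\big) \ge 0$; hence $c - c(R) \le c$, which combined with $\upsi_x(x) < 0$ gives $(c - c(R))\upsi_x(x) \ge -c\gam e^{-\gam x}$. Adding the diffusion term yields
\[
\mL^R \upsi(x) \ge \gam e^{-\gam x} \left( \frac{\bet^2 \gam}{2} - c \right) \ge 0,
\]
by \eqref{eq:alp}.

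Taking the $\mF_\tau$-conditional expectation of the It\^o decomposition then yields $\E\big[\upsi\big(X_\omega^{\tau,\zeta, \mR}\big) \mid \mF_\tau\big] \ge \upsi(\zeta)$, where the extension $\upsi(\Delta) = 1$ handles the coffin state at ruin consistently with the boundary value $\upsi(0) = 1$. This establishes condition $(1)$ for the arbitrary strategy $\mR$, so $\upsi$ is a stochastic subsolution. The only technical care is the application of It\^o's formula across the ruin time and the correct identification of $\upsi$'s extension to $\R \cup \{\Delta\}$; both are handled exactly as in Lemma \ref{lem:opsi}, so I do not expect any serious obstacle.
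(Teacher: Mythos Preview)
Your proof is correct and follows essentially the same route as the paper's. The only cosmetic difference is that the paper phrases the drift estimate as ``the supremum of the $R$-dependent drift term over all retention functions is achieved at $R=Y$ and equals $c$,'' whereas you obtain the same bound more directly by noting $c(R)\ge 0$ (so $c-c(R)\le c$); likewise, the paper drops the jump term by observing $\E\big(e^{\gam R}\wedge e^{\gam x}\big)-1\ge 0$, which is your monotonicity observation in different notation.
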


\begin{proof}
By construction, $\upsi$ is in $\mC^0(\R)$, $\upsi(x) = 1$ for $x \le 0$, and $\lim_{x \to \infty} \upsi(x) = 0$; thus, $\upsi$ is l.s.c.\ and satisfies condition (2) in Definition \ref{def:stoch_sub} with equality.  Thus, we only need to show condition (1) of that definition.  For $x > 0$,
\[
F\left(x, \, \upsi(x), \, \upsi'(x), \, \upsi''(x), \, \upsi(\cdot)\right) \le 0,
\]
if and only if
\begin{align*}
&- \, \dfrac{1}{2} \, \bet^2 \, \upsi''(x) \\
&\quad  + \sup_R \left[ \left( \kap - \la \left( (1 + \tet) \E R + \eta \E \big(Y R \big) - \dfrac{\eta}{2} \, \E \big( R^2 \big) \right) \right) \upsi'(x) -  \la \left( \E \upsi \big(x - R \big) - \upsi(x) \right) \right] \le 0,
\end{align*}
or equivalently,
\begin{align*}
- \, \dfrac{1}{2} \, \bet^2 \, \gam^2 + \sup_R \left[ - \left( \kap - \la \left( (1 + \tet) \E R + \eta \E \big(Y R \big) - \dfrac{\eta}{2} \, \E \big( R^2 \big) \right) \right) \gam -  \la \left( \E \big( e^{\gam R} \wedge e^{\gam x} \big) - 1 \right) \right] \le 0.
\end{align*}
The last inequality holds if the following stronger one does:
\begin{align}\label{eq:alp_ineq}
- \, \dfrac{1}{2} \, \bet^2 \, \gam + \sup_R \left[ \la \left( (1 + \tet) \E R + \eta \E \big(Y R \big) - \dfrac{\eta}{2} \, \E \big( R^2 \big) \right) - \kap \right] \le 0,
\end{align}
or equivalently, because the argmax in \eqref{eq:alp_ineq} is given by $R = Y$ and because we are looking for $\gam > 0$,
\[
\dfrac{1}{2} \, \bet^2 \, \gam^2 - c \gam \ge 0,
\]
which is a restatement of \eqref{eq:alp} when $\gam > 0$.

Hence, by applying It\^o's formula to $\upsi \big(X_{\omega}^{\tau, \zeta, \mR} \big)$ with initial random condition $(\tau, \zeta)$, retention strategy $\mR \in \fR_{\tau, \tau_0}$, and $\mathbb{F}$-stopping time $\omega \in [\tau, \tau_0]$, and by taking the $\mF_\tau$-expectation as in the proof of Lemma \ref{lem:opsi}, we obtain
$$
\upsi(\zeta) \le \E \left[ \upsi \big({X}^{\tau,\zeta, \mR}_\omega \big) \Big | \mF_{\tau} \right] ~~a.s.
$$
Therefore, $ \upsi$ satisfies condition (1) in Definition \ref{def:stoch_sub}.
\end{proof}

Next, we show a relationship between the value function $\psi$ and stochastic subsolutions.  Before that, we prove a lemma, in which we adapt the proof of Lemma 2.9 in Schmidli \cite{S2008} to our model.

\begin{lemma}\label{lem:Xp}
For any $x > 0$ and any admissible reinsurance strategy $\mR = \{R_t\}_{t \ge 0}$, on the set $\{\tau_0 = \infty\}$, we have $\lim \limits_{t \to \infty} X_t^{x, \mR} = \infty$.
\end{lemma}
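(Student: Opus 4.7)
The plan is to argue by contradiction: if $\liminf_{t \to \infty} X_t < \infty$ with positive probability on $\{\tau_0 = \infty\}$, I will force ruin to occur there with probability one, a contradiction. Because $X_t \ge 0$ on $\{\tau_0 = \infty\}$ and because $\lim X_t = \infty$ is equivalent to $\liminf X_t = \infty$, it suffices to fix an arbitrary $M > 0$, prove $\mathbb{P}\big(\tau_0 = \infty,\, \liminf_{t \to \infty} X_t < M\big) = 0$, and then let $M \to \infty$ along the positive integers.

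I would then record the successive returns of $X$ to $[0, M]$ by $\mathbb{F}$-stopping times: set $\sig_0 = 0$ and $\sig_{k+1} = \inf\{t \ge \sig_k + 1 : X_t \le M\}$. With $B_k = \{\sig_k < \infty\} \cap \{\tau_0 > \sig_k\}$, the event to be killed is contained in $\bigcap_k B_k$. The core step is to exhibit a constant $p = p(M) > 0$, independent of the admissible continuation of $\mR$ beyond $\sig_k$, such that
\[
\mathbb{P}\big(\tau_0 > \sig_k + 1 \,\big|\, \mF_{\sig_k}\big) \le 1 - p \quad \text{on } B_k.
\]
Since $\sig_{k+1} \ge \sig_k + 1$ forces $B_{k+1} \subseteq B_k \cap \{\tau_0 > \sig_k + 1\}$, iterating the tower property gives $\mathbb{P}(B_k) \le (1-p)^k \to 0$, which closes the contradiction.

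For the uniform bound on $\mathbb{P}(\tau_0 > \sig_k + 1 \mid \mF_{\sig_k})$, I would exploit the independence of $\{W_t\}$ and $\{N_t\}$. First, admissibility forces $\E R_t \le \mu$, $\E(Y R_t) \le \sig^2$, and $\E(R_t^2) \le \sig^2$, so the drift
\[
a_t = -\kap + (1 + \tet)\la \E R_t + \la \eta \E(Y R_t) - \tfrac{\eta}{2}\la \E\big(R_t^2\big)
\]
satisfies $|a_t| \le K$ almost surely, with $K$ depending only on model parameters. On the joint event $E_k$ that no claim arrives in $(\sig_k, \sig_k + 1]$ and that $\inf_{t \in [\sig_k, \sig_k + 1]} \bet(W_t - W_{\sig_k}) < -(M + K)$, the bound $X_t \le M + K + \bet(W_t - W_{\sig_k})$ forces $X_t < 0$ for some $t \in (\sig_k, \sig_k + 1]$, so $\tau_0 \le \sig_k + 1$. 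By the strong Markov property, the independence of $W$ and $N$, the Poisson no-jump probability $e^{-\la}$, and the reflection principle,
\[
\mathbb{P}(E_k \mid \mF_{\sig_k}) \ge e^{-\la} \cdot 2\,\mathbb{P}\big(\bet W_1 < -(M+K)\big) =: p > 0,
\]
uniformly in the continuation of $\mR$ past $\sig_k$.

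The main obstacle I expect is verifying that this lower bound $p$ is genuinely strategy-uniform: the drift $a_t$ is only predictable and may depend on $\omega$, but the deterministic envelope $|a_t| \le K$ plugs the gap, since on the no-claim event the Brownian fluctuation alone is responsible for the excursion below zero. A secondary technicality is the filtration book-keeping — I need $\sig_k$ to be a genuine stopping time of the augmented, right-continuous filtration so that the strong Markov property applies to $(W, N)$ at $\sig_k$, and I need to argue that $X_{\sig_k} \le M$ on $\{\sig_k < \infty\}$ despite the possible downward jumps of $X$.
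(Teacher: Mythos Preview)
Your argument is correct and takes a genuinely different route from the paper's. The paper (adapting Schmidli's Lemma~2.9) does not try to force ruin directly at each visit to a low level. Instead it shows that over any unit interval $[t,t+1]$ the increment $X_{t+1}-X_t$ is at most $-\eps$ with probability at least some $\del>0$, via a case split on the control: if the retention is ``small'' for most of the interval then the net premium $c-c(R_s)$ is sufficiently negative and the drift alone pushes $X$ down; if the retention is ``large'' often enough then with positive probability the retained claims exceed $c+\eps$. A submartingale/Borel--Cantelli argument then ratchets $\liminf X_t$ down by $\eps/2$ at each step until it is negative.

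Your approach bypasses this dichotomy entirely by observing that, since the drift $c-c(R_t)$ is deterministically bounded (in $[-\kap,c]$), on the no-claim event the Brownian perturbation alone can drag $X$ from $[0,M]$ to below zero with probability $e^{-\la}\cdot 2\Phi\big(-(M+K)/\bet\big)>0$, uniformly in the control. This is shorter and avoids any analysis of the reinsurance strategy, but it hinges essentially on $\bet>0$; the paper's argument, being modeled on the unperturbed classical case, would survive (with obvious modifications) if $\bet=0$, whereas yours would not. Both arguments close with the same iterated-conditioning step $\mathbb{P}(B_k)\le(1-p)^k$. The two technicalities you flag (that $\sig_k$ is an $\mathbb{F}$-stopping time because $X$ is $\mathbb{F}$-adapted, and that $X_{\sig_k}\le M$ whether entry into $[0,M]$ occurs by continuous crossing or by a downward jump) are both easily handled.
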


\begin{proof}
Let $\mR = \{ R_t \}_{t \ge 0}$ be any admissible reinsurance strategy.  For any $R = R_t$ for $t \ge 0$, let $n(R)$ denote the rate of {\it net} premium income, that is,
$$
n(R) = c - (1 + \tet) \la \E \big( Y - R \big) - \dfrac{\eta}{2} \, \la \E \big( (Y - R)^2 \big).
$$
Also, let $A$ denote the set of possible retention functions $R = R_t$ such that  $n(R) \ge - \kap/2$.  Note that $- \kap$ is the rate of net premium income if the insurer purchases full reinsurance, that is, $c(0) = -\kap < 0$; see \eqref{eq:kap}.   Choose $0 < \eps < \kap/2$.  Let $\varsigma \in (0, 1)$ denote the constant $(\kap - 2 \eps) / (\kap + 2c)$. First, suppose $\int^{t+1}_{t} {\bf 1}_{\{R_s \in A\}} ds \le \varsigma$; then, because
\[
dX_t^{x, \mR} = n(R_t) dt - R_t dN_t + \bet dW_t,
\]
it follows that
\begin{align*}
X_{t+1}^{x, \mR} &\le X_t^{x, \mR} + \int_t^{t+1} n(R_s) ds + \bet (W_{t+1} - W_t) \\
&= X_t^{x, \mR} + \int_t^{t+1} n(R_s) {\bf 1}_{\{R_s \in A\}} ds + \int_t^{t+1} n(R_s) {\bf 1}_{\{R_s \notin A\}} ds  + \bet (W_{t+1} - W_t) \\
&\le X_t^{x, \mR} + \varsigma c - (1-\varsigma) \kap/2 + \bet (W_{t+1} - W_t) \\
&= X_t^{x, \mR} - \eps + \bet (W_{t+1} - W_t).
\end{align*}
Because $\mathbb{P} \big[W_{t+1} - W_t \le 0 \big] = 1/2$, there exists a $0 < \del <1/2$, such that
\begin{equation}\label{eq:prob1}
\mathbb{P}\Big[X_{t+1}^{x, \mR} -  X_t^{x, \mR} \le - \eps \Big] \ge \del.
\end{equation}

Second, suppose $\int^{t+1}_{t} {\bf 1}_{\{R_s \in A\}} ds > \varsigma$.  By the definition of $A$, we can assume that $\eps \in (0, \kap/2)$ from the first case is small enough so that
\begin{align}
\label{eq:Y_small}
\mathbb{P} \left[ \inf \limits_{R_t \in A} R_t > \eps \right] > 0.
\end{align}
Moreover, note that
\begin{align}
\label{eq:N_A}
\mathbb{P} \left[\int^{t+1}_{t} {\bf 1}_{\{R_s \in A\}} d N_s \ge 1 + \dfrac{c}{\eps} \, \right] \ge \mathbb{P} \left[N_{\varsigma} \ge 1 + \dfrac{c}{\eps} \, \right] > 0.
\end{align}
Thus, by combining \eqref{eq:Y_small} and \eqref{eq:N_A}, we deduce
\begin{equation}\label{eq:prob2}
\mathbb{P} \left[\sum \limits_{i=N_t+1}^{N_{t+1}} R_s(Y_i) \ge c + \eps\right] > 0,
\end{equation}
in which $s \in (t, t+1]$ in the subscript of the summand $R_s(Y_i)$ indicates the time of the $i$th claim.  Now,
\begin{align*}
X_{t+1}^{x, \mR} &= X_t^{x, \mR} + \int_t^{t+1} n(R_s) ds - \sum \limits_{i=N_t+1}^{N_{t+1}} R_s(Y_i) + \bet (W_{t+1} - W_t) \\
&\le X_t^{x, \mR} + c - \sum \limits_{i=N_t+1}^{N_{t+1}} R_s(Y_i) + \bet (W_{t+1} - W_t).
\end{align*}
As in the first case, because $\mathbb{P} \big[W_{t+1} - W_t \le 0 \big] = 1/2$, there exists a $0 < \del < 1/2$, such that
\begin{equation}\label{eq:prob3}
\mathbb{P}\Big[X_{t+1}^{x, \mR} -  X_t^{x, \mR} \le - \eps \Big] \ge \del.
\end{equation}
We can choose $\del$ to be the same lower bound in both \eqref{eq:prob1} and \eqref{eq:prob3} by taking the minimum of the two.

Next, choose $a > 0$.  Define a sequence of hitting times $\{t_k \}_{k = 0, 1, \dots}$ recursively as follows:
\[
t_0 = \inf \left\{t\ge 0: X_t^{x,\mR} \le a \right\},
\]
and, for $k = 1, 2, \dots$,
\[
t_{k+1} = \inf \left\{t\ge t_k+1: X_t^{x, \mR} \le a \right\},
\]
with the understanding that $\inf \emptyset = \infty$.  From \eqref{eq:prob1} and \eqref{eq:prob2}, we deduce
\begin{align}
\label{eq:submartingale}
\mathbb{P} \Big[X_{t_k + 1}^{x, \mR} \le a - \eps \, \Big| \, \mF_{t_k} \Big] \ge \del,
\end{align}
if $t_k < \infty$.  Let $V_k = {\bf 1}_{\left\{t_k < \infty,\, X_{t_k + 1}^{x, \mR} \le a - \eps\right\}}$ and $Z_k = \del {\bf 1}_{\{t_k < \infty\}}$; \eqref{eq:submartingale} implies $\Big\{ \sum \limits_{k=1}^{n} (V_k - Z_k) \Big\}_{n \in \N}$ is a sub-martingale.

If $\liminf_{t \to \infty} X_t^{x, \mR} \le b$ for some $b < \infty$, and if we set $a = b + \eps/2$, then we have $t_0 < \infty$.  Furthermore, $\liminf_{t \to \infty} X_t^{x, \mR} \le b$ implies $t_k < \infty$  for $k = 1, 2, \dots$ recursively, which implies $\sum \limits_{k=1}^{\infty} Z_k = \infty$.  From Lemma 1.15 in Schmidli \cite{S2008}, it follows that $\sum \limits_{k=1}^{\infty} V_k = \infty$.  Thus, $X_{t_k + 1}^{x, \mR} \le b - \eps/2$ infinitely often; hence, we have $\liminf_{t \to \infty} X_t^{x, \mR} \le b - \eps/2$.  By iterating above procedure, we deduce that $\liminf_{t \to \infty} X_t^{x, \mR} < \infty$ implies $\liminf_{t \to \infty} X_t^{x, \mR} < 0$; in other words, $\tau_0 < \infty$ if $\liminf_{t \to \infty} X_t^{x, \mR}$ is finite.  Our proof is complete.
\end{proof}

\begin{prop}\label{prop:u_psi}
For any $u \in \Psi^-$, we have $u \le \psi$ on $\R$, that is, the minimum probability of ruin is an upper bound of any stochastic subsolution.
\end{prop}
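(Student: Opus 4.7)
For $x \le 0$ the inequality is immediate, since $u(x) \le 1 = \psi(x)$ by condition (2) of Definition \ref{def:stoch_sub} and by the definition of $\psi$. So I focus on $x > 0$ and fix an arbitrary admissible retention strategy $\mR \in \fR$; the goal is to prove $u(x) \le \mathbb{P}^x(\tau_0 < \infty)$, after which taking the infimum over $\mR$ gives $u(x) \le \psi(x)$.

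The plan is to apply the stochastic-subsolution inequality with initial condition $(\tau, \zeta) = (0, x)$ and the bounded stopping time $\omega = \tau_0 \wedge t$ for $t \ge 0$, which does lie in the stochastic interval $[0, \tau_0]$. This yields
\[
u(x) \le \E \big[ u \big( X^{x, \mR}_{\tau_0 \wedge t} \big) \big],
\]
where $u$ is understood to be extended by $u(\Delta) = 1$. Splitting the expectation on $\{\tau_0 \le t\}$ and $\{\tau_0 > t\}$, and using the coffin-state convention $X^{x,\mR}_{\tau_0} = \Delta$ on the former, I obtain
\[
u(x) \le \mathbb{P}^x(\tau_0 \le t) + \E \big[ u \big( X^{x, \mR}_t \big) {\bf 1}_{\{\tau_0 > t\}} \big].
\]

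Now I let $t \to \infty$. Monotone convergence gives $\mathbb{P}^x(\tau_0 \le t) \to \mathbb{P}^x(\tau_0 < \infty)$. For the second term, I further decompose $\{\tau_0 > t\} = \{t < \tau_0 < \infty\} \cup \{\tau_0 = \infty\}$. On $\{t < \tau_0 < \infty\}$, the bound $u \le 1$ together with $\mathbb{P}^x(t < \tau_0 < \infty) \to 0$ forces the contribution to vanish. On $\{\tau_0 = \infty\}$, Lemma \ref{lem:Xp} implies $X^{x, \mR}_t \to \infty$ almost surely, so the boundary condition $\lim_{x \to \infty} u(x) = 0$ and the dominated convergence theorem (with dominator $1$) yield $\E[u(X^{x,\mR}_t){\bf 1}_{\{\tau_0 = \infty\}}] \to 0$. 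Combining these limits gives $u(x) \le \mathbb{P}^x(\tau_0 < \infty)$, and taking the infimum over $\mR \in \fR$ completes the proof.

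The only nontrivial step is the asymptotic analysis of $\E[u(X^{x,\mR}_t){\bf 1}_{\{\tau_0 > t\}}]$; everything else is bookkeeping. Fortunately, Lemma \ref{lem:Xp} was proved precisely to give the $\{\tau_0 = \infty\} \Rightarrow X_t \to \infty$ dichotomy uniformly in the (arbitrary) admissible control, which, combined with the l.s.c.\ boundary condition at $+\infty$, is exactly what is required.
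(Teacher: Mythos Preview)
Your proof is correct and follows essentially the same approach as the paper: apply the submartingale inequality from Definition \ref{def:stoch_sub} with initial condition $(0,x)$, then use Lemma \ref{lem:Xp} and the boundary condition at $+\infty$ to kill the contribution on $\{\tau_0 = \infty\}$. The only difference is cosmetic: the paper applies the inequality directly with $\omega = \tau_0$ and splits on $\{\tau_0 < \infty\}$ versus $\{\tau_0 = \infty\}$, whereas you first use the bounded stopping time $\tau_0 \wedge t$ and then let $t \to \infty$ via dominated convergence; your version is arguably a touch more careful about what $u(X_{\tau_0})$ means on $\{\tau_0 = \infty\}$, but the substance is identical.
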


\begin{proof}
First, note that $\psi(x) = 1 \ge u(x)$ for all $x \le 0$ and $\lim_{x \to \infty} \psi(x) = 0 = \lim_{x \to \infty} u(x)$ by condition (2) in Definition \ref{def:stoch_sub}.  Second, for $x > 0$, let $(\tau, \zeta) = (0, x)$, let $\mR$ be any admissible retention strategy, and let $\omega = \tau_0$.  Then, by applying the submartingale property (1) in Definition \ref{def:stoch_sub}, we have
\begin{equation}\label{eq:u_ineq}
u(x) \le \E \Big[ u \big(X_{\tau_0}^{x, \mR}\big) \Big].
\end{equation}
Lemma \ref{lem:Xp} shows us that either ruin occurs or $\lim_{t \to \infty} X_t^{x, \mR} = \infty$; thus, because $\lim_{x \to \infty} u(x) = 0$ and $u \big(X_{\tau_0}^{x, \mR} \big) {\bf 1}_{\{\tau_0 < \infty\}} = {\bf 1}_{\{\tau_0 < \infty\}}$, inequality \eqref{eq:u_ineq} implies
\[
u(x) \le \E \Big[ u \big(X_{\tau_0}^{x, \mR}\big)  {\bf 1}_{\{\tau_0 < \infty\}} \Big] = \mathbb{P}^x \big( \tau_0 < \infty \big).
\]
Because this inequality holds for any retention strategy, by taking the infimum over all admissible retention strategies, we obtain $u \le \psi$.
\end{proof}

The proof of the following theorem is similar to the proof of Theorem 3.2 in Liang and Young \cite{LY2019}, so we omit it.

\begin{theorem}\label{thm:u_minus}
The lower stochastic envelope $u_{-},$ defined by
\begin{equation}\label{eq:u_minus}
u_{-}(x) = \sup \limits_{u \in {\Psi^-}} u(x),
\end{equation}
is a viscosity supersolution of \eqref{HJB_F} and \eqref{boundary_condition}.  \qed
\end{theorem}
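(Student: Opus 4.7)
The plan is to adapt the stochastic Perron method of Bayraktar and S\^irbu \cite{BS2012, BS2013} as in the proof of Theorem 3.2 of Liang and Young \cite{LY2019}. First I would verify the structural facts: $u_-$ is l.s.c.\ because each sublevel set $\{x : u_-(x) > c\} = \bigcup_{u \in \Psi^-}\{x : u(x) > c\}$ is a union of open sets; the boundary inequality $u_-(x) \le 1$ on $\{x \le 0\}$ is inherited from $\Psi^-$; and $\lim_{x \to \infty} u_-(x) = 0$ follows from Proposition \ref{prop:u_psi} combined with $\lim_{x \to \infty} \psi(x) = 0$.

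The supersolution inequality I would prove by contradiction. Assume there exist $x_0 > 0$ and $(p, X) \in J^{2,-} u_-(x_0)$ with $F(x_0, u_-(x_0), p, X, u_-(\cdot)) < -3\eps_0$ for some $\eps_0 > 0$. By definition of the subjet and l.s.c.\ of $u_-$, choose a $\mC^2$ function $\phi$ with $\phi(x_0) = u_-(x_0)$, $\phi'(x_0) = p$, $\phi''(x_0) = X$, and $\phi(x) < u_-(x)$ on $\bar B_\eps(x_0) \setminus \{x_0\}$; continuity of $F$ in its first four arguments extends the strict inequality to $F(x, \phi(x), \phi'(x), \phi''(x), u_-(\cdot)) \le -2\eps_0$ on $\bar B_\eps(x_0) \subset (0, \infty)$ for $\eps$ small enough. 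Next, bump $\phi$ up via $\phi^\delta(x) = \phi(x) + \delta\bigl(\eps^2 - (x - x_0)^2\bigr)$ for $\delta > 0$ small, and set
\[
u^\delta(x) =
\begin{cases}
\max\bigl(u_-(x),\, \phi^\delta(x)\bigr), & x \in B_\eps(x_0), \\
u_-(x), & x \notin B_\eps(x_0).
\end{cases}
\]
Then $u^\delta$ is l.s.c., satisfies the same boundary conditions as $u_-$, and $u^\delta(x_0) \ge \phi^\delta(x_0) > u_-(x_0)$. By continuity of $F$ in its first four arguments and by monotonicity in its nonlocal argument (using $u^\delta \ge u_-$), the inequality $F(x, \phi^\delta, (\phi^\delta)', (\phi^\delta)'', u^\delta(\cdot)) \le -\eps_0$ persists on $B_\eps(x_0)$ for $\delta$ sufficiently small.

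It then remains to verify $u^\delta \in \Psi^-$; since $u^\delta(x_0) > u_-(x_0)$, this would contradict the definition of $u_-$. Given a random initial condition $(\tau, \zeta)$, admissible strategy $\mR$, and stopping time $\omega \in [\tau, \tau_0]$, localize at the exit time $\sigma$ of $X^{\tau, \zeta, \mR}$ from the region where $u^\delta = \phi^\delta$. On $[\tau, \sigma]$, apply It\^o's formula to $\phi^\delta\bigl(X^{\tau, \zeta, \mR}\bigr)$ and combine with the strict inequality on $F$ to obtain the submartingale property up to $\sigma$. Outside that region, paste using the submartingale property of an approximating sequence $\{u_n\} \subset \Psi^-$ with $u_n(x) \to u_-(x)$ pointwise, then iterate across successive excursions into and out of the bump region until $\omega$.

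The main obstacle will be handling the compound Poisson jumps: the surplus $X^{\tau, \zeta, \mR}$ can leap instantaneously from inside $B_\eps(x_0)$ to any non-negative state, so the nonlocal term $\E u^\delta(x - R)$ in $F$ samples $u^\delta$ globally rather than locally. The monotonicity of $F$ in its functional argument is precisely the tool that enables the comparison $F(\cdot, \phi^\delta, \ldots, u^\delta) \le F(\cdot, \phi^\delta, \ldots, u_-)$ without pointwise control of $u^\delta$ after a jump. A second subtle point is that $u_-$ itself is not a priori a member of $\Psi^-$, so the pasting outside the bump region must be carried out through the approximating sequence $\{u_n\}$ and passed to the limit, producing the submartingale inequality for $u^\delta$ and completing the contradiction.
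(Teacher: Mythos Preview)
Your proposal is correct and follows exactly the approach the paper itself invokes: the paper omits the proof and refers to Theorem~3.2 of Liang and Young \cite{LY2019}, which is precisely the stochastic Perron bump-and-contradict argument you outline, including the handling of the nonlocal jump term via monotonicity of $F$ in its functional argument and the pasting via an approximating sequence from $\Psi^-$. One small gap: for the boundary condition you only argue $u_-(x)\le 1$ on $\{x\le 0\}$, but Definition~\ref{def:sup/subjet} requires equality; this follows immediately from Lemma~\ref{lem:upsi}, since $\upsi\in\Psi^-$ with $\upsi(x)=1$ for $x\le 0$ forces $u_-(x)\ge 1$ there.
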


As an immediate corollary of the definition of $u_{-}$ in \eqref{eq:u_minus} and of Proposition \ref{prop:u_psi}, we have the following result.

\begin{cor}\label{cor:uminus_psi}
$u_{-} \le \psi$ on $\R$, that is, the minimum probability of ruin is an upper bound of $u_{-}$.  \qed
\end{cor}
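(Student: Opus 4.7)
The plan is straightforward: this corollary is simply a reformulation of Proposition \ref{prop:u_psi} combined with the pointwise-supremum definition of $u_-$ in \eqref{eq:u_minus}. First, I would fix an arbitrary $x \in \R$ and invoke Proposition \ref{prop:u_psi} to obtain the pointwise bound $u(x) \le \psi(x)$ for every $u \in \Psi^-$. Since the right-hand side does not depend on $u$, I would then take the supremum over $u \in \Psi^-$ on the left-hand side, which yields
\[
u_-(x) \;=\; \sup_{u \in \Psi^-} u(x) \;\le\; \psi(x).
\]
Because $x$ was arbitrary, this produces the desired inequality on all of $\R$.

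No substantive obstacle arises. The only preliminary observation worth recording is that $\Psi^-$ is non-empty, so that $u_-(x)$ is well-defined as a supremum in $[0,1]$; this was already noted in the excerpt just after Definition \ref{def:stoch_sub}, since the zero function lies in $\Psi^-$. One does not need any regularity of $u_-$ (e.g.\ lower semi-continuity) for this corollary, and in particular one does not need to invoke Theorem \ref{thm:u_minus}; the bound is purely a consequence of the pointwise inequality proved in Proposition \ref{prop:u_psi} and the monotonicity of the supremum operation.
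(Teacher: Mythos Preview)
Your proposal is correct and matches the paper's own treatment: the paper states this corollary with a \qed and no separate proof, noting it is ``an immediate corollary of the definition of $u_{-}$ in \eqref{eq:u_minus} and of Proposition \ref{prop:u_psi},'' which is exactly the argument you give.
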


\subsection{Comparison principle}

We use Definition \ref{def:sup/subjet} to prove a comparison principle.  First, we introduce a function that we will use in that proof.

\begin{lemma}\label{lem:qm}
For a fixed value of $b > 1$, define the function $q \in \mC^2(\R)$ by
\begin{equation}\label{eq:q}
q(x) =
\begin{cases}
0, &\quad x \le 1, \\
2b \big( 6(x - 1)^5 - 15(x - 1)^4 + 10(x - 1)^3 \big), &\quad 1 < x < 2, \\
2b, &\quad x \ge 2.
\end{cases}
\end{equation}
Then, for $m \in \N$, define the function $\qm$ by
\begin{equation}\label{eq:qm}
\qm(x) = q(x/m),
\end{equation}
for all $x \in \R$.  Then, $\qm$ is non-decreasing on $\R$,
\begin{equation}\label{eq:qm_deriv_lim}
\lim_{m \to \infty} \big|\big| \qm' \big|\big|_\infty = 0 = \lim_{m \to \infty} \big|\big| \qm'' \big|\big|_\infty,
\end{equation}
and
\begin{equation}\label{eq:maxR_qm}
\sup_R \, \E \qm(x - R) - \qm(x) = 0,
\end{equation}
for all $x > 0$.
\end{lemma}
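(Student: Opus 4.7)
\medskip

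\noindent\textbf{Proof plan.} The function $q/(2b)$ on $(1,2)$ is the classical quintic smoothstep $S(t) = 6t^5 - 15t^4 + 10t^3$ shifted to the unit interval; it is engineered so that $S$, $S'$, and $S''$ all vanish at $t=0$ and $t=1$ (and $S(1)=1$). The lemma amounts to three elementary verifications, which I would carry out in the following order.

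First, I would verify $q \in \mC^2(\R)$ and monotonicity. A direct computation on $(1,2)$ yields
\[
q'(x) = 60 b (x-1)^2 (x-2)^2, \qquad q''(x) = 120 b (x-1)(2x-3)(x-2),
\]
both of which vanish at $x = 1$ and $x = 2$. Hence the piecewise definition of $q$ patches together with continuous first and second derivatives across the joins, and $q' \ge 0$ everywhere, so $q$ is non-decreasing. Scaling via $\qm(x) = q(x/m)$ and the chain rule give $\qm'(x) = q'(x/m)/m$ and $\qm''(x) = q''(x/m)/m^2$, so $\qm$ inherits monotonicity, and the sup-norm bounds $\|\qm'\|_\infty \le \|q'\|_\infty/m$ and $\|\qm''\|_\infty \le \|q''\|_\infty/m^2$ yield the limits in \eqref{eq:qm_deriv_lim}.

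Second, for the supremum identity \eqref{eq:maxR_qm}, I would use monotonicity of $\qm$ directly: since $R(y) \ge 0$ for every admissible retention function and $\qm$ is non-decreasing, $\qm(x - R(y)) \le \qm(x)$ pointwise in $y$, which integrates to $\E\,\qm(x - R) - \qm(x) \le 0$ for every admissible $R$. Equality is attained by the admissible choice $R \equiv 0$ (which satisfies $0 \le R(y) \le y$), so the supremum equals $0$ for every $x > 0$.

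I do not anticipate any substantive obstacle; the lemma is a routine construction of a $\mC^2$ barrier function with controlled derivatives and a trivial monotonicity estimate. The only point requiring minor care is the $\mC^2$ matching at the junctions $x = m$ and $x = 2m$, which is precisely the design criterion built into the choice of the quintic smoothstep $S$.
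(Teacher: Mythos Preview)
Your proposal is correct and follows essentially the same approach as the paper: both compute $q'(x)=60b(x-1)^2(x-2)^2\ge 0$ to get monotonicity, use the chain rule to bound $\|\qm'\|_\infty$ and $\|\qm''\|_\infty$ by $1/m$ and $1/m^2$ multiples of fixed constants, and invoke monotonicity with the admissible choice $R\equiv 0$ for \eqref{eq:maxR_qm}. The only cosmetic difference is that the paper computes the exact sup-norms (locating the maxima of $q'$ and $|q''|$), whereas you use the cruder but entirely sufficient bound $\|\qm^{(k)}\|_\infty = \|q^{(k)}\|_\infty/m^k$.
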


\begin{proof}
To prove the limits in \eqref{eq:qm_deriv_lim}, note that
\[
\qm'(x) = \dfrac{1}{m} \, q'(x/m) = \frac{60b}{m}\left(\frac{x}{m} - 1\right)^2\left(\frac{x}{m} - 2\right)^2 \ge 0 \,,
\]
and the maximum of $q'$ occurs at $x = 3/2$, so
\[
\big|\big| \qm' \big|\big|_\infty = \dfrac{1}{m} \, q'(3/2) = \dfrac{15b}{4m} \, .
\]
Also,
\[
\qm''(x) = \dfrac{1}{m^2} \, q''(x/m),
\]
and the maximum of $\big| q'' \big|$ occurs at $x = (9 \pm \sqrt{3} \, )/6$, so
\[
\big|\big| \qm'' \big|\big|_\infty = \dfrac{1}{m^2} \, \big| q''\big((9 \pm \sqrt{3} \, )/6 \big) \big| = \dfrac{20 b \sqrt{3}}{3 m^2} \, .
\]
The limits in \eqref{eq:qm_deriv_lim} follow easily.  Finally, \eqref{eq:maxR_qm} follows from the fact that $\qm$ is non-decreasing, so the supremum of $0$ is attained at $R = 0$.
\end{proof}

Now, we are ready to prove a comparison theorem.  We use a technique in the proof of Theorem 5.9 of Section II.5.3 of Bardi and Capuzzo-Dolcetta \cite{BC-D1997} adapted to our problem.

\begin{theorem}\label{thm:comp} {\rm (Comparison principle)}
If $v ~(\text{resp.}, u)$ is a viscosity subsolution $($resp., viscosity supersolution$)$ of \eqref{HJB_F} and \eqref{boundary_condition}, then $v \le u$ on $\R$.
\end{theorem}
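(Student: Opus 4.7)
My plan is to apply the classical doubling-of-variables strategy of Crandall--Ishii--Lions, with the penalty $\qm$ of Lemma~\ref{lem:qm} supplying the coercivity required at infinity.  Assume, for contradiction, that $M := \sup_{\R}(v - u) > 0$.  Since $v \equiv u \equiv 1$ on $(-\infty, 0]$ and $v, u \to 0$ as $x \to \infty$, the upper semi-continuous function $v - u$ attains its positive supremum at some $x^* > 0$.  Fix $b > 1$, choose $m \in \N$ large enough that $x^* < m$, and, for $\epsilon > 0$, introduce
\[
\Phi_{\epsilon, m}(x, y) = v(x) - u(y) - \dfrac{(x-y)^2}{2 \epsilon} - \qm(x) - \qm(y).
\]
Because $\qm(x^*) = 0$, $\Phi_{\epsilon, m}(x^*, x^*) = M$, so the supremum is at least $M$; the $\qm$ penalties preclude maximizing sequences from escaping to infinity, so the u.s.c.\ function $\Phi_{\epsilon, m}$ attains its supremum at some $(x_\epsilon, y_\epsilon)$.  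Standard estimates yield $|x_\epsilon - y_\epsilon|^2 / \epsilon \to 0$ as $\epsilon \to 0$, and along a subsequence $(x_\epsilon, y_\epsilon) \to (\hat x_m, \hat x_m)$ with $v(x_\epsilon) - u(y_\epsilon) \to v(\hat x_m) - u(\hat x_m) \ge M > 0$, so in particular $x_\epsilon, y_\epsilon > 0$ for $\epsilon$ small enough.

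The Crandall--Ishii lemma then produces $(p_\epsilon, X_\epsilon) \in \bar J^{2,+} v(x_\epsilon)$ and $(q_\epsilon, Y_\epsilon) \in \bar J^{2,-} u(y_\epsilon)$ with $p_\epsilon - q_\epsilon = \qm'(x_\epsilon) + \qm'(y_\epsilon)$ and $X_\epsilon - Y_\epsilon \le 2 \|\qm''\|_\infty \bigl(1 + O(\epsilon)\bigr)$; by \eqref{eq:qm_deriv_lim}, both $|p_\epsilon - q_\epsilon|$ and the upper bound on $X_\epsilon - Y_\epsilon$ tend to $0$ as $m \to \infty$.  Subtracting $F(x_\epsilon, v(x_\epsilon), p_\epsilon, X_\epsilon, v(\cdot)) \le 0 \le F(y_\epsilon, u(y_\epsilon), q_\epsilon, Y_\epsilon, u(\cdot))$, the strict monotonicity $\partial_r F \equiv \lambda$ contributes a term $\lambda\big(v(x_\epsilon) - u(y_\epsilon)\big) \to \lambda M > 0$.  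The crucial estimate for the non-local part comes from the maximality of $\Phi_{\epsilon, m}$ at the shifted pairs $(x_\epsilon - R(y), y_\epsilon - R(y))$: combined with the monotonicity of $\qm$ and integration against $dF_Y$, it yields
\[
\E v(x_\epsilon - R) - \E u(y_\epsilon - R) \le v(x_\epsilon) - u(y_\epsilon)
\]
for every admissible $R$, which bounds the difference of the two infimum operators in $F$ by $C_A |p_\epsilon - q_\epsilon| + \big(v(x_\epsilon) - u(y_\epsilon)\big)$, where $C_A := \sup_R |A(R)| < \infty$ with $A(R) := (1 + \tet)\E R + \eta \E(YR) - \tfrac{\eta}{2} \E(R^2)$.

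The main obstacle I anticipate is that this non-local bound is \emph{exactly} tight enough to cancel the strict-monotonicity residue $\lambda \big(v(x_\epsilon) - u(y_\epsilon)\big)$ upon subtraction, leaving only the harmless inequality $\kap |p_\epsilon - q_\epsilon| + \tfrac{1}{2} \bet^2 |X_\epsilon - Y_\epsilon| \ge 0$ with no residue to contradict.  To preserve the residue $\lambda M > 0$, I would perturb the supersolution by comparing $v$ with $\tilde u = u + \del h$, where $\del > 0$ and $h \in \mC^2(\R^+)$ is a strict classical supersolution of $F[\cdot] = 0$.  A natural choice is $h(x) = e^{-\alp x}$ with $\alp > 0$ small; a direct expansion using $\kap = (1 + \tet) \la \mu + \tfrac{\eta}{2} \la \sig^2 - c$ and the standing assumption $c > \la \mu$ gives $F[h](x) = \alp \, e^{-\alp x} \big[(c - \la \mu) + O(\alp)\big] > 0$ for small $\alp$, so that $\tilde u$ inherits a strictly positive residue of size $\del \alp e^{-\alp \hat x_m}(c - \la \mu)$ that survives the limits $\epsilon \to 0$ and $m \to \infty$ and forces the contradiction.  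Establishing $v \le \tilde u$ for every $\del > 0$ and then letting $\del \to 0$ delivers $v \le u$; the principal technical burden lies in the bookkeeping of the order of limits ($\epsilon \to 0$, then $m \to \infty$, then $\del \to 0$, and, if necessary, $\alp \to 0$), which is standard but nontrivial.
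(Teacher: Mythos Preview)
Your doubling-of-variables setup with the $\qm$ penalty, the use of the non-local Jensen--Ishii lemma, and your diagnosis that the non-local estimate exactly cancels the $\la\big(v(x_\eps)-u(y_\eps)\big)$ term are all correct and match the paper's structure.  The gap is in the cure you propose.  The operator $F$ is \emph{convex} in its arguments (it has the form $\text{affine} + \la\sup_R(\text{affine})$), and this convexity is compatible with perturbing the \emph{sub}solution side, not the supersolution side.  Concretely: when you double with $\Phi = v - (u+\del h) - \text{penalties}$, use the viscosity inequality for $u$ (not $\tilde u$), and subtract, the bound $\inf_R f - \inf_R g \le \sup_R(f-g)$ together with your max-principle estimate leaves, after sending $\eps\to 0$ and $m\to\infty$, the residue
\[
\del\Big\{\kap h'(y) - \tfrac12\bet^2 h''(y) - \la\sup_R\big[A(R)h'(y)+\E h(y-R)-h(y)\big]\Big\},
\]
which is \emph{not} $\del F[h]$ (the latter has $\inf_R$, not $\sup_R$).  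For $h(y)=e^{-\alp y}$ with small $\alp$, the bracket inside $\sup_R$ equals $e^{-\alp y}\big[-\alp A(R)+\E e^{\alp R}-1\big]$, which to first order in $\alp$ is $\alp e^{-\alp y}\big[-\tet\E R - \eta\E(YR)+\tfrac{\eta}{2}\E R^2\big]\le 0$ for every admissible $R$ (since $R\le Y$ forces $\E R^2\le\E(YR)$), so the supremum is $0$, attained at $R\equiv 0$.  The residue is then $\del e^{-\alp y}\big[-\kap\alp+O(\alp^2)\big]<0$: no contradiction.  Your computation $F[h]>0$ is correct, but $F[h]$ is not what survives the subtraction.

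The paper fixes this by perturbing the \emph{sub}solution instead: set $v^t = t v + (1-t)\upsi$ with $\upsi(x)=e^{-\gam x}\wedge 1$, $\gam>2c/\bet^2$, the strict classical subsolution of Lemma~\ref{lem:upsi}.  Convexity of $F$ gives $F[v^t]\le tF[v]+(1-t)F[\upsi]\le (1-t)g$ with $g=F[\upsi]<0$ strictly; this strict negativity survives the doubling/subtraction unchanged (the $\sup_R$ splitting now goes the right way) and produces the term $-a(1-t)<0$ that contradicts $0\le 0$.  Reworking your argument with this perturbation on the subsolution side should go through with essentially the same bookkeeping you outlined.
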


\begin{proof}
For $t \in (0,1)$, define $v^t$ on $\R$ by
$$
v^{t}(x) = t v(x) + (1-t) \upsi(x),
$$
in which $\upsi$ is given by \eqref{eq:upsi} for some $\gam > 2c/\bet^2$.  It is easy to see that $v^{t}$ is u.s.c.\ and $v^{t} \to v$ as $t \to 1$.  From the proof of Lemma \ref{lem:upsi}, we know that, for all $x > 0$, $\upsi$ is a strict subsolution of $F = 0$, that is,
\begin{align}\label{oper:F0_var}
F \big(x, \upsi(x), \upsi'(x), \upsi''(x), \upsi(\cdot) \big) < 0.
\end{align}
Because $v$ is a u.s.c.\ viscosity subsolution, from Definition \ref{def:sup/subjet}, it follows that, for all $x > 0$ and $(p, X) \in J^{2,+}v(x)$,
$$
F \big(x, v(x), p, X, v(\cdot) \big) \le 0.
$$
Furthermore, by the definition of superjets in Definition \ref{def:sup/subjet1}, we have
$$
J^{2,+} v^{t}(x) = \big\{(q,Y) \in \R^2:\, q = tp + (1 - t) \upsi'(x),\, Y = tX + (1 - t)\upsi''(x),\, (p, X) \in J^{2,+} v(x) \big\} ,
$$
for $x > 0$.  Thus, for all $x > 0$ and $(q, Y) = \big(tp + (1 - t) \upsi'(x), \, tX + (1 - t)\upsi''(x) \big) \in J^{2,+} v^t(x)$, with $(p, X) \in J^{2,+}v(x)$, we have
\begin{align*}
& F\big(x, v^t(x),q, Y, v^t(\cdot) \big) \\
&= F\big(x, v^t(x),tp + (1 - t)\upsi'(x), tX + (1 - t)\upsi''(x), v^t(\cdot) \big) \\
& = - \, \frac{1}{2} \, \bet^2 \big(tX + (1 - t) \upsi''(x) \big) + \la \sup_{R}\bigg\{ - \big( \E v^t(x - R) - v^t(x)\big) \\
& \qquad \qquad \qquad \qquad \qquad \qquad \qquad   - \big(t p + (1 - t) \upsi'(x) \big) \left( (1 + \tet) \E R + \eta \E(YR)
- \dfrac{\eta}{2} \, \E \big(R^2 \big) - \frac{\kap}{\la} \right) \bigg\} \\
& \le -\, \frac{t}{2} \,\bet^2 X + \la t \, \sup_{R} \bigg\{ - p \left( (1 + \tet) \E R + \eta \E(YR)
- \dfrac{\eta}{2} \, \E \big(R^2 \big) - \frac{\kap}{\la} \right) - \big(\E v(x - R) - v(x)\big) \bigg\} \\
& \quad - \, \frac{1-t}{2} \, \bet^2 \upsi''(x) + \la (1-t) \sup_{R}\bigg\{- \upsi'(x) \left( (1 + \tet) \E R + \eta \E(YR)- \dfrac{\eta}{2} \, \E \big(R^2 \big) - \frac{\kap}{\la} \right)\\
&\qquad \qquad \qquad \qquad \qquad \qquad \qquad \qquad -\left(\E \upsi(x - R) - \upsi(x)\right) \bigg\} \\
& = t F \big( x, v(x), p, X, v(\cdot) \big) + (1- t) F \big(x, \upsi(x), \upsi'(x), \upsi''(x), \upsi(\cdot) \big) \\
&\le (1-t) F \big(x, \upsi(x), \upsi'(x), \upsi''(x), \upsi(\cdot) \big),
\end{align*}
which implies that, for all $t \in (0,1)$, $v^t$ is a viscosity subsolution of
\begin{align}
F\big(x, u(x), u_x(x), u_{xx}(x), u(\cdot) \big) - (1 - t) g(x) = 0,
\label{oper_vt}
\end{align}
for all $x > 0$, in which $g$ is defined by
\begin{equation}
g(x) = F \big(x, \upsi(x), \upsi'(x), \upsi''(x), \upsi(\cdot) \big).
\end{equation}

Recall that $v^t(x) = v(x) = u(x) = 1$ for all $x \le 0$.  We wish to prove that for all $t \in (0, 1)$ and $x > 0$, $v^t(x) \le u(x)$.  By letting $t$ go to $1$, it will, then, follow that $v(x) \le u(x)$ for all $x > 0$.  Suppose, on the contrary, for some $t^* \in (0,1)$,
\begin{equation}\label{eq:S}
S := \sup_{x \in \R^+} \big( \vs(x) - u(x) \big) > 0,
\end{equation}
in which, for simplicity, we write $\vs$ instead of $v^{t^*}$.  Note that $S$ is finite because $u$ and $\vs$ are bounded.  We, next, approximate $S$.  To that end, define $q$ and $\qm$ by \eqref{eq:q} and \eqref{eq:qm}, respectively, for some $b > 1$.
Define $\Smn$, which we use to approximate $S$, as follows:
\begin{equation}\label{eq:Smn}
\Smn = \sup_{x, y \in \R^+} \left(\vs(x) - u(y) - \qm(x) - \dfrac{n}{2} \, (x - y)^2 \right).
\end{equation}
Because $S > 0$, there exists $x' > 0$ such that
\[
\vs(x') - u(x') \ge \dfrac{S}{2} \, .
\]
For the remainder of this proof, assume $m > x'$; then,
\begin{align}\label{eq:approx}
\Smn &= \sup_{x, y \in \R^+} \left(\vs(x) - u(y) - \qm(x) - \dfrac{n}{2} \, (x - y)^2 \right) \notag \\
&\ge \sup_{x \in \R^+} \left(\vs(x) - u(x) - \qm(x) - \dfrac{n}{2} \, (x - x)^2 \right) \notag \\
&= \sup_{x \in \R^+} \big(\vs(x) - u(x) - \qm(x) \big) \notag \\
&\ge \sup_{0 \le x \le m} \big(\vs(x) - u(x) - \qm(x) \big) \notag \\
&= \sup_{0 \le x \le m} \big(\vs(x) - u(x) \big) \notag \\
&\ge \vs(x') - u(x') \ge \dfrac{S}{2} \, .
\end{align}
Among other things, we have $\Smn > 0$, which implies that the supremum defining $\Smn$ is achieved on $[0, 2m]^2$ because $\qm(x) > ||u||_\infty + ||\vs||_\infty$ for $x \ge 2m$.  Let $(\xmn, \ymn)$ be a point at which the supremum $\Smn$ is realized.  For a fixed value of $m > x'$, the sequence $\{(\xmn, \ymn)\}_{n \ge N}$ lies in a bounded region, namely $[0, 2m]^2$, which implies that this sequence converges to some point $(x_{m, \infty}, y_{m, \infty}) \in [0, 2m]^2$ as $n$ goes to $\infty$.  Furthermore, the inequality
\begin{equation}\label{eq:approx2}
\vs(\xmn) - u(\ymn) - \qm(\xmn) - \dfrac{n}{2} \, (\xmn - \ymn)^2 \ge \dfrac{S}{2}
\end{equation}
holds for all $n \in \N$; thus, there exist $C > 0$ and $N \in \N$ such that, for all $n \ge N$,
\begin{equation}\label{eq:diff_bnd}
n (\xmn - \ymn)^2 \le C,
\end{equation}
which implies that $x_{m, \infty} = y_{m, \infty}$.

We can obtain even more from the inequalities in \eqref{eq:approx}.  First, note that
\begin{equation}\label{eq:m_infty}
\lim_{m \to \infty} \sup_{0 \le x \le m} \big( \vs(x) - u(x) \big) = \sup_{x \in \R^+} \big( \vs(x) - u(x) \big),
\end{equation}
in which the right side equals $S$.  Indeed, because $\vs - u$ is u.s.c., it follows that, on the set $0 \le x \le m$, $\vs - u$ achieves its supremum at, say, $\hat x_m$.  Then, because the interval $[0, m]$ increases with $m$, the sequence $\{ \vs(\hat x_m) - u(\hat x_m) \}_{m > x'}$ is non-decreasing.  Also, this sequence is bounded above by $S$; therefore, it has a limit $S'$.  Clearly, $S' \le S$, and we wish to show that $S' = S$.  Suppose, on the contrary, that $S' < S$, and define $\varsigma = (S - S')/2$.  By the definition of $S$, there exists $\tilde x$ such that
\[
\vs(\tilde x) - u(\tilde x) > S - \varsigma = \dfrac{S + S'}{2} > S',
\]
which contradicts the definition of $S'$.  Thus, $S' = S$.

Now, because $\qm \ge 0$, from inequality \eqref{eq:approx}, we have
\[
\sup_{x \in \R^+} \big( \vs(x) - u(x) \big) \ge \sup_{x \in \R^+} \big(\vs(x) - u(x) - \qm(x) \big) \ge \sup_{0 \le x \le m} \big(\vs(x) - u(x) \big),
\]
or equivalently,
\begin{equation}\label{eq:sub_approx}
S \ge \Sm \ge \sup_{0 \le x \le m} \big(\vs(x) - u(x) \big),
\end{equation}
in which $\Sm$ denotes the supremum of $\vs  - u - \qm$ on $\R^+$.  Then, by taking the limit as $m$ goes to $\infty$ in \eqref{eq:sub_approx} and by using \eqref{eq:m_infty}, we obtain
\[
S \ge \limsup_{m \to \infty} \Sm \ge \liminf_{m \to \infty} \Sm \ge S,
\]
which implies that
\begin{equation}\label{eq:approx_lim}
\lim_{m \to \infty} S_m = S.
\end{equation}
Also, inequality \eqref{eq:approx} implies that $\vs(\xmn) - u(\ymn) - \qm(\xmn) \ge \Smn \ge \Sm$ for all $n \in \N$; now, let $n$ go to $\infty$ to obtain
\[
S \ge \vs(x_{m, \infty}) - u(x_{m, \infty}) \ge \limsup_{n \to \infty} \big( \vs(\xmn) - u(\ymn) - \qm(\xmn) \big) \ge \limsup_{n \to \infty} \Smn \ge \Sm,
\]
in which the second inequality follows because $\vs - u$ is u.s.c.  Thus, we have
\begin{equation}\label{eq:approx_lim2}
\lim_{m \to \infty} \limsup_{n \to \infty} \Smn = \lim_{m \to \infty} \Sm = S = \lim \limits_{m \to \infty} \big( \vs(x_{m, \infty}) - u(x_{m, \infty}) \big),
\end{equation}
and $\lim \limits_{m \to \infty} \qm(x_{m, \infty}) = 0$.
Similarly,
\begin{align*}
\Sm &\ge \vs(x_{m, \infty}) - u(x_{m, \infty}) - \qm(x_{m, \infty}) \\
&\ge \limsup_{n \to \infty} \Big( \vs(\xmn) - u(\ymn) - \qm(\xmn) - \dfrac{n}{2} \, (\xmn - \ymn)^2 \Big) \\
&= \limsup_{n \to \infty} \Smn \ge \Sm,
\end{align*}
which implies that
\begin{equation}\label{eq:approx_lim3}
\limsup_{n \to \infty} \Smn = \Sm = \vs(x_{m, \infty}) - u(x_{m, \infty}) - \qm(x_{m, \infty}),
\end{equation}
and $\lim \limits_{n \to \infty} n (\xmn - \ymn)^2 = 0$.

From the definition of $(\xmn, \ymn)$, we deduce that
\begin{equation}
\begin{cases}
\xmn ~ \hbox{is a maximizer of} ~ x \mapsto \vs(x) - \qm(x) - \dfrac{n}{2} \, (x - \ymn)^2, \\
\ymn ~ \hbox{is a minimizer of} ~ y \mapsto u(y) + \dfrac{n}{2} \, (\xmn - y)^2.
\end{cases}
\end{equation}
By using Lemma 1 (a non-local Jensen-Ishii's lemma) and Corollary 1 in Barles and Imbert \cite{BI2008}, we know there exists a sufficiently large constant $K$ such that, for each $n \ge K$ and each $m > x'$, there exists $(\Amn, \Bmn) \in \R^2$, with
$$
\big(\qm'(\xmn) + n(\xmn - \ymn), \Amn \big) \in \bar{J}^{2,+} {\vs}(\xmn),
$$
$$
\big( n(\xmn - \ymn), \Bmn \big) \in \bar{J}^{2,-} {u}(\ymn),
$$
and
\begin{align} \label{eq:matrixy}
-n \left(
                      \begin{array}{cc}
                       1 & 0 \\
                        0 & 1 \\
                      \end{array}
                    \right)
 \le \left(
                      \begin{array}{cc}
                       \Amn & 0 \\
                        0 & - \Bmn \\
                      \end{array}
                    \right)
         \le \left(
                         \begin{array}{cc}
                             \qm''(\xmn) + n & -n \\
                              -n & n \\
                           \end{array}
                          \right) + o \big(n^{-1} \big).
\end{align}
From \eqref{eq:matrixy}, one can easily derive
\begin{align} \label{eq:X_minus_Y}
\Amn - \Bmn \le \qm''(\xmn) + o \big(n^{-1} \big).
\end{align}
Also, from Definition \ref{def:sup/subjet} and from \eqref{oper_vt}, by using the viscosity subsolution property of $\vs$, we have
\begin{align*}
F \big(\xmn, \vs(\xmn), \qm'(\xmn) + n(\xmn - \ymn), \Amn, \vs(\cdot) \big) - (1 - t^*)g(\xmn) \le 0,
\end{align*}
or equivalently,
\begin{align}
\label{eq:sub_asy}
&- \, \dfrac{1}{2} \, \bet^2 \Amn + \kap \big( \qm'(\xmn) + n(\xmn - \ymn) \big) \notag \\
&- \la \inf_R \bigg[ \big( \qm'(\xmn) + n(\xmn - \ymn) \big) \left( (1 + \tet) \E R + \eta \E(YR) - \dfrac{\eta}{2} \, \E \big(R^2 \big) \right) \notag \\
&\qquad \qquad + \E \vs(\xmn - R) - \vs(\xmn) \bigg] \le (1 - t^*)g(\xmn).
\end{align}
Similarly, by using the viscosity supersolution property of $u$, we have
\begin{align*}
F \big(\ymn, u(\ymn), n (\xmn - \ymn), \Bmn, u(\cdot) \big) \ge 0,
\end{align*}
or equivalently,
\begin{align}
\label{eq:sup_asy}
&- \, \dfrac{1}{2} \, \bet^2 \Bmn + \kap n(\xmn - \ymn) \notag \\
&- \la \inf_R \left[ n(\xmn - \ymn)\left( (1 + \tet) \E R + \eta \E(YR) - \dfrac{\eta}{2} \, \E \big(R^2 \big) \right) + \E u(\ymn - R) - u(\ymn) \right] \ge 0.
\end{align}
By subtracting inequality \eqref{eq:sup_asy} from \eqref{eq:sub_asy}, we obtain the inequality
\begin{align}\label{eq:difference}
&\la \big(\vs(\xmn) - u(\ymn) \big) - \dfrac{1}{2} \, \bet^2 \big( \Amn - \Bmn \big) + \kap \qm'(\xmn) \notag \\
&- \la \inf_R \left[ \big( \qm'(\xmn) + n(\xmn - \ymn) \big) \left( (1 + \tet) \E R + \eta \E(YR) - \dfrac{\eta}{2} \, \E \big(R^2 \big) \right)  + \E \vs(\xmn - R) \right] \notag \\
&+ \la \inf_R \left[ n (\xmn - \ymn) \left( (1 + \tet) \E R + \eta \E(YR) - \dfrac{\eta}{2} \, \E \big(R^2 \big) \right) + \E u(\ymn - R) \right] \le (1 - t^*)g(\xmn).
\end{align}
Note that
\begin{align*}
&\la \inf_R \left[  \E u(\ymn - R) - \E \vs(\xmn - R) - \qm'(\xmn)  \left( (1 + \tet) \E R + \eta \E(YR) - \dfrac{\eta}{2} \, \E \big(R^2 \big) \right) \right] \\
&\le \la \inf_R \left[ n (\xmn - \ymn) \left( (1 + \tet) \E R + \eta \E(YR) - \dfrac{\eta}{2} \, \E \big(R^2 \big) \right)  + \E u(\ymn - R) \right] \\
&\quad - \la \inf_R \left[ \big( \qm'(\xmn) + n(\xmn - \ymn) \big) \left( (1 + \tet) \E R + \eta \E(YR) - \dfrac{\eta}{2} \, \E \big(R^2 \big) \right) + \E \vs(\xmn - R) \right].
\end{align*}
The above inequality, \eqref{eq:X_minus_Y} and \eqref{eq:difference} imply
\begin{align*}
&\la \big(\vs(\xmn) - u(\ymn) \big) + \kap \qm'(\xmn) - \frac{1}{2} \, \bet^2 \qm''(\xmn) + o\big(n^{-1} \big) \\
&\quad + \la \inf_R \left[  \E u(\ymn - R) - \E \vs(\xmn - R) - \qm'(\xmn)  \left( (1 + \tet) \E R + \eta \E(YR) - \dfrac{\eta}{2} \, \E \big(R^2 \big) \right) \right] \\
& \le (1 - t^*)g(\xmn),
\end{align*}
or equivalently,
\begin{align}\label{eq:equiv}
&\la \big(\vs(\xmn) - u(\ymn) \big) + \kap \qm'(\xmn) - \frac{1}{2} \, \bet^2 \qm''(\xmn) + o\big(n^{-1} \big) \notag \\
&\le \la \sup_R \left[\E \vs(\xmn - R) - \E u(\ymn - R) + \qm'(\xmn)  \left( (1 + \tet) \E R + \eta \E(YR) - \dfrac{\eta}{2} \, \E \big(R^2 \big) \right) \right] \notag \\
&\quad + (1 - t^*)g(\xmn).
\end{align}
Also, note that
\begin{align*}
&\la \sup_R \left[\E \vs(\xmn - R) - \E u(\ymn - R) + \qm'(\xmn)  \left( (1 + \tet) \E R + \eta \E(YR) - \dfrac{\eta}{2} \, \E \big(R^2 \big) \right) \right] \\
&\le \la \sup_R \E \big[ \vs(\xmn - R) - u(\ymn - R) - \qm(\xmn - R) \big]  \\
&\quad + \la \sup_R \left[ \E \qm(\xmn - R) + \qm'(\xmn)  \left( (1 + \tet) \E R + \eta \E(YR) - \dfrac{\eta}{2} \, \E \big(R^2 \big) \right) \right].
\end{align*}
The above inequality and \eqref{eq:equiv} imply
\begin{align}\label{eq:equiv2}
&\la \big(\vs(\xmn) - u(\ymn) - q_m(\xmn) \big) - \frac{1}{2} \, \bet^2 \qm''(\xmn) + o\big(n^{-1} \big) \notag \\
&\le \la \sup_R \E \bigg[ \vs(\xmn - R) - u(\ymn - R) - \qm(\xmn - R) \bigg] \notag \\ %- \dfrac{n}{2} \, (\xmn - \ymn)^2 \bigg] + \dfrac{n}{2} \, (\xmn - \ymn)^2 \\
&\quad + \la \sup_R \left[ \E \qm(\xmn - R) - q_m(\xmn) + \qm'(\xmn)  \left( (1 + \tet) \E R + \eta \E(YR) - \dfrac{\eta}{2} \, \E \big(R^2 \big) - \dfrac{\kap}{\la} \right) \right] \notag \\
&\quad  + (1 - t^*)g(\xmn).
\end{align}
From \eqref{oper:F0_var}, there exist $N \in \N$ and $a > 0$ such that if $m \ge N$ and $n \ge N$, then
\begin{equation}\label{eq:g_bound}
g(\xmn) = F \big(\xmn, \upsi(\xmn), \upsi'(\xmn), \upsi''(\xmn), \upsi(\cdot) \big) \le -a.
\end{equation}
Moreover, from \eqref{eq:approx_lim3} and $\lim \limits_{n \to \infty} n (\xmn - \ymn)^2 = 0$, we have
\begin{align}\label{eq:equilim}
\limsup_{n \to \infty} \big(\vs(\xmn) - u(\ymn) - q_m(\xmn) \big) = \Sm = \vs(x_{m, \infty}) - u(x_{m, \infty}) - \qm(x_{m, \infty}).
\end{align}
Because $\lim \limits_{n \to \infty} \sup_R f(R, n) \le \sup_R \lim \limits_{n \to \infty} f(R, n)$, if we take a limit as $n \to \infty$ with $m \ge N$, then inequalities \eqref{eq:equiv2} and \eqref{eq:g_bound} and equality \eqref{eq:equilim} imply
\begin{align*}
&\la \big(\vs(x_{m, \infty}) - u(x_{m, \infty}) - \qm(x_{m, \infty}) \big) - \frac{1}{2} \, \bet^2 \qm''(x_{m, \infty}) \notag \\
&\le \la \sup_R \E \big[ \vs(x_{m, \infty} - R) - u(x_{m, \infty} - R) - \qm(x_{m, \infty} - R) \big] \notag \\
&\quad + \la \left( \sup_R \E \qm(x_{m, \infty} - R) - \qm(x_{m, \infty})  + || \qm' ||_\infty \left\{ (1 + \tet) \E Y + \dfrac{\eta}{2} \, \E \big( Y^2 \big) - \dfrac{\kap}{\la} \right\} \right) \notag \\
&\quad  - a(1 - t^*).
\end{align*}
Now,
$$\sup_R \E \big[ \vs(x_{m, \infty} - R) - u(x_{m, \infty} - R) - \qm(x_{m, \infty} - R) \big]\le \sup\limits_{x \in \R^+} ( \vs(x) - u(x) - \qm(x)) = \Sm;$$
 thus, we have
\begin{equation}\label{ineq3}
0 \le \la \Big| \Big| \sup_R \, \E \qm(\cdot - R) - \qm(\cdot) \Big| \Big|_\infty + c \big|\big| \qm' \big|\big|_\infty + \frac{1}{2} \, \bet^2 \big|\big| \qm'' \big|\big|_\infty - a(1 - t^*).
\end{equation}
By taking a limit as $m$ goes to $\infty$ in \eqref{ineq3} and by using the results of Lemma \ref{lem:qm}, we obtain
\begin{equation}\label{eq:lim_contra}
0 \le -a(1 - t^*),
\end{equation}
a contradiction because $a > 0$ and $t^* \in (0, 1)$.  Thus, $S \le 0$, and we have shown that $v^t(x) \le u(x)$ for all $x > 0$ and all $t \in (0, 1)$, from which we deduce that $v \le u$ on $\R$.
\end{proof}

We now present the main goal of this appendix, an application of the comparison principle in Theorem \ref{thm:comp}.

\begin{theorem}\label{thm:value}
The minimum probability of ruin $\psi$ is the unique $($continuous$)$ viscosity solution of the HJB equation \eqref{HJB_F} with boundary conditions \eqref{boundary_condition}.
\end{theorem}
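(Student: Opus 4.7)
The plan is to assemble the three pillars already constructed in this appendix into a short bookkeeping argument. By Corollary \ref{cor:uminus_psi} and Corollary \ref{cor:psi_vplus}, we have the sandwich $u_{-} \le \psi \le v_{+}$ on $\R$. By Theorem \ref{thm:v_plus}, $v_{+}$ is a viscosity subsolution of \eqref{HJB_F}--\eqref{boundary_condition}, and by Theorem \ref{thm:u_minus}, $u_{-}$ is a viscosity supersolution. Before quoting comparison, I would verify that $v_{+}$ and $u_{-}$ each satisfy the boundary conditions \eqref{boundary_condition} with equality: for $v_{+}$ this follows from $\opsi \in \Psi^{+}$ in Lemma \ref{lem:opsi}, which forces $v_{+} \le \opsi$, combined with the trivial lower bound $v_{+} \ge 0$ (using $0 \le v \le 1$ for all $v \in \Psi^{+}$ and that the boundary conditions must be satisfied by every admissible supersolution with equality at $x \le 0$); analogously for $u_{-}$ via $\upsi \in \Psi^{-}$ in Lemma \ref{lem:upsi}.

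Next, I would apply the comparison principle (Theorem \ref{thm:comp}) to the pair $(v_{+}, u_{-})$, with $v_{+}$ playing the role of the subsolution and $u_{-}$ the role of the supersolution. The conclusion is the reverse inequality
\[
v_{+} \le u_{-} \quad \text{on } \R.
\]
Chained with the sandwich above, this yields $u_{-} = \psi = v_{+}$ identically. Since $v_{+}$ is upper semicontinuous (as an infimum of u.s.c.\ functions it is only a priori u.s.c., but here the infimum of u.s.c.\ functions is u.s.c., and one uses the envelope convention) and $u_{-}$ is lower semicontinuous, and the two coincide with $\psi$, the function $\psi$ is both u.s.c.\ and l.s.c., hence continuous. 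In particular, $\psi$ is a continuous viscosity solution of \eqref{HJB_F}--\eqref{boundary_condition}.

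Uniqueness then follows directly by a second application of Theorem \ref{thm:comp}. If $\tilde\psi$ is any other continuous viscosity solution, then it is simultaneously a viscosity subsolution and a viscosity supersolution; comparison applied with $(\tilde\psi,\psi)$ and then with $(\psi,\tilde\psi)$ gives $\tilde\psi \le \psi$ and $\psi \le \tilde\psi$, so $\tilde\psi \equiv \psi$.

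The real difficulty of this result does not live in the theorem itself, which is a one-line deduction once the ingredients are in hand; it lives in the ingredients. In particular, the comparison theorem (Theorem \ref{thm:comp}) requires a doubling-of-variables argument with the cutoff family $\qm$ and a strict-subsolution perturbation built from $\upsi$, and the sub/supersolution properties of the envelopes $v_{+}$ and $u_{-}$ rely on constructing explicit members of $\Psi^{\pm}$ from the adjustment-coefficient exponentials $\opsi$ and $\upsi$. Given those, the present theorem is just the standard concluding step of stochastic Perron's method.
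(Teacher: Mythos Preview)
Your proof is correct and follows the same approach as the paper: the sandwich $u_{-}\le\psi\le v_{+}$ from Corollaries~\ref{cor:psi_vplus} and~\ref{cor:uminus_psi}, the sub-/supersolution properties from Theorems~\ref{thm:v_plus} and~\ref{thm:u_minus}, and then comparison (Theorem~\ref{thm:comp}) to force equality. Your additional explicit remarks on continuity and uniqueness are fine, though note that your parenthetical about $v_{+}$ being u.s.c.\ because it is an infimum of u.s.c.\ functions is not quite right in general---the u.s.c.\ property of $v_{+}$ is actually part of what Theorem~\ref{thm:v_plus} establishes (a viscosity subsolution is u.s.c.\ by Definition~\ref{def:sup/subjet}).
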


\begin{proof}
From Corollaries \ref{cor:psi_vplus} and \ref{cor:uminus_psi}, we know
\begin{equation}\label{eq:uminus_psi_vplus}
u_{-} \le \psi \le v_{+}
\end{equation}
on $\R$.  Furthermore, Theorems \ref{thm:v_plus} and \ref{thm:u_minus} prove that $v_{+}$ and $u_{-}$ are viscosity sub- and supersolutions, respectively.  Thus, Theorem \ref{thm:comp} implies that $v_{+} \le u_{-}$, which, when combined with \eqref{eq:uminus_psi_vplus} implies that
\[
u_{-} = \psi = v_{+}
\]
on $\R$.  Thus, we have proved this theorem.
\end{proof}


\begin{thebibliography}{99}


\setlength{\itemsep}{0.0mm}

\bibitem{BCZ13} Bai, Lihua, Jun Cai, and Ming Zhou (2013). Optimal reinsurance policies for an insurer with a bivariate reserve risk process in a dynamic setting. {\it Insurance: Mathematics and Economics}, 53(3): 664-670.

\bibitem{BC-D1997} Bardi, Martino and Italo Capuzzo-Dolcetta (1997). {\it Optimal Control and Viscosity Solutions of Hamilton-Jacobi-Bellman Equations}. New York: Springer Science \& Business Media.

\bibitem{BI2008} Barles, Guy and Cyril Imbert (2008). Second-order elliptic integro-differential equations: viscosity solutions' theory revisited. {\it Annales de l'IHP Analyse non lin{\'e}aire}, 25(3): 567-585.

\bibitem{BN2004}  Ba\"uerle, Nicole (2004). Approximation of optimal reinsurance and dividend pay-out policies. {\it Mathematical Finance}, 14(1): 99-113.

\bibitem{BS2012} Bayraktar, Erhan and Mihai S\^irbu (2012). Stochastic Perron's method and verification without smoothness using viscosity comparison: the linear case. {\it Proceedings of the American Mathematical Society}, 140(10): 3645-3654.

\bibitem{BS2013} Bayraktar, Erhan and Mihai S\^irbu (2013). Stochastic Perron's method for Hamilton-Jacobi-Bellman equations. {\it SIAM Journal on Control and Optimization}, 51(6): 4274-4294.

\bibitem{C86} Centeno, Maria de Lourdes (1986). Measuring the effects of reinsurance by the adjustment coefficient. {\it Insurance: Mathematics and Economics}, 5(2): 169-182.

\bibitem{CS91} Centeno, Maria de Lourdes and Onofre Sim\~{o}es (1991).  Combining quota-share and excess of loss treaties on the reinsurance of $n$ independent risks. {\it ASTIN Bulletin}, 21(1): 41-55.

\bibitem{CYZ18} Chen, Shumin, Hailiang Yang, and Yan Zeng (2018). Stochastic differential games between two insurers with generalized mean-variance premium principle. {\it ASTIN Bulletin}, 48(1): 413-434.

\bibitem{CY2019} Cohen, Asaf and Virginia R. Young (2019). Asymptotic approximation of the probability of ruin for large values of the Poisson rate. Working paper, Department of Mathematics, University of Michigan.

\bibitem{DG1991} Dufresne, Francois and Hans U. Gerber (1991). Risk theory for the compound Poisson process that is perturbed by diffusion. {\it Insurance: Mathematics and Economics}, 10(1): 51-59.

\bibitem{GSS2008} Gerber, Hans U., Elias S. W. Shiu, and Nathaniel Smith (2008). Methods for estimating the optimal dividend barrier and the probability of ruin. {\it Insurance: Mathematics and Economics}, 42(1): 243-254.

\bibitem{G91} Grandell, Jan (1991). {\it Aspects of Risk Theory}. Springer, New York.

\bibitem{GC04} Guerra, Manuel and Maria de Lourdes Centeno (2008). Optimal reinsurance policy: the adjustment coefficient and the expected utility criteria. {\it Insurance: Mathematics and Economics}, 42(2): 529-539.

\bibitem{HS04} Hald, Morten and Hanspeter Schmidli (2004).  On the maximization of the adjustment coefficient under proportional reinsurance. {\it ASTIN Bulletin}, 34(1): 75-83.

\bibitem{HLY19} Han, Xia, Zhibin Liang, and Virginia R. Young (2019). Optimal reinsurance to minimize the probability of drawdown under the mean-variance premium principle. Working paper, Department of Mathematics, University of Michigan.

\bibitem{HLY18} Han, Xia,  Zhibin Liang, and  Kam Chuen Yuen (2018). Optimal proportional reinsurance to minimize the probability of drawdown under thinning-dependence structure. {\it Scandinavian Actuarial Journal}, 2018(10): 863-889.	

\bibitem{HLZ2019} Han, Xia, Zhibin Liang, and Caibin Zhang (2019). Optimal proportional reinsurance with common shock dependence to minimise the probability of drawdown. {\it Annals of Actuarial Science}, 13(2): 268-294.

\bibitem{HT10} Hipp, Christian and Michael Taksar (2010). Optimal non-proportional reinsurance. {\it Insurance: Mathematics and Economics}, 47(2): 246-254.

\bibitem{I1969}  Iglehart, Donald L. (1969). Diffusion approximaions in collective risk theory. {\it Journal of Applied Probability}, 6(2): 285-292.

\bibitem{LY18} Liang, Xiaoqing and Virginia R. Young (2018). Minimizing the probability of ruin: optimal per-loss reinsurance, {\it Insurance: Mathematics and Economics}, 82: 181-190.

\bibitem{LY2019} Liang, Xiaoqing and Virginia R. Young (2019). Minimizing the discounted probability of exponential Parisian ruin via reinsurance. Working paper, Department of Mathematics, University of Michigan.

\bibitem{LG07} Liang, Zhibin and Junyi Guo (2007). Optimal proportional reinsurance and ruin probability. {\it Stochastic Models}, 23(2): 333-350.

\bibitem{LG08} Liang, Zhibin and Junyi Guo (2008). Upper bounds for ruin probabilities under optimal investment and proportional reinsurance. {\it Applied Stochastic Models in Business and Industry}, 24(2): 109-128.

\bibitem{LY16} Liang, Zhibin and Kam Chuen Yuen (2016). Optimal dynamic reinsurance with dependent risks: variance premium principle. {\it Scandinavian Actuarial Journal}, 2016(1): 18-36.

\bibitem{PS85} Pestien, Victor C. and William D. Sudderth (1985). Continuous-time red and black: how to control a diffusion to a goal. {\it Mathematics of Operations Research}, 8(2): 11-31.

\bibitem{P2005} Protter, Philip (2005). {\it Stochastic Integration and Differential Equations}. New York: Springer-Verlag.

\bibitem{S2008} Schmidli, Hanspeter (2008). {\it Stochastic Control in Insurance}. London: Springer-Verlag.

\bibitem{WLY18} Wei, Wei, Zhibin Liang, and Kam Chuen Yuen (2018). Optimal reinsurance in a compound Poisson risk model with dependence. {\it Journal of Applied Mathematics and Computing}, 58(1-2): 389-412.

\bibitem{ZMZ16} Zhang, Xin, Hui Meng, and Yan Zeng. (2016). Optimal investment and reinsurance strategies for insurers with generalized mean-variance premium principle and no-short selling. {\it Insurance: Mathematics and Economics}, 67: 125-132.

\bibitem{ZZG07} Zhang, Xin, Ming Zhou, and Junyi Guo (2007). Optimal combinational quota-share and excess-of-loss reinsurance policies in a dynamic setting. {\it Applied Stochastic Models in Business and Industry}, 23(1): 63-71.

\bibitem{ZL16} Zhang, Xuepeng and Zhibin Liang (2016). Optimal layer reinsurance on the maximization of the adjustment coefficient. {\it Numerical Algebra, Control and Optimization}, 6(1): 21-34.

\bibitem{ZC2014} Zhou, Ming and Jun Cai (2014). Optimal dynamic risk control for insurers with state-dependent income. {\it Journal of Applied Probability}, 51(2): 417-435.

\bibitem{ZY12} Zhou, Ming and Kam Chuen Yuen (2012). Optimal reinsurance and dividend for a diffusion model with capital injection: variance premium principle. {\it Economic Modelling}, 29(2): 198-207.


\end{thebibliography}
\end{document}